\renewcommand{\mathbb}{\mathbf}
\crefname{equation}{}{} 
\newcolumntype{C}{>{$}c<{$}} 
\newcounter{rownumber}[table]
\renewcommand{\therownumber}{(\roman{rownumber})}
\newcolumntype{N}{>{\refstepcounter{rownumber}\therownumber}c}
\theoremstyle{plain}
\newtheorem{theorem}[equation]{Theorem}
\newtheorem{proposition}[equation]{Proposition}
\newtheorem{lemma}[equation]{Lemma}
\newtheorem{corollary}[equation]{Corollary}
\theoremstyle{definition}
\newtheorem{remark}[equation]{Remark}
\newtheorem{definition}[equation]{Definition}
\numberwithin{equation}{section}
\numberwithin{figure}{subsection}
  \AtBeginEnvironment{\theoremenv}{%
    \setlist[enumerate]{label={(\roman*)}}
  }
\newif\iffinalrun
  \newcommand{\need}[1]{}
  \newcommand{\mar}[1]{}
  \newcommand{\need}[1]{{\tiny *** #1}}
  \newcommand{\mar}[1]{\marginpar{\raggedright\tiny fixme #1}}
\newcommand{\F}{\mathbb{F}}
\newcommand{\Q}{\mathbb{Q}}
\newcommand{\Z}{\mathbb{Z}}
\newcommand{\Fbar}{\overline{\F}}
\newcommand{\Qbar}{\overline{\Q}}
\newcommand{\Fpbar}{\Fbar_p}
\newcommand{\red}{\operatorname{red}}
\newcommand{\Qp}{\Q_p}
\newcommand{\Qpbar}{\Qbar_p}
\newcommand{\ts}{\sigma_{\mathbf{t}, \mathbf{s}}}
\newcommand{\tss}{\sigma_{\mathbf{t}', \mathbf{s}'}}
\newcommand{\tsss}{\sigma_{\mathbf{t}'', \mathbf{s}''}}
\newcommand{\Gm}{\mathbb{G}_m}
\DeclareMathOperator{\Ext}{Ext}
\DeclareMathOperator{\Gal}{Gal}
\DeclareMathOperator{\GL}{GL}
\DeclareMathOperator{\Hom}{Hom}
\DeclareMathOperator{\Ind}{Ind}
\DeclareMathOperator{\Spec}{Spec}
\DeclareMathOperator{\Sym}{Sym}
\def\eqref #1{(\ref{#1})}
\newcommand{\cA}{{\mathcal A}}
\newcommand{\cE}{{\mathcal E}}
\newcommand{\cF}{{\mathcal F}}
\newcommand{\cI}{{\mathcal I}}
\newcommand{\cK}{{\mathcal K}}
\newcommand{\cO}{{\mathcal O}}
\newcommand{\cS}{{\mathcal S}}
\newcommand{\cX}{{\mathcal X}}
\newcommand{\cY}{{\mathcal Y}}
\newcommand{\cZ}{{\mathcal Z}}
\newcommand{\rhobar}{\overline{\rho}}
\begin{document}
\selectlanguage{english}
\title[Intersections of components of Emerton-Gee stack for $\GL_2$]{Codimension one intersections between components of the Emerton-Gee stack for $\GL_2$}
 \author{Kalyani Kansal}
 \address{Imperial College London, London SW7 2AZ, UK}
 \email{kalyani.kansal@gmail.com}
\keywords{Number theory, Langlands}

\subjclass[MSC Classification]{11S99}

\begin{abstract} Let $p$ be a fixed odd prime, and let $K$ be a finite extension of $\mathbb{Q}_p$ with ring of integers $\mathcal{O}_K$. The Emerton-Gee stack for $\GL_2$ is a stack of $(\varphi, \Gamma)$-modules. The stack, denoted $\mathcal{X}_2$, can be interpreted as a moduli stack of representations of the absolute Galois group of $K$ with $p$-adic coefficients. The reduced part of the Emerton-Gee stack, denoted $\mathcal{X}_{2, \red}$, is an algebraic stack defined over a finite field of characteristic $p$ and can be viewed as a moduli stack of Galois representations with mod $p$ coefficients. The irreducible components of $\mathcal{X}_{2, \red}$ are labelled in a natural way by Serre weights, which are the irreducible mod $p$ representations of $\GL_2(\mathcal{O}_K)$. Each irreducible component of $\mathcal{X}_{2, \red}$ has dimension $[K:\mathbb{Q}_p]$.

In this article, we compute $\GL_2(\mathcal{O}_K)$-extensions of pairs of Serre weights and, motivated by the conjectural categorical $p$-adic Langlands programme, 
we show that a non-trivial extension of a pair of non-isomorphic Serre weights implies a codimension $1$ intersection of the corresponding irreducible components. The converse of this statement is true if the Serre weights are chosen to be sufficiently generic. Furthermore, we show that the number of top-dimensional components in a codimension $1$ intersection is related to the depth of the extensions of the corresponding Serre weights. \end{abstract}

\maketitle
\tableofcontents

\section{Introduction}

Let $p$ be a fixed, odd prime and let $K/\mathbb{Q}_p$ be a finite extension, with ring of integers $\mathcal{O}_K$, residue field $k$ and absolute Galois group $G_K$. In \cite{emerton2019moduli}, Emerton and Gee constructed and studied the stack of rank $d$ \'etale $(\varphi, \Gamma)$-modules defined over the formal spectrum of a ring of integers $\mathcal{O}$ in a finite extension of $\mathbb{Q}_p$. The stack is denoted $\mathcal{X}_{d}$. Over Artinian coefficients, there exists an equivalence of categories between rank $d$ \'etale $(\varphi, \Gamma)$-modules and $d$-dimensional $G_K$-representations that allows one to view $\mathcal{X}_{d}$ as a moduli stack of Galois representations. 

We now recall the following theorem by Emerton and Gee on the geometry of the reduced part of $\mathcal{X}_{d}$, which in the case $d=2$ will provide the setting for this article.

\begin{theorem}{\cite[Thm.~1.2.1]{emerton2019moduli}}
The reduced part of $\mathcal{X}_{d}$, denoted by $\mathcal{X}_{d, \red}$, is an algebraic stack of finite type over $\mathbb{F}_p$. The irreducible components of $\mathcal{X}_{d, \red}$ admit a natural labeling by Serre weights. Each irreducible component of $\mathcal{X}_{d, \red}$ has dimension $[K:\mathbb{Q}_p]d(d-1)/2$.
\end{theorem}

Here, the Serre weights are the irreducible mod $p$ representations of $\mathrm{GL}_d(k)$, and by inflation of $\mathrm{GL}_d(\mathcal{O}_K)$.
The present work is an attempt towards relating the representation theory of $\mathrm{GL}_2(\mathcal{O}_K)$ in terms of extensions of Serre weights with the structure of the intersections of the irreducible components of $\mathcal{X}_{2, \red}$. Our investigation is motivated by a conjectural categorical $p$-adic Langlands correspondence, as we explain below.

\subsection{Categorical \texorpdfstring{$p$}{p}-adic Langlands}
The Emerton-Gee stack $\mathcal{X}_{d}$  is expected to play a central role in the $p$-adic Langlands program, occupying the position played by the moduli stack of $L$-parameters in the work of Fargues-Scholze on the classical Langlands correspondence. More precisely, \cite[Conj.~6.1.6]{egh-ihes} proposes the existence of an exact and fully faithful functor
$$\mathfrak{A}: D^{b}_{f.p.}(\text{sm.} \mathrm{GL}_2(K)) \to D^{b}_{\text{coh}}(\mathcal{X}_d),$$
satisfying a number of properties related to duality and support, and witnessing the $p$-adic Langlands correspondence. The domain $D^{b}_{f.p.}(\text{sm.} \mathrm{GL}_2(K))$ is a derived category of smooth representations of $\mathrm{GL}_d(K)$ while the codomain $D^{b}_{\text{coh}}(\mathcal{X}_d)$ is a derived category of quasicoherent sheaves on $\mathcal{X}_d$. We omit the details of how these categories are precisely defined and refer the interested reader to \cite[Ch.~6]{egh-ihes} instead. 

Specializing to $d=2$, the conditions relating to duality and support that the functor $\mathfrak{A}$ is expected to satisfy imply the following: If $\sigma$ is a Serre weight that is non-Steinberg (a notion that will be defined in \Cref{notation}), the support of $\mathfrak{A}(\text{c-Ind}_{\mathrm{GL}_2(\mathcal{O}_K)}^{\mathrm{GL}_2(K)}\sigma)$ is the irreducible component of $\mathcal{X}_{2, \red}$ labeled by $\sigma$, which we denote by $\mathcal{X}_{2, \red}^{\sigma}$. This follows from \cite[Conj.~6.1.34]{egh-ihes} and \cite[Thm.~1.2]{caraiani2022geometric}.
Now, let $\sigma$ and $\tau$ be two non-isomorphic non-Steinberg Serre weights, and let $$0 \to \sigma \to V \to \tau \to 0$$
be a short exact sequence of $\mathrm{GL}_2(\mathcal{O}_K)$-modules inducing the short exact sequence $$0 \to \text{c-Ind}_{\mathrm{GL}_2(\mathcal{O}_K)}^{\mathrm{GL}_2(K)} \sigma \to \text{c-Ind}_{\mathrm{GL}_2(\mathcal{O}_K)}^{\mathrm{GL}_2(K)} V \to \text{c-Ind}_{\mathrm{GL}_2(\mathcal{O}_K)}^{\mathrm{GL}_2(K)}\tau \to 0$$ of $\mathrm{GL}_2(K)$-modules. Since $\mathfrak{A}$ is (conjecturally) an exact and fully faithful functor, the extension class of $\text{c-Ind}_{\mathrm{GL}_2(\mathcal{O}_K)}^{\mathrm{GL}_2(K)} V$ will be witnessed precisely on the intersection of support of $\mathfrak{A} \circ \text{c-Ind}_{\mathrm{GL}_2(\mathcal{O}_K)}^{\mathrm{GL}_2(K)}\sigma$ and of $\mathfrak{A} \circ \text{c-Ind}_{\mathrm{GL}_2(\mathcal{O}_K)}^{\mathrm{GL}_2(K)}\tau$, that is on $\mathcal{X}_{2, \red}^{\sigma} \cap \mathcal{X}_{2, \red}^{\tau}$. 

Thus, away from $\mathcal{X}_{2, \red}^{\sigma} \cap \mathcal{X}_{2, \red}^{\tau}$, the sheaf coming from $V$ will be isomorphic to the direct sum of sheaves coming from $\sigma$ and $\tau$. In particular, if $\mathcal{X}_{2, \red}^{\sigma} \cap \mathcal{X}_{2, \red}^{\tau}$ is empty, we will have the following diagram of $\mathrm{GL}_2(\mathcal{O}_K)$-representations where the right downward arrow splits:

\[
  \begin{tikzcd}
  \sigma \arrow{r}{} \arrow[swap]{d}{} & \text{c-Ind}_{\mathrm{GL}_2(\mathcal{O}_K)}^{\mathrm{GL}_2(K)}\sigma \arrow{d}{}\\
   V \arrow{r}{} &\text{c-Ind}_{\mathrm{GL}_2(\mathcal{O}_K)}^{\mathrm{GL}_2(K)}V \arrow[dashed, bend right]{u}
   \end{tikzcd}
\]

The horizontal arrows split as maps of $\mathrm{GL}_2(\mathcal{O}_K)$-representations by Mackey's decomposition theorem. The left vertical arrow must then split as well, and $V$ must be isomorphic to $\sigma \oplus \tau$. Thus, if the conjectured functor $\mathfrak{A}$ exists, then 
$$\mathcal{X}_{2, \red}^{\sigma} \cap \mathcal{X}_{2, \red}^{\tau} = \varnothing \implies \Ext^{1}_{\Fbar[\GL_2(\cO_K)]}(\sigma, \tau) = 0.$$
This analysis only depends on the conjectured support  of the sheaves coming from $\sigma$ and $\tau$, and no other properties. As we will see, finer structural details of the intersections of components could be correlated to extensions of the corresponding Serre weights.

\subsection{Main results}

In this article, we specifically study codimension $1$ intersections of irreducible components of $\mathcal{X}_{2, \red}$ in conjunction with extensions of Serre weights. Our first main result is the following (see  \Cref{Ext-group-intersection}):

\begin{theorem}\label{high-level-summary}

If $\sigma$ and $\tau$ are a pair of non-isomorphic Serre weights, then 
$$\mathrm{Ext}^1_{\Fbar[\mathrm{GL}_2(\mathcal{O}_K)]}(\sigma, \tau) \neq 0 \implies \mathrm{dim} \; \mathcal{X}_{2, \red}^{\sigma} \cap \mathcal{X}_{2, \red}^{\tau} = [K:\mathbb{Q}_p] - 1.$$
The converse implication is not always true. However, when both $\sigma$ and $\tau$ are weakly regular, the converse is also true.
\end{theorem}
The notion of weak regularity mentioned in the theorem above is a certain genericity condition on the Serre weights and is defined precisely in \Cref{notation}. In the course of proving \Cref{high-level-summary}, we obtain the following result on extensions of Serre weights that may be of independent interest in the representation theory of $\mathrm{GL}_2(\mathcal{O}_K)$ (see \Cref{ext-GL_2(O_K)-unramified-generic} and \Cref{GL_2(O_K)-ramified-splits}).

\begin{theorem}\label{high-level-summary-3}
If $K/\mathbb{Q}_p$ is unramified and $\sigma$ and $\tau$ are a pair of non-isomorphic weakly regular Serre weights, then the natural map $$\mathrm{Ext}^1_{\Fbar[\mathrm{GL_2}(k)]}(\sigma, \tau) \hookrightarrow \mathrm{Ext}^1_{\Fbar[\mathrm{GL_2}(\mathcal{O}_K)]}(\sigma, \tau)$$ is an isomorphism.

If $K/\mathbb{Q}_p$ is ramified, then for any pair of non-isomorphic Serre weights $\sigma$ and $\tau$, there exists an exact sequence
$$0 \to \mathrm{Ext}^1_{\Fbar[\mathrm{GL_2}(k)]}(\sigma, \tau) \to \mathrm{Ext}^1_{\Fbar[\mathrm{GL_2}(\mathcal{O}_K)]}(\sigma, \tau) \to \Hom_{\Fbar[\GL_2(k)]}(\sigma, H^1(\cK_1, \tau)) \to 0$$
where $\cK_1$ is the kernel of the natural quotient map $\GL_2(\cO_K) \to \GL_2(k)$.
\end{theorem}

The results of \Cref{ext} show that when $p \neq 5$ and $\sigma$ and $\tau$ are non-isomorphic Serre weights with the space $\mathrm{Ext}^1_{\Fbar[\mathrm{GL}_2(\mathcal{O}_K)]}(\sigma, \tau)$ non-zero, then $\mathrm{Ext}^1_{\Fbar[\mathrm{GL}_2(\mathcal{O}_K)]}(\sigma, \tau)$ is $1$-dimensional. A generator of this space can either arise as a $\GL_2(k)$-extension or at a deeper level as an element of $\Hom_{\GL_2(k)}(\sigma, H^1(\cK_1, \tau))$. The depth at which the extensions arise curiously relates to the number of top dimensional components in $\cX_{2, \red}^{\sigma} \cap \cX_{2, \red}^{\tau}$. By \Cref{high-level-summary-3} above, generically, the differences in depth are relevant only in the ramified setting. This and other differences in the structure of intersections in the unramified and ramified settings are summarized below (see \Cref{thm-number}).
 
\begin{theorem}\label{high-level-summary-2}
Let $\sigma$ and $\tau$ be two weakly regular, non-isomorphic Serre weights  such that $\mathcal{X}_{2, \red}^{\sigma} \cap \mathcal{X}_{2, \red}^{\tau}$ is of codimension $1$. Then the following are true:

\begin{enumerate}
    \item When $K$ is unramified over $\mathbb{Q}_p$, the number of components of dimension $[K:\mathbb{Q}_p] - 1$ in $\mathcal{X}_{2, \red}^{\sigma} \cap \mathcal{X}_{2, \red}^{\tau}$ is $1$. When $K$ is ramified over $\mathbb{Q}_p$, this number is $2$ if the $\mathrm{GL}_2(k)$-extensions of $\tau$ by $\sigma$ are non-trivial, and $1$ otherwise.
    \item When $K$ is unramified over $\mathbb{Q}_p$, a component of dimension $[K:\mathbb{Q}_p] - 1$ in $\mathcal{X}_{2, \red}^{\sigma} \cap \mathcal{X}_{2, \red}^{\tau}$ is not contained in $\mathcal{X}_{2, \red}^{\sigma'}$ for any non-Steinberg $\sigma'$ not isomorphic to $\sigma$ or $\tau$. In the ramified case, for sufficiently generic $\sigma$ and $\tau$ (c.f. \Cref{thm-number}), a component of dimension $[K:\mathbb{Q}_p] - 1$ in $\mathcal{X}_{2, \red}^{\sigma} \cap \mathcal{X}_{2, \red}^{\tau}$ is contained in 
    $\cX_{2, \red}^{\sigma'}$ for some non-Steinberg $\sigma'$ not isomorphic to $\sigma$ or $\tau$.
\end{enumerate}
\end{theorem}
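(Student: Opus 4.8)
The plan is to reduce both assertions to an explicit analysis of the families of reducible extensions whose closures cut out the top-dimensional components of $\mathcal{Z}_\sigma \cap \mathcal{Z}_\tau$. By the computation underlying \Cref{Ext-group-intersection}, the generic representation in a component of $\mathcal{Z}_\sigma \cap \mathcal{Z}_\tau$ of maximal dimension $[K:\mathbb{Q}_p]-1$ may be taken to be a non-split extension $0 \to \chi_1 \to V \to \chi_2 \to 0$ of $\mathrm{G}_K$-characters, varying in unramified twists, whose extension class is constrained to a codimension-one \emph{peu ramifi\'ee} locus inside $\mathrm{Ext}^1_{\mathrm{G}_K}(\chi_2,\chi_1)$; the constraint is exactly what forces the second Serre weight $\tau$ to appear alongside $\sigma$. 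Each such family has dimension $([K:\mathbb{Q}_p]-1)+2-1-1=[K:\mathbb{Q}_p]-1$ once one accounts for the two unramified twists and the two $\mathbb{G}_m$-symmetries, so the problem becomes the combinatorial one of enumerating, without redundancy, the distinct families of this shape and of reading off the full Serre-weight set of their generic members.

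For part (i), I would enumerate the admissible data $(\chi_1,\chi_2,\text{direction})$ — the pair of constituent characters together with the direction along which the peu ramifi\'ee condition is imposed — that are simultaneously compatible with degenerating the $\sigma$-family and with acquiring $\tau$. The key structural input is the decomposition of $\mathrm{Ext}^1_{\mathrm{G}_K}(\chi_2,\chi_1)$ according to the $f$ embeddings $k \hookrightarrow \overline{\mathbb{F}}$, each contributing a graded piece of dimension $e$, for a total of $ef=[K:\mathbb{Q}_p]$. When $e=1$ there is a unique admissible direction, giving a single component. When $e>1$ the enlarged extension space admits a second admissible family, and I would show that it attains the maximal dimension precisely when $\mathrm{Ext}^1_{\overline{\mathbb{F}}[\mathrm{GL}_2(k)]}(\tau,\sigma)\neq 0$, while otherwise the two families coincide or the extra one drops in dimension; this recovers the count of $2$ versus $1$. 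The bridge between the Galois-side count and the group-theoretic extension is the identification of the non-triviality of the $\mathrm{GL}_2(k)$-extension with a genuine adjacency of $\sigma$ and $\tau$ that splits the degeneration into two directions.

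For part (ii), I would compute the complete Serre-weight set of the generic representation in each maximal component $Z$ by applying the weight recipe of \Cref{notation} to the constrained extension. Since $\mathcal{Z}_{\tau'}$ is closed, $Z$ lies in a triple intersection if and only if this generic set contains a weight $\tau'$ distinct from $\sigma$ and $\tau$. In the unramified case I would verify that the single peu ramifi\'ee condition adds only $\tau$, so the generic set is exactly $\{\sigma,\tau\}$ and no third component contains $Z$. In the ramified case, under the genericity hypothesis of \Cref{thm-number}, I would show that the degeneration unavoidably imposes an auxiliary relation producing a third weight $\tau'$, so that each maximal component sits in a triple intersection; the genericity is needed precisely to guarantee that $\tau'$ exists and is distinct from $\sigma$ and $\tau$, ruling out small-weight coincidences.

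The main obstacle I expect is the precise bookkeeping that translates ramification conditions on extension classes into Serre-weight membership — tracking how imposing the peu ramifi\'ee condition along a given embedding alters the tame-inertia data and hence the weight set — together with ensuring that the enumeration of families is both exhaustive and free of double counting once the $\mathbb{G}_m$-symmetries and the option of swapping the sub- and quotient-characters are taken into account. The ramified case is the delicate one: the larger space $\mathrm{Ext}^1_{\mathrm{G}_K}(\chi_2,\chi_1)$ carries more directions, and I would need to argue carefully both that exactly the claimed number of them yield maximal, pairwise-distinct components and that the forced third weight in part (ii) is genuinely present and new.
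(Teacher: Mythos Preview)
Your high-level strategy matches the paper's: reduce to counting ``separated families'' of extensions $\chi_1 \to V \to \chi_2$ (this is precisely \Cref{geometry-proposition-dimension-intersection}), then enumerate them and read off the Serre weights of the generic member. Where your proposal stays vague, however, is exactly where the paper does the work, and the missing structure is not cosmetic.

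The paper does not enumerate ``directions in $\mathrm{Ext}^1$'' directly. Instead it observes that any family of dimension $ef-1$ must have one of two shapes: either $|J^{AH}_{\sigma}(\chi_1,\chi_2)|=ef$ and $|J^{AH}_{\tau}(\chi_1,\chi_2)|=ef-1$ (a \emph{Type I} intersection), or both equal $ef-1$ with the same index set (a \emph{Type II} intersection). In Type I one of $\sigma,\tau$ is forced to be the \emph{highest weight} attached to $(\chi_1,\chi_2)$, which immediately bounds the number of separated families by $2$ and reduces the question to whether the relation between $\sigma$ and $\tau$ is symmetric. A Type II intersection, by contrast, is \emph{automatically} a triple intersection (the highest weight furnishes the third component), so part (ii) is not a separate Serre-weight computation but falls out of the same dichotomy. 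Your plan to ``compute the complete Serre-weight set of the generic representation'' would work in principle, but it duplicates effort and obscures why triple intersections in the unramified weakly regular case are impossible: the paper shows there are simply no Type II intersections among such weights (\Cref{proposition-unram-type-II} forces at least two of the three weights to have some $s_i=p-1$).

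Two specific points where your heuristics would not survive as stated: first, ``when $e=1$ there is a unique admissible direction'' is not how the count goes---there are in general several $(\chi_1,\chi_2)$ to consider, and uniqueness comes from the asymmetry of the explicit relation between $\vec s$ and $\vec s'$ in \Cref{type-I-regular}, which requires the full case analysis of \Cref{proposition-unram-type-1}. Second, for $e>1$ the second family does not arise from ``a second admissible direction in the enlarged extension space'' for the same $(\chi_1,\chi_2)$; it arises from a genuinely different pair $(\chi_1,\chi_2)$ (via \Cref{ramified-codim-1}\ref{ramified-codim-1-b} rather than \ref{ramified-codim-1-a}), and the $\mathrm{GL}_2(k)$-extension criterion enters because it coincides with the condition under which this second pair also realizes a Type II configuration (\Cref{type-II-regular}, \Cref{summary-totally-ramified-type-2}).
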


We emphasize that \Cref{high-level-summary-2} requires the ``weakly regular" hypothesis, and one can find counterexamples to each of the statements above in the non-generic case.

\subsection{Strategy}

Our strategy involves two separate investigations: the first determines the $\mathrm{GL}_2(\mathcal{O}_K)$-extensions of Serre weights yielding \Cref{high-level-summary-3} and the second determines by hand the pairs of Serre weights for which the corresponding irreducible components intersect in codimension $1$. In the end, we compare the results of the two investigations obtaining \Cref{high-level-summary}. A close examination of the representations that contribute to a codimension $1$ intersection of irreducible components allows us to further conclude \Cref{high-level-summary-2}. We now explain approximately the ideas that go into the two investigations.

In order to compute $\mathrm{GL}_2(\mathcal{O}_K)$-extensions of a pair of Serre weights $\sigma$ and $\tau$, we utilize a Grothendieck spectral sequence to construct a left exact sequence
$$0 \to \mathrm{Ext}^1_{\Fbar[\mathrm{GL_2}(k)]}(\sigma, \tau) \to \mathrm{Ext}^1_{\Fbar[\mathrm{GL_2(\mathcal{O}_K)}]}(\sigma, \tau) \to \Hom_{\Fbar[\GL_2(k)]}(\sigma, H^1(\cK_1, \tau)).$$

Explicit descriptions of the two terms flanking the group of $\mathrm{GL}_2(\mathcal{O}_K)$-extensions are available in \cite{Breuil-Paskunas}. The difficulty is in computing the image of the map $$\mathrm{Ext}^1_{\Fbar[\mathrm{GL_2(\mathcal{O}_K)}]}(\sigma, \tau) \to \Hom_{\Fbar[\GL_2(k)]}(\sigma, H^1(\cK_1, \tau)).$$
We resolve this (in most cases) by writing down cocycles of $\cK_1$ and attempting to extend them by hand to $\GL_2(\cO_K)$.


The starting point of the second investigation comes from the following key theorem {\cite[Thm.~1.2]{caraiani2022geometric} by Caraiani, Emerton, Gee and Savitt describing the finite type points of the irreducible components of $\cX_{2, \red}$. Recall that to each $\rhobar\colon G_K \to \GL_2(\Fpbar)$ one can associate a set $W(\rhobar)$ of Serre weights (see \Cref{notation} for details).

\begin{theorem}{\cite[Thm.~1.2]{caraiani2022geometric}}
    Let $\sigma$ be a non-Steinberg Serre weight. The $\Fbar_p$-points of $\mathcal{X}_{2, \red}^{\sigma}$ are precisely the representations $\rhobar\colon G_K \to \GL_2(\Fbar_p)$ satisfying $\sigma \in W(\rhobar)$.
\end{theorem}

Fixing non-Steinberg non-isomorphic Serre weights $\sigma$ and $\tau$, we proceed by looking for families of representations that have both $\sigma$ and $\tau$ as Serre weights, and therefore give points of $\mathcal{X}_{2, \red}^{\sigma} \cap \mathcal{X}_{2, \red}^{\tau}$. The sizes of these families can then be used to determine the dimension of $\mathcal{X}_{2, \red}^{\sigma} \cap \mathcal{X}_{2, \red}^{\tau}$. As employed in \cite{emerton2019moduli}, a source of families of representations is provided by extensions of fixed $G_K$-characters together with extensions of their unramified twists. Every irreducible component of $\mathcal{X}_{2, \red}$ can be obtained as the closure of such a family. Vector spaces of extensions of fixed $G_K$-characters are typically $[K:\mathbb{Q}_p]$-dimensional. Allowing various unramified twists of the fixed characters adds $2$ to the dimension, while one dimension is taken away because a $\Gm$-orbit of an extension class gives the same representation and yet another dimension is taken away because of a $\Gm$-worth of endomorphisms of each extension. Thus a codimension $1$ intersection of $\mathcal{X}_{2, \red}^{\sigma}$ and $\mathcal{X}_{2, \red}^{\tau}$ may be expected to correspond to the existence of a codimension $1$ family of extensions of fixed $G_K$-characters (as well as their unramified twists) with both $\sigma$ and $\tau$ as their Serre weights. We use detailed descriptions in \cite{ddr,steinmetz} of Serre weights associated to extensions of $G_K$-characters to explicitly compute such families.

\subsection{Outline of the paper}
In \Cref{ext}, we compute the group of extensions of Serre weights as $\GL_2(\mathcal{O}_K)$-representations. In \Cref{geometry}, we relate the dimensions of families of $G_K$-representations with both $\sigma$ and $\tau$ as Serre weights to the dimension of $\mathcal{X}_{2, \red}^{\sigma} \cap \mathcal{X}_{2, \red}^{\tau}$. We also relate the number of sufficiently large families to the number of components of maximal dimension inside $\mathcal{X}_{2, \red}^{\sigma} \cap \mathcal{X}_{2, \red}^{\tau}$. The objective of \Cref{serre-weights} is to recall explicit criteria for computations of Serre weights of representations as described in \cite{ddr,steinmetz}. Along with the results of \Cref{geometry}, these criteria are used to relate the existence of a codimension $1$ intersection between $\cX_{2, \red}^{\sigma}$ and $\cX_{2, \red}^{\tau}$ to a requirement that $\sigma$ and $\tau$ satisfy a precise numerical relationship. This numerical relationship is seen to manifest in two distinct types: type I and type II, defined in \Cref{geometry-to-ext}. In sections \Cref{type1,type2}, we compute all the pairs $\sigma$ and $\tau$ that satisfy the aforementioned numerical relationship of the two types. Finally, \Cref{conclusion} collects all the findings.

\subsection{Notation}\label{notation}
	Let $p>2$ be a fixed prime and let $K$ be a finite extension of $\mathbb{Q}_p$ with valuation ring $\mathcal{O}_K$, residue field $k$ and uniformizer $\pi$. 	The requirement on $p$ is to allow the key input of \cite[Cor.~7.2]{caraiani2022geometric}.
	
	We let $f := f(K/\mathbb{Q}_p)$ and $e := e(K/\mathbb{Q}_p)$. Let $G_K$ be the absolute Galois group of $K$, and $I_K$ the inertia group. Let $E$ be a finite extension of $\Qp$ in a fixed algebraic closure $\Qpbar$ of $\Qp$ so that all embeddings of $K$ into $\Qpbar$ are contained in $E$. Let $\cO$ be the ring of integers of $E$ with residue field $\F$. Denote by $\Fbar$ the residue field of the maximal unramified extension of $E$ inside $\Qpbar$. 
    We will consider representations of the groups $\GL_2(\cO_K)$, $\GL_2(k)$, $G_K$ and $I_K$ in various settings with coefficients in algebraic extensions of $\F$. Equipping algebraic extensions of finite fields with discrete topology, all representations will be continuous. 
	
	Fix an embedding $\theta_{f-1}: k \hookrightarrow \Fbar$. Let $\theta_{f-1-i} := \theta_{f-1}^{p^i}$ for $i \in \Z$. Since $\theta_i = \theta_{i+f}$, we will view the indices $i$ of the embeddings as elements of $\Z/f\Z$. Let $\omega_i$ be the $I_K$-character given by $$\omega_i(g) =  \theta_{i}(\frac{g(\sqrt[p^f - 1]{\pi})}{\sqrt[p^f - 1]{\pi}} ).$$
	
	Let $\mathbf{t}=(t_i)_{i}$ and $\mathbf{s}=(s_i)_{i}$ be tuples of length $f$ indexed over elements of $\Z/f\Z$. We let $\sigma_{\mathbf{t}, \mathbf{s}}$ denote the irreducible $\GL_2(k)$-representation $$\bigotimes_{i\in \Z/f\Z}(\mathrm{det}^{t_i} \otimes \mathrm{Sym}^{s_i}k^{2}) \otimes_{k, \theta_{i}} \Fbar$$
	where $k^2$ is the standard two-dimensional representation of $\GL_2(k)$ and each $s_i \in [0, p-1]$. All irreducible $\GL_2(k)$-representations with coefficients in $\Fbar$ are of this form and are called Serre weights. We can uniquely identify each Serre weight by $\mathbf{s}$ and $\mathbf{t}$ if we demand that $t_i \in [0, p-1] \; \forall i$ and at least one of the $t_i$'s is not $p-1$. Following \cite{gee08}, we say $\sigma_{\mathbf{t}, \mathbf{s}}$ is \textit{weakly regular}, if each $s_i \in [0, p-2]$. We say that $\sigma_{\mathbf{t}, \mathbf{s}}$ is \textit{Steinberg} if each $s_i$ equals $p-1$; we say it is \textit{non-Steinberg} otherwise.

Normalize Hodge-Tate weights in such a way that all Hodge-Tate weights of the cyclotomic character are equal to $-1$. Consistent with the conventions in \cite{emerton2019moduli},
we say that a representation $\rhobar\colon G_K \to \GL_2(\Fbar)$ has Serre weight $\sigma_{\mathbf{t}, \mathbf{s}}$ if $\rhobar$ has a crystalline lift $\rho\colon G_K \to \GL_2(\overline{\mathbb{Q}}_p)$ that satisfies the following condition: 
For each embedding $\theta_i: k \hookrightarrow \mathbb{F}$, there is an embedding $\tilde{\sigma}_i: K \hookrightarrow E$ lifting $\theta_i$ such that the $\tilde{\sigma}_i$ labeled Hodge-Tate weights of $\rho$ are $\{t_i, s_i + t_i + 1\}$, and the remaining $(e-1)f$ pairs of Hodge-Tate weights of $\rho$ are all $\{0, 1\}$. In this situation, we say $\sigma_{\mathbf{t}, \mathbf{s}} \in W(\rhobar)$. 

Let $\mathcal{X}_{2, \red}$, or simply $\mathcal{X}$, be the reduced part of the Emerton-Gee stack for two-dimensional representations of $G_K$. It is defined over $\mathbb{F}$ and is an algebraic stack of pure dimension $[K:\Qp]$. The irreducible components of $\mathcal{X}$ are indexed by the non-Steinberg Serre weights. For a non-Steinberg Serre weight $\sigma_{\mathbf{t}, \mathbf{s}}$, we denote the corresponding irreducible component by $\mathcal{X}_{\sigma_{\mathbf{t}, \mathbf{s}}}$. 

\subsection{Acknowledgements}
This work was done as part of my PhD dissertation. I thank my advisor David Savitt for sharing the problem with me along with his insights on it, as well as providing continual guidance and support. I am also grateful to Karol Koziol and Brandon Levin for helpful conversations, and to Laurence Kluge and the anonymous referee for their careful reading, corrections of typos, and suggestions improving the paper. Editing of this work was completed with the support of National Science Foundation under Grant No. DMS-1926686.

\section{Extensions of Serre weights}\label{ext}

Denote by $\cK$ the group $\GL_2(\mathcal{O}_K)$ 
and by $\cK_n$ the subgroup $1 + \pi^n M_{2}(\mathcal{O}_K)$ for $n \in \mathbb{Z}_{>0}$. For all representations and cohomology groups in this section, the field of coefficients will be $\Fbar$ and all homomorphisms we consider will at least be $\Fbar$-homomorphisms. 
We will suppress the field of coefficients from the notation. Additionally, we will view all $\GL_2(k)$-representations as $\cK/\cK_n$- and $\cK$-representations by inflation. Our objective in this section is to compute the group of $\cK$-extensions of Serre weights.

It is well-known that if $\sigma$ and $\tau$ are $\GL_2(k)$-representations, then there exists an isomorphism 
\begin{align}\label{ext-gp-coh-finite}\Ext^{1}_{\cK/\cK_n}(\sigma, \tau) \cong H^1(\cK/\cK_n, \sigma^{\vee} \otimes \tau).\end{align}
Indeed, in the category of $\cK/\cK_n$-representations, one can take a projective resolution of $\sigma$ by free $\Fbar[\cK/\cK_n]$-modules. As a $\cK/\cK_n$-representation, $$\Hom(\Fbar[\cK/\cK_n], \tau) \cong \Ind_{1}^{\cK/\cK_n}\tau.$$ The isomorphism of \Cref{ext-gp-coh-finite} follows from the observation that induced representations constitute a class of adapted objects (in the sense of homological algebra) for the functor that sends a $\cK/\cK_n$-representation to the vector space of $\cK/\cK_n$-invariants. This is because of Shapiro's lemma and the fact that if $V$ is a $\cK/\cK_n$-representation, then $V$ is a subobject of $\Ind_{1}^{\cK/\cK_n} \mathrm{res}_{1}^{\cK/\cK_n} V$. Taking colimits, we obtain
\begin{align}\label{ext-gp-coh}\Ext^{1}_{\cK}(\sigma, \tau) \cong H^1(\cK, \sigma^{\vee} \otimes \tau).\end{align} 

Henceforth, we will freely use the identifications in \Cref{ext-gp-coh-finite,ext-gp-coh}. Next, for each $n$, we consider the left exact sequence of low degree terms obtained from the inflation-restriction spectral sequence for $\cK/\cK_n$. Passing to colimits as $n$ varies, we obtain
\begin{align}\label{inf-res-gss}
0 \to H^1(\GL_2(k), \sigma) \xrightarrow{\mathit{inf}} H^1(\cK, \sigma) \xrightarrow{\mathit{res}} H^1(\cK_1, \sigma)^{\GL_2(k)}
\end{align}
where $\mathit{inf}$ is the map obtained by pre-composing a $\GL_2(k)$-cocycle with the natural quotient $\cK \to \GL_2(k)$ and the map $\mathit{res}$ is obtained by restricting a continuous $\cK$-cocycle to $\cK_1$. The following proposition describes the group $H^1(\GL_2(k), \sigma^{\vee} \otimes \tau)$ when $\sigma$ and $\tau$ are Serre weights.


\begin{proposition}\label{ext-gamma}
	 Let $\sigma_{\mathbf{t}, \mathbf{s}}$ and $\sigma_{\mathbf{t}', \mathbf{s}'}$ be a pair of Serre weights. Then, $$\dim H^1(\GL_2(k), \sigma_{\mathbf{t}, \mathbf{s}}^{\vee} \otimes  \sigma_{\mathbf{t'}, \mathbf{s'}}) \leq 1.$$ 
  The dimension is $1$ if and only if one of the following holds:
	\begin{enumerate}
				\item \label{ext-gamma-totally-ramified} $f=1$ and either
		\begin{enumerate}[label=(\alph*)]
			\item \label{ext-gamma-totally-ramified-1} $s_0 < p-2$, $s'_0 = p - s_0 - 3$ and $t'_0 \equiv t_0 + s_0 + 1 \mod p-1$; or
			\item \label{ext-gamma-totally-ramified-2} $s_0 \not\in \{0, p-1\}$, $s'_0 = p - s_0 - 1$ and $t'_0 \equiv t_0 + s_0 \mod p-1$.
   	\end{enumerate}
		\item \label{ext-k-i} $f > 1$ and either
		\begin{enumerate}[label=(\alph*)]
			\item\label{ext-k-i-a} $\exists j \in \mathbb{Z}/f\mathbb{Z}$ such that 
   \begin{align*}
   s'_i = \begin{cases} s_i &\text{ for } i \not\in \{j-1, j\}, \\
   s_{i} - 1 &\text{ for } i = j-1, \\
   p - s_i - 2 &\text{ for } i = j.
   \end{cases}
   \end{align*} 
   and $$\sum\limits_{i \in \Z/f\Z} t'_i p^{f-1-i} \equiv \sum\limits_{i \in \Z/f\Z} t_i p^{f-1-i} + (s_j + 1) p^{f-1-j} \mod p^f - 1;$$ or
			\item \label{ext-k-i-b} $\exists j \in \mathbb{Z}/f\mathbb{Z}$ such that 
     \begin{align*}
   s'_i = \begin{cases} s_i &\text{ for } i \not\in \{j-1, j\}, \\
   s_{i} + 1 &\text{ for } i = j-1, \\
   p - s_i - 2 &\text{ for } i = j.
   \end{cases}
   \end{align*}
    and $$\sum\limits_{i \in \Z/f\Z} t'_i p^{f-1-i} \equiv \sum\limits_{i \in \Z/f\Z} t_i p^{f-1-i} - (p-s_j-1) p^{f-1-j} \mod p^f - 1.$$
		\end{enumerate}


		
	\end{enumerate}
 
\end{proposition}

\begin{proof}
This is \cite[Cor.~4.6]{Breuil-Paskunas}. \end{proof}

In the subsequent text, we will use explicit bases for Serre weights which we now describe.
Recall that $k^2$ denotes the standard two-dimensional representation of $\GL_2(k)$.
If $\{x, y\}$ is a basis of $k^2$, then a basis of $\Sym^{s_i} k^2$ is given by homogeneous monomials in $x$ and $y$ of degree $s_i$, namely $\{x^{k_i} z^{s_i - k_i}\}_{k_i}$ where the indexing set is the set of integers $k_i \in [0, s_i]$. This induces a basis of $(\mathrm{det}^{t_i} \otimes \Sym^{s_i} k^2) \otimes_{k, \theta_i} \Fbar$ given by $\{(1 \otimes x^{k_i} y^{s_i - k_i}) \otimes 1\}_{k_i}$, which we will write simply as $\{x^{k_i} y^{s_i - k_i}\}_{k_i}$. We will call $\{x^{k_i} y^{s_i - k_i}\}_{k_i}$ the basis of $(\mathrm{det}^{t_i} \otimes \Sym^{s_i} k^2) \otimes_{k, \theta_i} \Fbar$ induced from $\{x, y\}$. Putting these bases together for each $i$, we obtain a basis of $\sigma_{\mathbf{t}, \mathbf{s}}$ given by 
$$\{\otimes_{i \in \Z/f\Z} x^{k_i} y^{s_i - k_i}\}_{\mathbf{k} \in \mathfrak{K}}$$ 
where the indexing set $\mathfrak{K}$ for the basis is the set of tuples $\mathbf{k} = (k_i)_i$ with $k_i \in [0, s_i]$ for each $i \in \Z/f\Z$. We will call this the basis of $\sigma_{\mathbf{t}, \mathbf{s}}$
induced from $\{x, y\}$.

\begin{proposition}\label{computation-of-H1}
Let $\sigma$ be a $\GL_2(k)$-representation. There exists an isomorphism of $\GL_2(k)$-representations
 \begin{align}\label{H1-eqn}
     H^1({\cK}_1, \sigma) \cong \bigoplus_{i\in \Z/f\Z} \bigl(\sigma \otimes \big((\mathrm{det}^{-1} \otimes \Sym^2 k^2) \otimes_{k, \theta_i} \Fbar\bigr)\bigr) \oplus \bigoplus_{i=1}^{d} \sigma
     \end{align}
where $d$ is the dimension of $\Hom(1 + \pi \mathcal{O}_K, \Fbar)$, such that the following hold true:

\begin{enumerate}
	\item Suppose $\{u, v\}$ is a basis of $k^{2}$ and $\{u^2, uv, v^2\}$ the induced basis of $(\mathrm{det}^{-1} \otimes \Sym^2 k^2) \otimes_{k, \theta_i} \Fbar$. Then, the inclusion $$\sigma \otimes \big((\mathrm{det}^{-1} \otimes \Sym^2 k^2) \otimes_{k, \theta_i} \Fbar\bigr) \: \hookrightarrow \: H^{1}(\cK_1, \sigma) \cong \Hom(\cK_1, \sigma) $$ induced by the isomorphism in \Cref{H1-eqn} is given by  
\begin{align}\label{cocyle-to-basis-element}
&\alpha \otimes u^2 \: &\mapsto \: \kappa_i^{l} \alpha \nonumber \\
&\alpha \otimes 2uv \: &\mapsto \: \epsilon_i \alpha \nonumber \\
&\alpha \otimes v^2 \: &\mapsto \: \kappa_i^{u} \alpha
\end{align}
	for any $\alpha \in \sigma$, where $\kappa_i^l, \epsilon_i, \kappa_i^u$ are group homomorphisms $\cK_1 \to \Fbar$ given by
	\begin{align*}
	&\kappa_i^{l}\biggl(\!\begin{pmatrix} 1+ \pi a & \pi b \\
		\pi c & 1 + \pi d \end{pmatrix}\!\biggr) = \theta_i(c)\\
	&\epsilon_i \biggl(\!\begin{pmatrix} 1+ \pi a & \pi b \\
		\pi c & 1 + \pi d \end{pmatrix}\!\biggr)  = \theta_i(a-d)\\
	&\kappa_i^{u}\biggl(\!\begin{pmatrix} 1+ \pi a & \pi b \\
		\pi c & 1 + \pi d \end{pmatrix}\!\biggr) =\theta_i(b)
\end{align*}

\item The inclusion $\bigoplus_{i=1}^{d} \sigma \hookrightarrow H^1({\cK}_1, \sigma)$ induced by \Cref{H1-eqn} corresponds to group homomorphisms $\cK_1 \to \sigma$ that factor through the determinant and are not given by any of the cocyles appearing in $\bigoplus_{i\in \Z/f\Z} \bigl(\sigma \otimes \big((\mathrm{det}^{-1} \otimes \Sym^2 k^2) \otimes_{k, \theta_i} \Fbar\bigr)\bigr)$.
\end{enumerate}
\end{proposition}
\begin{proof}
By \cite[Prop.~5.1]{Breuil-Paskunas}.
\end{proof}

\begin{corollary}\label{H1-multiple-ways}
If $\sigma$ and $\tau$ are $\GL_2(k)$-representations, then there exists an isomorphism 
$$\Hom (\sigma, H^1(\cK_1, \tau)) \xrightarrow{\sim} H^1(\cK_1, \sigma^{\vee} \otimes \tau).$$
\end{corollary}
\begin{proof}
Via the isomorphism in \Cref{H1-eqn}, one notes that $\sigma^{\vee} \otimes H^1(\cK_1, \tau) \cong H^1(\cK_1, \sigma^{\vee} \otimes \tau).$
\end{proof}
\begin{corollary}\label{ext-finite-level}
If $\sigma$ and $\tau$ are non-isomorphic Serre weights, then $$\Ext^{1}_{\cK}(\sigma, \tau) \cong \Ext^{1}_{\cK/\cK_2}(\sigma, \tau).$$
\end{corollary}
\begin{proof}
We need to verify that every continuous cocycle from $\cK$ to $\sigma^{\vee} \otimes \tau$ factors through $\cK/\cK_2$. It suffices to check this after restricting the cocycle to $\cK_1$. We see from \Cref{computation-of-H1} that since $\sigma$ and $\tau$ are non-isomorphic, 
\begin{align}\label{eqn:gl2-inv}
H^1(\cK, \sigma^{\vee} \otimes \tau)^{\GL_2(k)} \cong \bigoplus_{i\in \Z/f\Z} \bigl(\sigma^{\vee} \otimes \tau \otimes \big((\mathrm{det}^{-1} \otimes \Sym^2 k^2) \otimes_{k, \theta_i} \Fbar\bigr)\bigr)^{\GL_2(k)}.\end{align}
As described in \Cref{computation-of-H1} (i), cocycles in $\sigma^{\vee} \otimes \tau \otimes \big((\mathrm{det}^{-1} \otimes \Sym^2 k^2) \otimes_{k, \theta_i} \Fbar\bigr)$ are obtained by taking tensors with $\Fbar$-valued cocycles $\kappa_i^{l}$, $\epsilon_i$ and $\kappa_i^{u}$. Each of these three factors through $\cK_1/\cK_2$, finishing the proof.
\end{proof}

\begin{lemma}\label{dual-basis-isom}
Let $\sigma_{\mathbf{t}, \mathbf{s}}$ be a Serre weight. Suppose $\{x, y\}$ is the standard basis of $k^{2}$ inducing the basis $\{\otimes_{i \in \Z/f\Z} x^{k_i} y^{s_i - k_i}\}_{\mathbf{k}}$ of $\sigma_{\mathbf{t}, \mathbf{s}}$. Use the same notation to denote the induced basis of $\sigma_{\mathbf{-t-s}, \mathbf{s}}$.
Then $\sigma_{\mathbf{t}, \mathbf{s}}^{\vee} \cong \sigma_{\mathbf{-t-s}, \mathbf{s}}$ under the following map:
\begin{align*}
    &\sigma_{\mathbf{t}, \mathbf{s}}^{\vee} &\rightarrow &&\sigma_{\mathbf{-t-s}, \mathbf{s}} \\
    &\otimes_{i} (x^{k_i} y^{s_i - k_i})^{\vee} &\mapsto && \otimes_i \binom{s_i}{k_i} x^{s_i-k_i} (-y)^{k_i}
\end{align*}
\end{lemma}
\begin{proof}
By direct computation.
\end{proof}

\begin{corollary}\label{ext-cor-ext-dual}
	$\mathrm{Ext}^1_{\cK}(\sigma_{\mathbf{t}, \mathbf{s}}^{\vee}, \sigma_{\mathbf{t'}, \mathbf{s'}}^{\vee}) \neq 0$ if and only if $\mathrm{Ext}^1_{\cK}(\sigma_{\mathbf{t}, \mathbf{s}}, \sigma_{\mathbf{t'}, \mathbf{s'}}) \neq 0$.
\end{corollary}
\begin{proof}
The key observation is that $\sigma_{\mathbf{t}, \mathbf{s}}^{\vee} \cong \sigma_{-\mathbf{t}-\mathbf{s}, \mathbf{s}}$ is a twist of $\sigma_{\mathbf{t}, \mathbf{s}}$ by a power of the determinant character. Thus, the statement of the corollary will hold as long as taking duals of $\sigma_{\mathbf{t}, \mathbf{s}}$ and $\sigma_{\mathbf{t'}, \mathbf{s'}}$ involves twisting by the same character, or in other words, if $\forall a \in k^{\times}$,
$$\prod_{i \in \Z/f\Z}\theta_i (a)^{2t_i + s_i} = \prod_{i \in \Z/f\Z}\theta_i (a)^{2t'_i + s'_i}.$$
Note that the left and right hand sides above give the central characters of $\sigma_{\mathbf{t}, \mathbf{s}}$ and $\sigma_{\mathbf{t'}, \mathbf{s'}}$ respectively. The equality of the two is automatic if $\mathrm{Ext}^1_{\GL_2(k)}(\sigma_{\mathbf{t}, \mathbf{s}}, \sigma_{\mathbf{t'}, \mathbf{s'}}) \neq 0$ because the group algebra of the center of $\GL_2(k)$ is semisimple. Otherwise, using \Cref{computation-of-H1} and \Cref{H1-multiple-ways}, we note that if $H^1(\cK, \sigma_{\mathbf{t}, \mathbf{s}}^{\vee} \otimes \sigma_{\mathbf{t'}, \mathbf{s'}})^{\GL_2(k)}$ is nonzero, then $\sigma_{\mathbf{t}, \mathbf{s}}$ is either isomorphic to $\sigma_{\mathbf{t'}, \mathbf{s'}}$ or is a subrepresentation of $\sigma_{\mathbf{t'}, \mathbf{s'}} \otimes \big((\mathrm{det}^{-1} \otimes \Sym^2 k^2) \otimes_{k, \theta_i} \Fbar\bigr)$ for some $i \in \Z/f\Z$. In either case, the equality of the central characters follows from the fact that $\mathrm{det}^{-1} \otimes \Sym^2 k^2$ has trivial central character.
\end{proof}

\begin{corollary}
	$\mathrm{Ext}^1_{\cK}(\sigma_{\mathbf{t}, \mathbf{s}}, \sigma_{\mathbf{t'}, \mathbf{s'}}) \neq 0$ if and only if $\mathrm{Ext}^1_{\cK}(\sigma_{\mathbf{t'}, \mathbf{s'}}, \sigma_{\mathbf{t}, \mathbf{s}}) \neq 0$.
\end{corollary}
\begin{proof}
By \Cref{ext-cor-ext-dual}, we have $$\mathrm{Ext}^1_{\cK}(\sigma_{\mathbf{t}, \mathbf{s}}, \sigma_{\mathbf{t'}, \mathbf{s'}}) \neq 0 \iff \mathrm{Ext}^1_{\cK}(\sigma_{\mathbf{t}, \mathbf{s}}^{\vee}, \sigma_{\mathbf{t'}, \mathbf{s'}}^{\vee}) \neq 0.$$ Dualizing, the right hand side is equivalent to $\mathrm{Ext}^1_{\cK}(\sigma_{\mathbf{t'}, \mathbf{s'}}, \sigma_{\mathbf{t}, \mathbf{s}}) \neq 0$.\end{proof}

\begin{proposition} \label{H1-criteria}
Let $\sigma_{\mathbf{t}, \mathbf{s}}$ and $\sigma_{\mathbf{t'}, \mathbf{s'}}$ be a pair of non-isomorphic, non-Steinberg Serre weights. Then $\dim H^1(\cK, \sigma_{\mathbf{t}, \mathbf{s}}^{\vee} \otimes \sigma_{\mathbf{t'}, \mathbf{s'}})^{\GL_2(k)} \leq 1$. The dimension is $1$ if and only if there exists an $i \in \Z/f\Z$ such that
\begin{align*}
&s_j = \begin{cases}
    s'_j \pm 2 &\text{ if } j = i,\\
    s'_j &\text{ if } j \neq i,
\end{cases} \\
&\text{and}\\
&\sum\limits_{j \in \Z/f\Z} p^{f-1-j} t_j \equiv \mp p^{f-1-i} + \sum\limits_{j \in \Z/f\Z} p^{f-1-j} t'_j \mod p^f - 1.
\end{align*}
\end{proposition}

\begin{proof}
Proposition 5.4 and Corollary 5.5 in \cite{Breuil-Paskunas}.\end{proof}

\begin{corollary}
Let $\sigma_{\mathbf{t}, \mathbf{s}}$ and $\sigma_{\mathbf{t'}, \mathbf{s'}}$ be a pair of non-isomorphic, non-Steinberg Serre weights. Then $\dim H^1(\cK, \sigma_{\mathbf{t}, \mathbf{s}}^{\vee} \otimes \sigma_{\mathbf{t'}, \mathbf{s'}})^{\GL_2(k)} = \dim H^1(\cK, \sigma_{\mathbf{t'}, \mathbf{s'}}^{\vee} \otimes \sigma_{\mathbf{t}, \mathbf{s}})^{\GL_2(k)}$.
\end{corollary}
\begin{proof} The three criteria listed in \Cref{H1-criteria} are satisfied if and only if they are satisfied upon interchanging $\sigma_{\mathbf{t}, \mathbf{s}}$ and $\sigma_{\mathbf{t'}, \mathbf{s'}}$.\end{proof}


Our next step is to write down $\GL_2(k)$-invariant cocycles in $H^{1}(\cK_1, \ts^{\vee} \otimes \tss)$ as functions $\cK_1 \to \ts^{\vee} \otimes \tss$. This will allow for an explicit description of the map $\mathit{res}$ when $K$ is unramified over $\Qp$, as we will see later. For now, the following lemma will be useful.


\begin{lemma}\label{embedding-serre-weight-in-group-of-cocyles}
	Let $r \leq p-3$. The following are true:
	\begin{enumerate}
		\item We have an isomorphism of $\GL_2(k)$-representations:
\begin{align*}\mathrm{Sym}^{2}k^2 \otimes_k \mathrm{Sym}^{r} k^2 \cong \mathrm{Sym}^{r+2}k^2 \oplus (\Sym^r k^2 \otimes \mathrm{det}) \oplus  (\Sym^{r-2} k^2 \otimes \mathrm{det}^2).\end{align*} 
\item Let bases of $\mathrm{Sym}^{r+2}k^2$, $\mathrm{Sym}^{2}k^2$ and $\mathrm{Sym}^{r}k^2$ be given by
\begin{align*}
&\{w^k z^{r+2-k}\}_{k=0}^{r+2} &\text{ of } &\mathrm{Sym}^{r+2}k^2, &\\
&\{u^j v^{2-j}\}_{j=0}^{2} &\text{ of } &\mathrm{Sym}^{2}k^2, &\text{ and} \\
&\{x^k y^{r-k}\}_{k=0}^{r} &\text{ of } &\mathrm{Sym}^{r}k^2, \\
\end{align*} induced from the standard basis of $k^2$ denoted $\{w, z\}$, $\{u, v\}$ and $\{x, y\}$ respectively. The inclusion $$\Sym^{r+2}k^2 \hookrightarrow \mathrm{Sym}^{2}k^2 \otimes_k \mathrm{Sym}^{r} k^2$$ is given (uniquely upto scalar multiplication) as follows:
	\begin{align*}
		&&&\quad w^k z^{r+2-k} \quad  &\mapsto \qquad &u^2 \otimes \dfrac{k(k-1)}{(r+2)(r+1)} x^{k-2} y^{r+2-k}\\
		&&&&&+ 2uv \otimes \dfrac{k(r+2-k)}{(r+2)(r+1)} x^{k-1} y^{r+1-k} \\
		&&&&& + v^2 \otimes \dfrac{(r+2-k)(r+1-k)}{(r+2)(r+1)} x^{k} y^{r-k} &&\text{for $k\in [2, r]$}\\
		&&&\quad w z^{r+1} \quad &\mapsto \qquad &2uv \otimes \dfrac{1}{r+2} y^{r} + v^2 \otimes \dfrac{r}{r+2} x y^{r-1} && \text{if $r>0$}\\	
		&&&\quad w^{r+1} z \quad &\mapsto \qquad&u^2 \otimes \dfrac{r}{r+2} x^{r-1} y +2uv \otimes \dfrac{1}{r+2} x^{r} &&\text{if $r>0$}\\	
		&&&\quad w z \quad &\mapsto \qquad &uv \otimes 1 &&\text{if $r=0$}\\
		&&&\quad w^{r+2} \quad &\mapsto \qquad &u^2 \otimes x^r &&\\
		&&&\quad z^{r+2} \quad &\mapsto \qquad &v^2 \otimes y^r &&
	\end{align*}
\end{enumerate}

\begin{proof}
	The first statement is from \cite[Prop.~5.4]{Breuil-Paskunas}. The second statement is verified by direct comparison of $\GL_2(k)$-action.
\end{proof}
\end{lemma}

For the following proposition and theorem, we let $\sigma_{\mathbf{t}, \mathbf{s}}$ and $\sigma_{\mathbf{t'}, \mathbf{s'}}$ be a pair of non-isomorphic, non-Steinberg Serre weights such that for some $i \in \Z/f\Z$,
\begin{align}
&s_j = \begin{cases}
    s'_j + 2 &\text{ if } j = i,\\
    s'_j &\text{ if } j \neq i,
\end{cases} \nonumber \\
&\text{and} \nonumber \\
&\sum\limits_{j \in \Z/f\Z} p^{f-1-j} t_j \equiv - p^{f-1-i} + \sum\limits_{j \in \Z/f\Z} p^{f-1-j} t'_j \mod p^f - 1.
\end{align}
(These are the conditions from \Cref{H1-criteria}.) Further, let the bases of $\ts$, $\ts^{\vee}$ and $\tss$ be denoted by
\begin{align*}
&\{\otimes_{j \in \Z/f\Z} w^{k_j} z^{s_j - k_j}\}_{\mathbf{k} \in \mathfrak{K}} &\text{ for } &\ts \text{ as well as for } \sigma_{\mathbf{-t-s}, \mathbf{s}} \cong \ts^{\vee}, &\text{ and}\\
&\{\otimes_{j \in \Z/f\Z} x^{k'_j} y^{s'_j - k'_j}\}_{\mathbf{k'}\in \mathfrak{K'}} &\text{ for } &\tss, 
\end{align*}
which are the bases induced from the standard basis of $k^2$ denoted $\{w, z\}$ in the case of $\ts$ and $\sigma_{\mathbf{-t-s}, \mathbf{s}}$ and denoted $\{x, y\}$ in the case of $\tss$. To be clear, for $\ts^{\vee}$, we are choosing the induced basis of $\sigma_{\mathbf{-t-s}, \mathbf{s}}$ as explained in the paragraph preceding \Cref{computation-of-H1}, instead of the basis obtained by taking duals of the basis of $\ts$. The relationship between our chosen basis and the dual basis is given in \Cref{dual-basis-isom}.  Recall also that the set $\mathfrak{K}$ (resp.  $\mathfrak{K'}$) is the set of all $f$-tuples $\mathbf{k} = (k_j)_j$ (resp.  $\mathbf{k'}=(k'_j)_j$) indexed over $\Z/f\Z$ such that for each $j \in \Z/f\Z$, $k_j \in [0,s_j]$ (resp. $k'_j \in [0,s'_j]$). 


From these, we construct the obvious tensor product basis of $\ts^{\vee} \otimes \tss$ and for later convenience, we write the basis elements by positioning the $i$-th terms of the basis elements of $\ts^{\vee}$ and $\tss$ at the end. Thus, a basis of $\ts^{\vee} \otimes \tss$ is given by the set 
\begin{align}\label{exp-basis} \{\big((\otimes_{j\neq i} w^{k_j} z^{s_j - k_j}) \otimes (\otimes_{j\neq i} x^{k^{\prime}_j}y^{s_j - k'_j})\big) \mathlarger{\otimes} \big(w^{k_i} z^{s_i - k_i} \otimes x^{k'_i}y^{s_i - 2-k'_i}\big)\}_{(\mathbf{k}, \mathbf{k'})}\end{align}
indexed over the elements of $\mathfrak{K} \times \mathfrak{K'}$.

\begin{proposition}\label{gamma-invariant-cocycle}



For Serre weights $\ts$ and $\tss$ as above, the space of $\GL_2(k)$-invariant cocyles of $H^1({\cK}_1/{\cK}_2, \sigma_{\mathbf{t}, \mathbf{s}}^{\vee} \otimes \sigma_{\mathbf{t'}, \mathbf{s'}}) \cong H^1(\cK_1/\cK_2, \sigma_{-\mathbf{t}-\mathbf{s}, \mathbf{s}} \otimes \sigma_{\mathbf{t'}, \mathbf{s'}})$ is spanned by
	\begin{flalign*}
		\kappa_i^{l} A + \epsilon_i B + \kappa_i^{u} C
	\end{flalign*}
where $\kappa_i^{l}$, $\epsilon_i$ and $\kappa_i^{u}$ are homomorphisms ${\cK}_1/{\cK}_2 \to \Fbar$ defined in \Cref{computation-of-H1} and $A$, $B$ and $C$ are elements of $\sigma_{\mathbf{t}, \mathbf{s}}^{\vee} \otimes \sigma_{\mathbf{t'}, \mathbf{s'}}$ defined below.

If $s_i >2$,
\begin{align*}
	&A &=  &&- \left( \sum\limits_{(k_j)_{j\neq i}} (\otimes_{j\neq i} \binom{s_j}{k_j} w^{k_j} (-z)^{s_j - k_j}) \otimes (\otimes_{j \neq i} x^{s_j - k_j} y^{k_j}) \right) \mathlarger{\otimes} \\ 
	&&&&\left(
	\begin{aligned}
	\sum\limits_{k_i = 2}^{s_i-2} \left( \binom{s_i}{k_i}w^{k_i} (-z)^{s_i - k_i} \otimes \dfrac{(s_i - k_i)(s_i - k_i-1)}{s_i(s_i - 1)} x^{s_i - k_i-2} y^{k_i} \right) \\
	+ \left(w (-z)^{s_i - 1} \otimes (s_i -2) x^{s_i - 3} y \right) \\
	+ \left( (-z)^{s_i} \otimes x^{s_i-2}\right)
	\end{aligned}
	\right)
 \\[1cm]
	&B &=  && \left( \sum\limits_{(k_j)_{j\neq i}} (\otimes_{j\neq i} \binom{s_j}{k_j} w^{k_j} (-z)^{s_j - k_j})\otimes (\otimes_{j \neq i} x^{s_j - k_j} y^{k_j}) \right) \mathlarger{\otimes} \\ 
	&&&&\left(
	\begin{aligned}
	\sum\limits_{k_i = 2}^{s_i-2} \left( \binom{s_i}{k_i}w^{k_i} (-z)^{s_i - k_i} \otimes \dfrac{(s_i - k_i)k_i}{s_i(s_i - 1)} x^{s_i - k_i-1} y^{k_i -1} \right) \\
	+ \left( w^{s_i - 1} (-z) \otimes y^{s_i-2} \right) \\
	+ \left( w (-z)^{s_i - 1} \otimes x^{s_i-2} \right) \\
	\end{aligned}
	\right)
\\[1cm]
	&C &=  && \left( \sum\limits_{(k_j)_{j\neq i}} (\otimes_{j\neq i} \binom{s_j}{k_j} w^{k_j} (-z)^{s_j - k_j})\otimes (\otimes_{j \neq i} x^{s_j - k_j} y^{k_j}) \right) \mathlarger{\otimes} \\ 
	&&&&\left(
	\begin{aligned}
	\sum\limits_{k_i = 2}^{s_i-2} \left( \binom{s_i}{k_i}w^{k_i} (-z)^{s_i - k_i} \otimes \dfrac{k_i(k_i-1)}{s_i(s_i - 1)} x^{s_i - k_i} y^{k_i-2} \right) \\
	+ \left( w^{s_i - 1} (-z) \otimes (s_i - 2) x y^{s_i - 3} \right) \\
	+ \left( w^{s_i} \otimes y^{s_i-2}\right)
	\end{aligned}
	\right)
\end{align*}

If $s_i =2$,
\begin{align*}
	&A &=  &&- &\left( \sum\limits_{(k_j)_{j\neq i}} (\otimes_{j\neq i} \binom{s_j}{k_j} w^{k_j} (-z)^{s_j - k_j})\otimes (\otimes_{j \neq i} x^{s_j- k_j} y^{k_j}) \right) \mathlarger{\otimes} \left((-z)^{s_i} \otimes 1\right)\\
	&B &=  &&& \left( \sum\limits_{(k_j)_{j\neq i}} (\otimes_{j\neq i} \binom{s_j}{k_j} w^{k_j} (-z)^{s_j - k_j})\otimes (\otimes_{j \neq i} x^{s_j - k_j} y^{k_j}) \right) \mathlarger{\otimes} \left(  - w z \otimes 1 \right)\\
	&C &=  &&& \left( \sum\limits_{(k_j)_{j\neq i}} (\otimes_{j\neq i} \binom{s_j}{k_j} w^{k_j} (-z)^{s_j - k_j})\otimes (\otimes_{j \neq i} x^{s_j - k_j} y^{k_j}) \right) \mathlarger{\otimes}
	\left( w^{s_i} \otimes 1 \right)
\end{align*}
\end{proposition}

\begin{proof}
We use \Cref{embedding-serre-weight-in-group-of-cocyles} and \Cref{H1-multiple-ways} to identify a non-trivial element of the one-dimensional space $H^1(\cK_1, \ts^{\vee} \otimes \tss)^{\GL_2(k)}$. Writing the duals of the chosen basis of $\ts$ in terms of our chosen basis for $\ts^{\vee}$
using \Cref{dual-basis-isom}, we find that the element is given by $$A \otimes u^2 + 2B \otimes uv + C \otimes v^2$$ where $\{u^2, uv, v^2\}$ are the basis of $(\mathrm{det}^{-1} \otimes \Sym^2 k^2) \otimes_{k, \theta_i} \Fbar$ induced from the standard basis of $k^2$ denoted $\{u, v\}$ and $A$, $B$ and $C$ are as in the statement of the Proposition. Using \Cref{cocyle-to-basis-element}, $A \otimes u^2 + 2B \otimes uv + C \otimes v^2$ is seen to be the cocycle $\kappa_i^{l} A + \epsilon_i B + \kappa_i^{u} C$.\end{proof}

Armed with descriptions of the groups flanking $H^{1}(\cK, \ts^{\vee} \otimes \tss)$ in the left exact sequence \Cref{inf-res-gss}, our next order of business is to study the map denoted $\mathit{res}$. 
Continuing with the setup of \Cref{gamma-invariant-cocycle}, we will now check if a $\GL_2(k)$-invariant cocycle in $H^1({\cK}_1, \sigma_{\mathbf{t}, \mathbf{s}}^{\vee} \otimes \sigma_{\mathbf{t'}, \mathbf{s'}})$ is in the image of the $\mathit{res}$ map in \Cref{inf-res-gss}. Therefore, we will try and extend such a cocyle (necessarily factoring through $\cK_1/\cK_2$ by \Cref{ext-finite-level}) to $\cK/\cK_2$. 

\begin{theorem}\label{ext-GL_2(O_K)-unramified-generic}
Let $e=1$. Let $\ts$ and $\tss$ be a pair of Serre weights as in the setup of \Cref{gamma-invariant-cocycle}. Further, let $s_{i+1} < p-1$. Then $\mathit{res}$ is the zero map.
\end{theorem}
\begin{proof}




We will assume without loss of generality that $t'_j = 0$ for each $j \in \Z/f\Z$. Let $U$ be the group of upper unipotent matrices, and let $D$ be the group of diagonal matrices with $1$ in the bottom right entry. We identify $U$ with $\cO_K$ under the isomorphism
\begin{align}\label{eqn:U_OK}
    \cO_K  &\xlongrightarrow{\sim} U \\
   \alpha &\longmapsto U(\alpha) := \begin{pmatrix} 1 & \alpha \\
0& 1
\end{pmatrix} \nonumber 
\end{align}
and $D$ with $\cO_K^{\times}$ and $k^{\times} \times (1 + \pi \cO_K)$ under the isomorphisms
\begin{align}\label{eqn:D_OK}
&\cO_K^{\times} &&\xlongrightarrow{\sim} &\qquad k^{\times} \times (1 + \pi \cO_K)  &&\xlongrightarrow{\sim} &\qquad D \\
&t &&\longmapsto &\qquad \bar{t}, t - [\bar{t}] &&\longmapsto & \qquad D(t) := \begin{pmatrix} t & 0 \\
0& 1
\end{pmatrix} \nonumber
\end{align}
where $\bar{t}$ is the mod $\pi$ reduction of $t$ and $[\bar{t}]$ is the Teichm{\"u}ller lift of $\bar{t}$.

Let $W$ be the $\langle U, D \rangle$-representation obtained by taking the quotient of $\ts^{\vee} \otimes \sigma_{\mathbf{t'}, \mathbf{s'}}$ by the $\langle U, D \rangle$-invariant $\Fbar$-subspace spanned by those basis elements in \Cref{exp-basis} whose corresponding index $(\mathbf{k}, \mathbf{k'})$ satisfies $k'_j \neq 0$ for some $j$. 
For $\mathbf{k} = (k_j)_j \in \mathfrak{K}$ satisfying $k_j \in [0, s_j]$ for each $j$, we let $\mathfrak{e}_{\mathbf{k}}$ denote the image of $$\big((\otimes_{j \neq i} w^{k_j} z^{s_j - k_j}) \otimes (\otimes_{j \neq i} y^{s_j})\big) \mathlarger{\otimes} \big(w^{k_i} z^{s_i - k_i} \otimes y^{s_i - 2}\big)$$ in $W$.
Evidently, the set $\{\mathfrak{e}_{\mathbf{k}}\}_{\mathbf{k} \in \mathfrak{K}}$ gives a basis of $W$.

Consider the cocycle $\kappa_i^{l} A + \epsilon_i B + \kappa_i^{u} C$ defined in \Cref{gamma-invariant-cocycle}. Denote by $\psi$ the cocycle obtained by restricting the domain to $\langle U, D \rangle \cap \cK_1$ and composing the codomain with the quotient map $\ts^{\vee} \otimes \tss \twoheadrightarrow W$. 
Explicitly, since $\kappa_i^{l}$ vanishes on $\langle U, D \rangle \cap \cK_1$, the cocycle $\psi$ is given by 
\begin{align}\label{cocycle-psi}\psi = - \epsilon_i \mathfrak{e}_{\mathbf{a}} + \kappa_i^{u} \mathfrak{e}_{\mathbf{b}},\end{align}
where $\mathbf{a} = (a_j)_j$ and $\mathbf{b} = (b_j)_j$ satisfy $a_i = s_i - 1$, $a_j = s_j$ for all $j \neq i$ and $b_j = s_j$ for all $j$.  

The proof of the theorem will follow if we show that there does not exist an extension of $\psi$ to a $W$-valued cocycle defined on all of $\langle U, D \rangle$. Suppose, on the contrary that such an extension exists, denoted $q$.  Denote by $q_{\mathbf{k}}$ the coordinates of $q$ corresponding to the basis vector $\mathfrak{e}_{\mathbf{k}}$. 

We study first the cocycle $q|_{D}$. We note that for $t \in \cO_K^{\times}$,
$$D(t) \cdot \mathfrak{e}_{\mathbf{k}}= \theta_{f-1}(\bar{t})^{\lambda_{\mathbf{k}}} \mathfrak{e}_{\mathbf{k}}$$
where $\lambda_{\mathbf{k}}$ is the unique integer in $[0, p^f -1)$ that is equivalent mod $p^f - 1$ to 
\begin{align}\label{eigenvalue}
\sum_{\substack{j \in [0, f-1], \\ \not\equiv i}} p^{f-1-j}(k_j - s_j) + p^{f-1-i} (k_i - s_i +1).\end{align}

Thus, letting $\Fbar (\lambda_{\mathbf{k}})$ denote a one-dimensional $\Fbar$-vector space with action of $t \in \cO_K^{\times}$ given by multiplication with $\theta_{f-1}(\bar{t})^{\lambda_{\mathbf{k}}}$, $$q_{\mathbf{k}}|_{D}: D \to \Fbar (\lambda_{\mathbf{k}})$$ is a cocycle for each $\mathbf{k}$. 
Suppose $\lambda_{\mathbf{k}} \neq 0$. An examination of the expression in \Cref{eigenvalue} shows that this is equivalent to $k_i \neq s_i - 1$ or $k_j \neq s_j$ for some $j \neq i$. This is in turn equivalent to $q_{\mathbf{k}}|_{D \cap \cK_1} = 0$, using \Cref{cocycle-psi}. Thus, under the identification of $D$ with $\cO_K^{\times}$ in \Cref{eqn:D_OK}, $q_{\mathbf{k}}|_{D}$ can be viewed as a cocycle $k^{\times} \to \Fbar(\lambda_{\mathbf{k}})$. We claim that this cocycle is given by a coboundary. Indeed,
$$\sum\limits_{\xi \in k^{\times}} \xi^{\lambda_{\mathbf{k}}} = 0$$ 
because if $\xi_0$ is the generator of the cyclic group $k^{\times}$, $$\xi_0^{\lambda_{\mathbf{k}}} \sum\limits_{\xi \in k^{\times}} \xi^{\lambda_{\mathbf{k}}} =  \sum\limits_{\xi \in k^{\times}} (\xi_0 \xi)^{\lambda_{\mathbf{k}}} = \sum\limits_{\xi \in k^{\times}} \xi^{\lambda_{\mathbf{k}}}.$$ 
Hence, $$H^1(k^{\times}, \Fbar(\lambda_{\mathbf{k}})) \cong {\Fbar}/{(\theta_{f-1}(\xi_0)^{\lambda_{\mathbf{k}}} - 1)\Fbar} = 0.$$ 
Therefore, we can fix an element $e_{\mathbf{k}} \in \Fbar$ such that $$q_{\mathbf{k}}(D(t)) = D(t) \cdot e_{\mathbf{k}} - e_{\mathbf{k}}.$$ Having thus defined $e_{\mathbf{k}}$ when $\lambda_{\mathbf{k}} \neq 0$, we let $e_{\mathbf{k}} := 0$ when $\lambda_{\mathbf{k}} = 0$. Let $e \in W$ be the vector whose $\mathfrak{e}_{\mathbf{k}}$-th coordinate is given by 
$e_{\mathbf{k}}$. By subtracting from $q$ the coboundary $$D(t) \mapsto D(t) \cdot e - e$$ if necessary, we may assume that $q_{\mathbf{k}}|_{D} = 0$ whenever $\lambda_{\mathbf{k}} \neq 0$. When $\lambda_{\mathbf{k}} = 0$, $q_{\mathbf{k}}|_{D}: D \to \Fbar$ is a group homomorphism. Since the order of $k^{\times}$ is prime to $p$, $q_{\mathbf{k}} (D([\xi])) = 0$ for all $\xi \in k^{\times}$. Hence, $q([\xi]) = 0$ for all $\xi \in k^{\times}$.

Next, we study $q|_{U}$. Suppose $\mathbf{k} \neq \mathbf{s}$. Then by \Cref{cocycle-psi}, $q_{\mathbf{k}}|_{U(\pi \mathcal{O}_K)} = 0$ and $q_{\mathbf{k}}|_{U}$ can be seen as a map on $k$. Since $$D([\xi]) U(\alpha) = U([\xi]\alpha) D([\xi])$$ for $\xi \in k^{\times}$, $\bar{\alpha} \in k$ and $\alpha \in \cO_K$ any lift of $\bar{\alpha}$, we have
\begin{align*}
    \theta_{f-1}(\xi)^{\lambda_{\mathbf{k}}}q_{\mathbf{k}}(U(\alpha)) =  q_{\mathbf{k}}(D([\xi]) U(\alpha)) = q_{\mathbf{k}}(U([\xi]\alpha) D([\xi])) = q_{\mathbf{k}}(U([\xi]\alpha)).
\end{align*} 
If $\bar{\alpha} = 0$, then by direct calculation, and if $\bar{\alpha} \neq 0$, then by replacing $\alpha$ with $1$ and $\xi$ with $\bar{\alpha}$ in the equation above, we find that
\begin{align}\label{q(alpha)}
    \theta_{f-1}(\bar{\alpha})^{\lambda_{\mathbf{k}}} q_{\mathbf{k}}(U(1)) = q_{\mathbf{k}}(U(\alpha)).
\end{align}

Next, we will show through an inductive argument that if $\mathbf{k} \neq \mathbf{s}$, then $q_{\mathbf{k}}|_{U} = 0$. The argument will crucially use the following observation: 
If $\mathbf{k} \neq \mathbf{s}$, then there does not exist $m \in [0, f-1]$ such that $\lambda_{\mathbf{k}} = p^{f-1-m}$.
    To see this, suppose on the contrary that
    $$\sum\limits_{j \not\equiv i}p^{f-1-j}(k_j - s_j) + p^{f-1-i} (k_i - s_i +1) \equiv p^{f-1-m} \mod p^f - 1.$$
    If $m \equiv i$ mod $f$, then $k_j = s_j$ for all $j \in \Z/f\Z$, a contradiction. Now suppose $m \not\equiv i$ mod $f$. Identifying $i$ with its lift in $[0, f-1]$, if $i < m$ (resp. $i > m$), there exists a largest $l \in [i+1, m]$ (resp. $[i+1-f, m]$) such that viewing $l$ as an element of $\Z/f\Z$, $s_l < p-1$. This is because $s_{i+1} < p-1$ by hypothesis.
    Thus, mod $p^f - 1$, $$\sum_{\substack{j \in [l, m] \\ \mod f}} p^{f-1-j}s_j + p^{f-1-m} \equiv p^{f-1-l}(s_l + 1).$$ Consequently,
    $$\sum_{j}p^{f-1-j} k_j \equiv \sum_{\substack{j \not\in [l, m] \cup \{i\}  \\ \mod f}} p^{f-1-j}s_j + p^{f-1-i} (s_i - 1) + p^{f-1-l} (s_l + 1),$$ implying that $k_l = s_l + 1$, an impossibility as for each $j$, $k_j \in [0, s_j]$.    

For each $n \geq 0$, we define a relation $<_{n}$ on the set of indices $\mathbf{k}$ by saying $$\mathbf{m} = (m_j)_j <_{n} \mathbf{k} = (k_j)_j$$ if $m_j \leq k_j$ for each $j$ and $\sum_j (k_j - m_j) = n$. Let $\mathbf{k} \not\in \{(0)_j, \mathbf{s}\}$ be such that if there exists $\mathbf{m} <_n \mathbf{k}$ for some $n \geq 2$, then $q_{\mathbf{m}}|_{U} = 0$. Then for each $\alpha \in \mathcal{O}_K$,


\begin{align*}
    &q_{\mathbf{k}}(U(\alpha)) + q_{\mathbf{k}}(U(1)) + \\
    &\hspace{2.4cm}\sum_{\mathbf{m} <_1 \mathbf{k}}
    \left(
    \left(\prod_{j}\binom{s_j - m_j}{k_j - m_j}\right) \theta_{f-1}(\bar{\alpha})^{\sum\limits_j p^{f-1-j}(k_j - m_j)}q_{\mathbf{m}}(U(1))
    \right) \\
    \vspace{0.5cm}\\
    &= q_{\mathbf{k}}(U(\alpha+1)) \\
    \vspace{0.5cm}\\
    &= q_{\mathbf{k}}(U(1+\alpha))\\
    \vspace{0.5cm}\\
    &=q_{\mathbf{k}}(U(1)) + q_{\mathbf{k}}(U(\alpha)) + \sum_{\mathbf{m} <_1 \mathbf{k}}
    \left(
    \left(\prod_{j}\binom{s_j - m_j}{k_j - m_j}\right) q_{\mathbf{m}}(U(\alpha))
    \right)\\
\end{align*}

Therefore, 
\begin{align*}
    &\sum_{\mathbf{m} <_1 \mathbf{k}}
    \left(
    \left(\prod_{j}\binom{s_j - m_j}{k_j - m_j}\right) \theta_{f-1}(\bar{\alpha})^{\sum\limits_j p^{f-1-j}(k_j - m_j)}q_{\mathbf{m}}(U(1))
    \right)  \\
    &=\sum_{\mathbf{m} <_1 \mathbf{k}}
    \left(
    \left(\prod_{j}\binom{s_j - m_j}{k_j - m_j}\right) q_{\mathbf{m}}(U(\alpha))
    \right)\\
     &= \sum_{\mathbf{m} <_1 \mathbf{k}}
    \left(
    \left(\prod_{j}\binom{s_j - m_j}{k_j - m_j}\right) \theta_{f-1}(\bar{\alpha})^{\lambda_{\mathbf{m}}} q_{\mathbf{m}}(U(1))
    \right) &&\text{(by \Cref{q(alpha)})}
\end{align*}
In particular, each element in the image of $\theta_{f-1} \colon k \hookrightarrow \F$ satisfies the following polynomial in $x$:

\begin{flalign*}
    &\sum_{\mathbf{m} <_1 \mathbf{k}} \left(\prod_{j}\binom{s_j - m_j}{k_j - m_j}\right) q_{\mathbf{m}}(U(1)) x^{\lambda_{\mathbf{m}}} \\
    &- \sum_{\mathbf{m} <_1 \mathbf{k}}
    \left(\prod_{j}\binom{s_j - m_j}{k_j - m_j}\right) q_{\mathbf{m}}(U(1)) x^{\sum\limits_j p^{f-1-j}(k_j - m_j)} \\
\end{flalign*}
If there exists an $\mathbf{m} <_1 \mathbf{k}$ such that $q_{\mathbf{m}}(U(1))$ is non-zero, then this polynomial is nonzero because for each $\mathbf{m'} <_1 \mathbf{k}$, there exists a distinct $l(\mathbf{m'}) \in [0, f-1]$ such that $$\sum\limits_j p^{f-1-j}(k_j - m'_j) \equiv p^{f-1-l(\mathbf{m'})} \mod p^f - 1$$ while $\lambda_{\mathbf{m'}} \not\equiv p^{f-1-l}$ for any $l$, as noted earlier. Since a polynomial of degree less than $p^f - 1$ cannot have $|k|$ distinct roots, this creates a contradiction. Therefore, for all $\mathbf{m} <_1 \mathbf{k}$, $q_{\mathbf{m}}(U(1)) = 0$ and by \Cref{q(alpha)}, $q_{\mathbf{m}}|_{U} = 0$. 

Applying this argument to $\mathbf{k}$ satisfying $(0)_j <_1 \mathbf{k}$, we find that  $q_{(0)_j}|_{U} = 0$. Applying inductively to $\mathbf{k}$ satisfying $(0)_j <_n \mathbf{k}$ for $n \in [2, \sum_{j} s_j]$, we find that whenever $\mathbf{m} \neq \mathbf{s}$, $q_{\mathbf{m}}|_{U} = 0$.





Finally, we come to the last leg of the proof. Because $q_{\mathbf{k}}|_{U} = 0$ for each $\mathbf{k} \neq \mathbf{s}$, $$q_{\mathbf{s}}(U(\alpha + \beta)) = q_{\mathbf{s}}(U(\alpha)) + q_{\mathbf{s}}(U(\beta)).$$ Therefore $q_{\mathbf{s}}(U(p)) = p q_{\mathbf{s}}(U(1)) = 0$. From \Cref{cocycle-psi}, we know that $q_{\mathbf{s}}|_{U \cap \cK_1} = \kappa_{i}^{u}|_{U \cap \cK_1}$. As $p$ is the uniformizer of $\mathcal{O}_K$, $\kappa_{i}^{u}(U(p)) \neq 0$, giving a contradiction.
\end{proof}

\begin{theorem}\label{GL_2(O_K)-ramified-splits}
	 	Suppose $e > 1$. Let $\ts$ and $\tss$ be non-isomorphic Serre weights and let $\sigma$ denote the $\GL_2(k)$-representation $\ts^{\vee} \otimes \tss$. Then $\mathit{res}$ is a surjective map.
\end{theorem}

\begin{proof}
	Since $e > 1$, we have $p \in \pi^2 \mathcal{O}_K$. Let $\mathcal{O}_K^{\mathrm{ur}}$ be the ring of integers for the maximal unramified extension of $\Qp$ in $K$. The natural quotient map $\cO_K/\pi^2 \to k$ admits a splitting given by $k \cong \mathcal{O}_K^{\mathrm{ur}}/p \hookrightarrow \mathcal{O}_K/\pi^2$.  This induces a splitting of the exact sequence
\begin{center}
		\begin{tikzcd}
			1\arrow{r} & \cK_1/\cK_2\arrow{r} & \cK/\cK_2\arrow{r} & \GL_2(k)\arrow{r}\arrow[bend left=38, dashed]{l} & 1
		\end{tikzcd}
\end{center}
Therefore, $\cK/\cK_2 \cong \cK_1/\cK_2 \rtimes \GL_2(k)$.

Suppose $\psi$ is a cocycle representing a nonzero element of $H^1(\cK_1/\cK_2, \sigma)^{\GL_2(k)}$. As $\cK_1/\cK_2$ action is trivial on $\sigma$, $H^1(\cK_1/\cK_2, \sigma)^{\GL_2(k)} = Z^1(\cK_1/\cK_2, \sigma)^{\GL_2(k)}$. Invariance under ${\GL_2(k)}$-action means precisely that if $b \in \GL_2(k)$ and $a \in \cK_1/\cK_2$, then $b^{-1} \psi(a^b) = \psi(a)$, where $a^b$ denotes $b a b^{-1}$.

We define a function $\delta$ on $\cK_1/\cK_2 \rtimes \GL_2(k)$ by setting $\delta((a, b))$ equal to $\psi(a)$. We claim that $\delta$ is a cocyle, i.e., $\delta((a, b)(a',b')) = \delta((a, b)) + (a, b)\cdot \delta((a', b'))$. Evaluation of the left hand side gives us:
\begin{align*}
	\text{L.H.S.} &= \delta((a a'^b,bb')) \\
	&= \psi(a a'^b) \\
	&= \psi(a) + \psi(a'^b)
\end{align*}

Evaluation of the right hand side gives us:
\begin{align*}
	\text{R.H.S.} &= \psi(a) + (a, 1)(1, b)\cdot \psi(a') \\
	&= \psi(a) + (1, b)\cdot \psi(a') \\
	&= \psi(a) + (1, b)\cdot((1, b^{-1})\cdot \psi(a'^{b})) &\text{(as $\psi$ is $\GL_2(k)$-invariant)} \\
	&= \psi(a) + \psi(a'^{b}) \\
	&= \text{L.H.S.}
\end{align*}

This establishes that $\delta$ is a cocyle with $\mathit{res}(\delta) =\psi$. Using \Cref{ext-finite-level}, we conclude that the map $\mathit{res}$ is a surjection. \end{proof}


\section{Families of \texorpdfstring{$G_K$}{GK}-representations and stack dimensions}\label{geometry}
In this section, we study the relationship between families of representations and dimensions of closed substacks of $\mathcal{X}$.
We note first that there exists a map of sets \begin{align*}
  \mathit{rep}: \bigcup_{\chi_1, \chi_2} \mathrm{Ext}^{1}_{\Fbar[G_K]}(\chi_2, \chi_1) \to |\cX|
  \end{align*} where the union in the domain is over pairs of $G_K$-characters $\chi_1, \chi_2\colon G_K \to \Fbar^{\times}$ and the map is given by sending the equivalence class of a short exact sequence $1 \to \chi_1 \to V \to \chi_2 \to 1$ to the representation $V$.

For $x \in \Fbar^{\times}$, let $$\mathrm{ur}_x\colon G_K \to \Fbar^{\times}$$ be the character that factors through $G_K \twoheadrightarrow \Gal(k^{\text{sep}}/k)$ mapping the geometric Frobenius to $x$. We say that two $G_K$-representations with $\Fbar$-coefficients are unramified twists of each other if one can be obtained from the other by tensoring with $\mathrm{ur}_x$ for some $x$.

\begin{definition}
  A set $\cF \subset |\cX|$ of isomorphism classes of two-dimensional $G_K$-representations with $\Fbar$-coefficients is a ``family" of representations if there exists a pair of $G_K$-characters $$\chi_1, \chi_2\colon G_K \to \Fbar^{\times}$$ so that for each $V \in \cF$, there exist $a, b \in \Fbar^{\times}$ such that $V = \mathit{rep}(\mathfrak{a})$ for some $\mathfrak{a} \in \mathrm{Ext}^{1}_{\Fbar[G_K]}(\chi_2 \otimes \mathrm{ur}_{b}, \chi_1 \otimes \mathrm{ur}_{a})$.
  


  \end{definition}
  To indicate the data of $\chi_1$ and $\chi_2$ in such a family, we will denote the family by $\mathcal{F}_{\chi_1, \chi_2}$. Note that the choice of $\chi_1$ and $\chi_2$ is non-unique, and in particular, can be changed by unramified twists.

  


 \begin{definition}
	We say that two families $\mathcal{F}_{\chi_1, \chi_2}$ and $\mathcal{F}_{\chi'_1, \chi'_2}$ are \textit{separated} if the set $\cF_{\chi_1, \chi_2} \cup \cF_{\chi'_1, \chi'_2}$ is not itself a family. Otherwise, we say that $\mathcal{F}_{\chi_1, \chi_2}$ and $\mathcal{F}_{\chi'_1, \chi'_2}$ are not separated.
\end{definition}

\begin{definition}
Let $\cE$ be a closed substack of $\cX$. A family $\cF \subset |\cE|$ is said to be maximal in $\cE$ if any family $\cF' \subset |\cE|$ that is not separated from $\cF$ satisfies $\cF' \subset \cF$.
\end{definition}

Consider $\Gm \times \Gm = \Spec \F[x, x^{-1}, y, y^{-1}]$ as parametrizing the unramified twists of $\chi_1$ and $\chi_2$ in the following way:
If $p = (p(x), p(y)) \in \Gm \times \Gm (\Fbar)$, then $p$ corresponds to the pair of characters $\chi_1 \otimes \mathrm{ur}_{p(x)}$ and $\chi_2 \otimes \mathrm{ur}_{p(y)}$.
	
	\begin{definition}
	
	 We say that a family $\mathcal{F}_{\chi_1, \chi_2}$ is \textit{of dimension $\leq d$} (resp. \textit{of dimension $d$)} if there exists a dense open subset $W$ of $\Gm \times \Gm$ such that if $p \in W(\Fbar)$, then 
  $$\{\mathfrak{a} \in \mathrm{Ext}^{1}_{\Fbar[G_K]}(\chi_2 \otimes \mathrm{ur}_{p(y)}, \chi_1 \otimes \mathrm{ur}_{p(x)}) | \mathit{rep}(\mathfrak{a}) \in \cF_{\chi_1, \chi_2}\}$$ %
  is a subspace 
of dimension $\leq d$ (resp. of dimension $d$).

	\end{definition}
	

Let $\{\sigma_i\}_{i\in I}$ be a set of non-Steinberg weights for some indexing set $I$. We record a fact from \cite{gls} that we will use in this section. Suppose $\chi_1$ and $\chi_2$ are distinct $G_K$-characters with coefficients in $\Fbar$. Whenever non-empty, the set 
\begin{align}\label{defn:L-sigma}
L_{\sigma}(\chi_1, \chi_2) := \{\mathfrak{a} \in \Ext^{1}_{\Fbar[G_K]}(\chi_2, \chi_1) | \sigma \in W(\mathit{rep}(\mathfrak{a}))\}
\end{align} is an $\Fbar$-subspace of $\Ext^{1}_{\Fbar[G_K]}(\chi_2, \chi_1)$. 
Further, suppose $\chi'_1$ and $\chi'_2$ are unramified twists of $\chi_1$ and $\chi_2$ respectively. If $\chi'_1 \neq \chi'_2$,
\begin{align}\label{inv-L-sigma} \dim_{\Fbar} \big(\bigcap_{i\in I} L_{\sigma_i}(\chi_1, \chi_2)\big) = \dim_{\Fbar} \big(\bigcap_{i\in I} L_{\sigma_i}(\chi'_1, \chi'_2)\big)\end{align}
and
if $\chi'_1 = \chi'_2$, then \begin{align}\label{special-L-sigma}\dim_{\Fbar} \big(\bigcap_{i\in I} L_{\sigma_i}(\chi_1, \chi_2)\big) + 1= \dim_{\Fbar} \big( \bigcap_{i\in I} L_{\sigma_i}(\chi'_1, \chi'_2)\big).\end{align}





\subsection{Scheme-theoretic families of \texorpdfstring{$G_K$}{GK}-representations}
There exist finitely many $\Fbar$-valued characters of $I_K$ that admit extensions to $G_K$. Each such character is in fact valued in $\mathbb{F}$ and is described uniquely by $a = (a_i)_{i\in \Z/f\Z}$ with each $a_i \in [0, p-1]$ and at least some $a_i < p-1$. Let $\cA$ be the set of such $f$-tuples indexed over $\Z/f\Z$. Then for $a \in \cA$, the corresponding $I_K$-character is given by $$\prod_{i \in \Z/f\Z}\omega_i^{a_i}.$$ Fix an extension of such a character to $G_K$, and denote it by $\psi_{a}\colon G_K \to \F^{\times}$. When each $a_i = 0$, take $\psi_{a} = 1$. When $a$ satisfies
$$\prod_{i \in \Z/f\Z}\omega_i^{a_i} = \epsilon|_{I_{K}}$$ where $\epsilon$ is the mod $p$ cyclotomic character, take $\psi_{a}$ to be $\epsilon$.
By applying the functor $\mathbf{D}$ defined in \cite[Sec.~3.6]{emerton2019moduli}, we obtain a set of $(\varphi, \Gamma)$-modules $\{\mathbf{D}(\psi_{a})\}_{a \in \cA}$ with $\mathbb{F}$-coefficients. Recall that if $A$ is an $\F$-algebra, a $(\varphi, \Gamma)$-module $N$ with $A$-coefficients is a module over the ring $\mathbf{A}_{K, A}$, defined in \cite[Sec.~2.2]{emerton2019moduli}. We will say that $N$ is defined over $A$ or $\Spec A$. If there exists a map $f: \Spec B \to \Spec A$ of $\F$-schemes, then we will denote by $N_B$ or $N|_{\Spec B}$ or $f^{*}N$ the $(\varphi, \Gamma)$-module obtained by changing coefficients from $A$ to $B$, that is, the $(\varphi, \Gamma)$-module $N \otimes_{\mathbf{A}_{K, A}} \mathbf{A}_{K, B}$.
For  \'etale $(\varphi, \Gamma)$-modules $N_1$ and $N_2$ defined over schemes $\Spec A_1$ and $\Spec A_2$ respectively, we let $N_1 \boxtimes N_2$ denote the \'etale $(\varphi, \Gamma)$-module $\mathrm{pr}_1^{*}N_1 \otimes_{\mathbf{A}_{K, A_1 \otimes A_2}} \mathrm{pr}_2^{*}N_2$ defined over $\Spec A_1 \times \Spec A_2$ where for $i \in \{1, 2\}$, $\mathrm{pr}_i \colon \Spec A_1 \times \Spec A_2 \to \Spec A_i$ is the projection onto the $i$-th factor. 

Let $M$ be a rank $1$ {\'e}tale $(\varphi, \Gamma)$-module over $\Gm = \Spec \mathbb{F}[x, x^{-1}]$ such that it is  generated as a free module over $\mathbf{A}_{K, \mathbb{F}[x, x^{-1}]}$ by $v \in M$ with $$\varphi(v) = xv$$ and trivial $\Gamma$-action. As described in \cite[Sec.~5.3]{emerton2019moduli}, if $\alpha \in \Fbar^{\times}$, then $$\mathbf{D}(\mathrm{ur}_{\alpha}) = p^{*}M$$ where $p: \Spec \Fbar \to \Gm$ is the map which at the level of rings is given by mapping $x \mapsto \alpha$. In this sense, $M$ interpolates the unramified characters of $G_K$.    

For $a \in \cA$, let $M_a$ denote the {\'e}tale $(\varphi, \Gamma)$-module $\mathbf{D}(\psi_{a}) \otimes_{\mathbf{A}_{K, \F}} M$ defined over $\Gm$. Thus $M_a$ interpolates the unramified twists of $\psi_{a}$.
Let $X_a = \Gm$ when $\psi_{a}$ is not trivial or cyclotomic, and let $X_a = \Gm \smallsetminus \{1\} = \Spec \F[x, x^{-1}][\frac{1}{x-1}]$ when $\psi_{a}$ is trivial or cyclotomic.

We now make some constructions following \cite[Sec.~5]{emerton2019moduli}. For details refer to \textit{loc. cit.}

\subsubsection*{Extensions of a character by a generic character}
The cohomology groups of $\mathcal{C}^{\bullet}(M_a|_{X_a})$, the Herr complex associated to $M_a|_{X_a}$, vanish in degrees $0$ and $2$ (the latter by Tate local duality). Following the arguments in \cite[Sec.~5.4]{emerton2019moduli}, the cohomology group in degree $1$ is compatible with arbitrary finite type base-change and gives a coherent sheaf on $X_a$. By the local Euler characteristic formula, the rank of this sheaf is the constant $[K:\mathbb{Q}_p]$ at every point. Thus, the reducedness of the base implies that it is a locally free sheaf. Denoting the total space of this sheaf by $V_a$, we find using \cite[Lem.~7.1.2]{emerton2020moduli} that $V_a$ parameterizes extensions of $\mathbf{D}(1)|_{X_a}$ by $M_a|_{X_a}$. That is, $V_a$ represents the functor that assigns to any $\F$-algebra $A$ the data of isomorphism classes of pairs $f = (\underline{f}, \mathfrak{a})$ where $\underline{f} \colon \Spec A \to X_a$ is a map and $\mathfrak{a} \in \Ext^{1}_{(\varphi, \Gamma)/\mathbf{A}_{K, A}}(\mathbf{D}(1)_A, \underline{f}^{*}M_a|_{X_a})$ is an extension class.
For each $b \in \cA$, we can consider $$V_{a, b} := V_a \times \Gm$$ as parameterizing extensions of $\mathbf{D}(1)|_{X_a} \boxtimes M_b$ by 
$M_a|_{X_a} \boxtimes M_b$, \'etale $(\varphi, \Gamma)$-modules defined over $X_a \times \Gm$. That is, if $f = (\underline{f}, \mathfrak{a}) \in V_a(A)$ and $g \in \Gm(A)$, then $(\underline{f},g)$ gives a map $\Spec A \to X_a \times \Gm$ and if $\mathfrak{a}$ is the equivalence class of a short exact sequence
$$1 \to \underline{f}^{*}M_a|_{X_a} \xrightarrow{\iota} E \xrightarrow{\pi} \mathbf{D}(1)_A\to 1,$$ then the extension class encoded by $(f, g) \in V_{a,b}(A)$ is the equivalence class of the short exact sequence 
$$1 \to \underline{f}^{*}M_a|_{X_a} \otimes_{\mathbf{A}_{K, A}} g^{*} M_b \xrightarrow{\iota \otimes 1} E \otimes_{\mathbf{A}_{K, A}} g^{*} M_b\xrightarrow{\pi \otimes 1} \mathbf{D}(1)_A\otimes_{\mathbf{A}_{K, A}} g^{*} M_b \to 1.$$
Furthermore, $E \otimes_{\mathbf{A}_{K, A}} g^{*} M_b$ gives an $A$-point of $\cX$, giving rise to a morphism
$$f_{a,b}\colon V_{a,b} \xrightarrow{} \mathcal{X}$$
mapping an extension of a pullback of $\mathbf{D}(1)|_{X_a} \boxtimes M_b$ by a pullback along the same morphism of 
$M_a|_{X_a} \boxtimes M_b$ to the corresponding \'etale $(\varphi, \Gamma)$-module.
There also exists a map $$V_{a,b} \xrightarrow{\pi_{a, b}} B_{a,b} := X_a \times \Gm$$ induced by the structure map $V_a \to X_a$ and the identity map $\Gm \to \Gm$. On closed points, the map $\pi_{a, b}$ corresponds to choices of unramified twists of $\psi_{a}$ and $\psi_{b}$ respectively.

As described in \cite[Sec.~7.3]{emerton2020moduli}, there exists an action of $\Gm \times \Gm$ on extensions of $\mathbf{D}(1)|_{X_a} \boxtimes M_b$ by $M_a|_{X_a} \boxtimes M_b$ which induces a monomorphism
\begin{align}\label{eqn:generic-action-pr-map}
\Gm \times \Gm \times V_{a,b} \to V_{a,b} \times_{\cX} V_{a,b}
\end{align}
given in the following way: Let $(r, s, f)$ be an $A$-point of the domain such that $f$ corresponds to the data of $\underline{f}\colon \Spec A \to B_{a,b}$ and an extension class $\mathfrak{a}$ represented by a short exact sequence
\begin{align}\label{eqn:ext-class}1 \to \underline{f}^{*}(M_a|_{X_a} \boxtimes M_b) \xrightarrow{\iota} E \xrightarrow{\pi} \underline{f}^{*}(\mathbf{D}(1)|_{X_a} \boxtimes M_b) \to 1. \end{align} Then, the image of $(r, s, f)$ is
\begin{itemize}
\item $f$ in the first coordinate,
\item $rs^{-1} \cdot f$ in the second coordinate (giving the action), where $rs^{-1}\cdot f$ is defined to correspond to the data of the map $\underline{f}: \Spec A \to B_{a,b}$ and the extension class $rs^{-1}\mathfrak{a}$, which is the equivalence class of the short exact sequence
\begin{align}\label{eqn:ext-class-scaled} 1 \to \underline{f}^{*}(M_a|_{X_a} \boxtimes M_b) \xrightarrow{r^{-1}s\iota} E \xrightarrow{\pi} \underline{f}^{*}(\mathbf{D}(1)|_{X_a} \boxtimes M_b) \to 1, \end{align} and
\item the unique automorphism of $E$ as a $(\varphi, \Gamma)$-module that induces the map from the sequence in \Cref{eqn:ext-class} to that in \Cref{eqn:ext-class-scaled} given by multiplication by $r$ on $\underline{f}^{*}(M_a|_{X_a} \boxtimes M_b)$ and $s$ on $\underline{f}^{*}(\mathbf{D}(1)|_{X_a} \boxtimes M_b)$. The uniqueness of this automorphism is a consequence of the fact that by definition of $X_a$, $\mathbf{D}(1)$ is not isomorphic to $M_a$ after restriction to any field-valued point of $X_a$, and therefore, not isomorphic to $M_a$ after any base change.
\end{itemize}
Note that $f_{a,b}(f) = E = f_{a,b}(rs^{-1} \cdot f)$, and so, along with the datum of the automorphism of $E$ in the third bullet, we do indeed get a point of the stacky fiber product $V_{a,b} \times_{\cX} V_{a,b}$.
\subsubsection*{Extensions of a character by itself}
Next, we consider the Herr complex $\mathcal{C}^{\bullet}(\mathbf{D}(1))$ associated to $\mathbf{D}(1)$ defined over $\F$. 
Each of the cohomology groups is a finite-dimensional vector space over $\mathbb{F}$. As before, the considerations in \cite[Sec.~5.4]{emerton2019moduli} show that for any finite type $\mathbb{F}$-algebra $R$, $$H^1(\mathcal{C}^{\bullet}(\mathbf{D}(1)_{R})) = H^1(\mathcal{C}^{\bullet}(\mathbf{D}(1)) \otimes R.$$ 
Thus $V_{1} = \Spec \Sym^{\bullet}(H^1(\mathcal{C}^{\bullet}(\mathbf{D}(1))^{\vee})$ parameterizes extensions of $\mathbf{D}(1)$ by itself, and for each $b \in \cA$, we can consider $V_{1,b}:= V_1 \times \Gm$ as parameterizing extensions of $\mathbf{D}(1) \boxtimes M_b$ by itself.
Similar to the definition of $f_{a,b}$, we can define a map $$f_{1,b}\colon V_{1,b} \xrightarrow{} \mathcal{X}$$
mapping extensions of pullbacks of $\mathbf{D}(1) \boxtimes M_b$ by themselves to the corresponding $(\varphi, \Gamma)$-modules. The analogue of $\pi_{a,b}$ in this setting is the map $\pi_{1,b}\colon V_{1,b} \to \Gm$ which is simply projection onto the second factor.
Departing a little from our construction in \Cref{eqn:generic-action-pr-map}, we specify a monomorphism
\begin{align}\label{eqn:borel-action}
\Gm \times \Gm \times \mathbb{G}_a \times V_{1,b} \to V_{1,b} \times_{\cX} V_{1,b},
\end{align}
by first making a choice for a basis of the \'etale $(\varphi, \Gamma)$-module corresponding to the map $f_{1,b}$. 


The universal point of $V_{1, b}$ corresponds to the map $\pi_{1,b}: V_{1,b} \to \Gm$ and a universal extension of $(\varphi, \Gamma)$-modules over $V_{1,b}$. Fix a representative short exact sequence of this universal extension:
$$1 \to \pi_{1,b}^{*}(\mathbf{D}(1) \boxtimes M_b) \xrightarrow{\iota} E \xrightarrow{\pi} \pi_{1,b}^{*}(\mathbf{D}(1) \boxtimes M_b) \to 1,$$
where $E$ is the $(\varphi, \Gamma)$-module corresponding to $f_{1,b}$.
Let $\cO(V_{1,b})$ be the ring of global functions on $V_{1,b}$. Since $\mathbf{D}(1) \boxtimes M_b$ is projective as a $\mathbf{A}_{K, \cO(V_{1,b})}$-module, the short exact sequence admits a splitting of $\mathbf{A}_{K, \cO(V_{1,b})}$-modules. Fix such a splitting and using it, fix a basis of $E$ as a rank $2$ free $\mathbf{A}_{K, \cO(V_{1,b})}$-module, with the first basis element being the image of a fixed free generator of $\mathbf{D}(1) \boxtimes M_b$ under the inclusion $\iota$ and the second basis element being the image of a fixed free generator of $\mathbf{D}(1) \boxtimes M_b$ under the splitting of $\pi$. Denote the basis by $\{e_1, e_2\}$. Suppose the action of $\varphi$ and $\gamma \in \Gamma$ with respect to this basis is given by upper triangular $2 \times 2$ matrices $M(\varphi)$ and $M(\gamma)$ respectively. Given a map $f \colon \Spec A \to V_{1, b}$, we can pull back along $f$ to obtain an \'etale $(\varphi, \Gamma)$-module $f^{*}E$ over $\Spec A$ with a basis $\{f^{*}e_{1}, f^{*}e_{2}\}$, and matrices $f^{*}M(\varphi)$ and $f^{*}M(\gamma)$ describing the action of $\varphi$ and $\gamma \in \Gamma$ on $f^{*}E$.

Armed with these choices, we now describe the map in \Cref{eqn:borel-action}. Let $(r,s,b,f)$ be an $A$-point of the domain. Suppose the map $f\colon \Spec A \to V_{1,b}$ corresponds to the isomorphism class of a tuple $(\underline{f}, \mathfrak{a})$, where $\underline{f}$ is a map $\Spec A \to \Gm$ and $$\mathfrak{a} \in \Ext^{1}_{(\varphi, \Gamma)/\mathbf{A}_{K, A}}(\mathbf{D}(1)_A\otimes_{\mathbf{A}_{K, A}} \underline{f}^{*}M_b, \mathbf{D}(1)_A\otimes_{\mathbf{A}_{K, A}} \underline{f}^{*}M_b)$$ is an extension class. Then the image of $(r,s,b,f)$ is given by 
\begin{itemize}
\item $f$ in the first coordinate, 
\item $f' = rs^{-1} \cdot f$ in the second coordinate, where $rs^{-1} \cdot f$ is defined to correspond to the tuple $(\underline{f}, rs^{-1}\mathfrak{a})$, and 
\item the datum of an isomorphism $f^{*}E \to f^{\prime *}E$ with respect to the basis $\{f^{*}e_{1}, f^{*}e_{2}\}$ of the domain and $\{f^{\prime *}e_{1}, f^{\prime *}e_{2}\}$ of the codomain given by the matrix
$$\begin{pmatrix}
    r & b \\
    0 & s
\end{pmatrix}.$$
\end{itemize}

\subsubsection*{Extensions of a character by its cyclotomic twist}
Finally, 
we consider the Herr complex associated to $\mathbf{D}(\epsilon)$, denoted $\mathcal{C}^{\bullet}(\mathbf{D}(\epsilon))$. 
 As before, viewing the finite-dimensional degree $1$ cohomology group as an invertible sheaf on a point, the total space gives a vector bundle $V_\epsilon$ defined over $\mathrm{Spec}\: \mathbb{F}$ that parameterizes extensions of $\mathbf{D}(1)$ by $\mathbf{D}(\epsilon)$.
For each $b \in \cA$, we can consider $V_{\epsilon, b}:= V_{\epsilon} \times \Gm$ as parameterizing extensions of $\mathbf{D}(1) \boxtimes M_b$ by $\mathbf{D}(\epsilon) \boxtimes M_b$. Thus, we have a map
$$f_{\epsilon,b} \colon V_{\epsilon,b} \xrightarrow{} \mathcal{X}$$
given by sending an extension of a pullback of $\mathbf{D}(1) \boxtimes M_b$ by a pullback along the same morphism of $\mathbf{D}(\epsilon) \boxtimes M_b$ to the corresponding \'etale $(\varphi, \Gamma)$-module. Let $$\pi_{\epsilon, b}\colon V_{\epsilon,b} \to \Gm$$ be the projection onto the second factor.
Exactly as in the setting of $\Cref{eqn:generic-action-pr-map}$, there exists a monomorphism 
\begin{align}\label{eqn:cyclotomic-action-pr-map}
\Gm \times \Gm \times V_{\epsilon,b} \to V_{\epsilon,b} \times_{\cX} V_{\epsilon,b}
\end{align}
given in precisely the same way.


\subsubsection*{Irreducible representations}
Next, we construct a version of the maps $f_{a,b}$, $f_{1,b}$ and $f_{\epsilon, b}$ to obtain the \'etale $(\varphi, \Gamma)$-modules corresponding to the irreducible mod $p$ representations of $G_K$.
For each irreducible $2$-dimensional representation $\rhobar$ defined over a finite extension $\mathbb{F}'$ of $\F$,
there exists a map $\mathbf{D}(\rhobar) \colon \Spec\F' \to \mathcal{X}$. In order to capture the unramified twists of $\rhobar$, we construct a map
\begin{align}\label{defn:irreducible-maps}
f_{\rhobar}: \mathbb{G}_{m,\F'} \to \cX_{\F'} \to \mathcal{X}
\end{align}
corresponding to $\mathbf{D}(\rhobar) \boxtimes M$, where $M$ is the rank $1$ \'etale $(\varphi, \Gamma)$-module we constructed earlier to interpolate the unramified characters of $G_K$. There exists a monomorphism
\begin{align}\label{eqn:irred-action-pr}
\mathbb{G}_{m,\F'} \times_{\F'} \mathbb{G}_{m,\F'} \to \mathbb{G}_{m,\F'} \times_{\cX_{\F'}} \mathbb{G}_{m,\F'}
\end{align}
given by mapping a point $(r,s)$ of the domain to $s$ in both the first and second coordinates of the codomain and the automorphism of the $(\varphi, \Gamma)$-module corresponding to $f_{\rhobar} \circ s$ given by scalar multiplication by $r$.


\begin{lemma}\label{finite-type-points-fiber}
Suppose $\F''$ is a finite extension of $\F$ (resp. of $\F'$ appearing in \Cref{eqn:irred-action-pr}), then\Cref{eqn:generic-action-pr-map,eqn:borel-action,eqn:cyclotomic-action-pr-map} (resp. \Cref{eqn:irred-action-pr}) induce bijections on finite type points.
\end{lemma}
\begin{proof}
This is easily verified on applying the functor $T_{\F''}$ defined in \cite[Sec.~3.6]{emerton2019moduli} to pass from projective \'etale $(\varphi, \Gamma)$-modules with $\F''$-coefficients to the equivalent category of $G_K$-representations with $\F''$-coefficients. 
\end{proof}

\subsubsection*{Top-dimensional irreducible components in \texorpdfstring{$\cap_i \cX_{\sigma_i}$}{X-sigma-cap-X-tau}}
Let $\{\sigma_i\}_{i \in I}$ be a fixed set of non-Steinberg, pairwise non-isomorphic Serre weights. Let $$\mathcal{E} := \bigcap_{i \in I} \mathcal{X}_{\sigma_i}.$$ By \cite[Thm.~1.2]{caraiani2022geometric}, the set $|\cE|$ is precisely the set of isomorphism classes of $G_K$-representations 
$$\{\rhobar\colon G_K \to \GL_2(\Fbar)| \; \forall i \in I, \sigma_i \in W(\rhobar)\}.$$ From \Cref{inv-L-sigma}, it follows that any maximal family of representations contained in $|\cE|$ has a well-defined dimension.


	\begin{theorem}
		\label{geometry-proposition-dimension-intersection}
		Let $d\geq 0$. Suppose all maximal families of representations contained in $|\mathcal{E}|$ are of dimension $\leq d$, and moreover, $|\mathcal{E}|$ contains at least one maximal family of dimension $d$. Then the following are true:
		\begin{enumerate}
		    \item $\mathcal{E}$ has dimension $d$.
		    \item If $d>0$, the number of $d$-dimensional components in $\mathcal{E}$ equals the number of $d$-dimensional pairwise separated maximal families contained in $\mathcal{E}$.
		    \item Let $d=0$, and let
		    $$C := \{\rhobar\colon G_K \to \GL_2(\mathbb{F}) \: | \: \rhobar \text{ is semisimple}\}/\sim$$ where $\rhobar \sim \rhobar'$ if $\rhobar$ and $\rhobar'$ are isomorphic as $I_K$-representations. Then the number of $d$-dimensional components in $\mathcal{E}$ equals $|C|$.
		\end{enumerate}
	\end{theorem}

The proof of this proposition will be presented after \Cref{exclude-contr-of-triv}. First, for each $a \in \cA$, let $Y_{a,b} := \mathcal{E} \times_{\mathcal{X}} V_{a,b}$. We also define  $Y_{1,b} := \mathcal{E} \times_{\mathcal{X}} V_{1, b}$ and $Y_{\epsilon,b} := \mathcal{E} \times_{\mathcal{X}} V_{\epsilon,b}$.

\begin{proposition}\label{geometry-lemma-dimension-max}
        Let $\chi_1$ and $\chi_2$ be fixed $G_K$-characters. Suppose $|\mathcal{E}|$ contains a maximal family $\mathcal{F}_{\chi_1, \chi_2}$ of representations of dimension $d$.
        Let $a, b \in \cA$ be such that $\psi_b$ is an unramified twist of $\chi_2$, while $\psi_a \otimes \psi_b$ is an unramified twist of $\chi_1$. Then the scheme-theoretic image of $Y_{a,b}$ under $f_{a,b}$ has dimension $d$ and the number of $d$-dimensional irreducible components in this scheme-theoretic image is exactly one.
        
\end{proposition}

\begin{proof}
Let $q$ be a closed point of $B_{a,b}$, and after fixing an embedding $\kappa(q) \hookrightarrow \Fbar$, let $\overline{q}$ be the corresponding $\Fbar$-point of $B_{a,b}$.  
By \cite[Thm.~1.2]{caraiani2022geometric},
the set $(Y_{a,b})_{\overline{q}}(\Fbar)$ equals $$\bigcap_{i \in I} L_{\sigma_i}(\psi_a \otimes \psi_b \otimes \mathrm{ur}_{\overline{q}(x)}, \psi_b \otimes \mathrm{ur}_{\overline{q}(y)})$$ 
and in particular, forms a vector space. By the construction of $B_{a,b}$, representations corresponding to the \'etale ($\varphi, \Gamma$)-modules obtained by pushing forward $Y_{a,b} (\Fbar)$ under $f_{a,b}$ are never an extension of a character by itself, and therefore, by \Cref{inv-L-sigma}, $\dim_{\Fbar} (Y_{a,b})_{\overline{q}}(\Fbar)$ does not depend on the choice of $q$ or $\overline{q}$. Since $|\mathcal{E}|$ contains a maximal family $\mathcal{F}_{\chi_1, \chi_2}$ of representations of dimension $d$, $\dim_{\Fbar} (Y_{a,b})_{\overline{q}}(\Fbar) = d$. This implies furthermore that $\pi_{a,b}(|Y_{a,b}|) = |B_{a,b}|$.

Since the $\Fbar$-points of $(Y_{a,b})_{\overline{q}}$ form a $d$-dimensional vector space, the reduced induced closed subscheme of $(Y_{a,b})_q$ must be cut out by homogeneous linear equations in $V_{a,b} \times_{B_{a,b}} \kappa(q)$ and thus be irreducible of dimension $d$.


Let $S$ be an irreducible component of $Y_{a, b}$. Denote by $\overline{f_{a,b}(S)}$ and $\overline{\pi_{a,b}(S)}$ the scheme-theoretic images of $S$ under $f_{a,b}$ and $\pi_{a,b}$ respectively. By \cite[\href{https://stacks.math.columbia.edu/tag/0DS4}{Tag 0DS4}]{stacks-project}, there exists a dense open $U  \subset S$ such that for any $p \in U(\Fbar)$, 
\begin{align}\label{eqn:fiber-dim-eqn-1}
    \dim \overline{f_{a,b}(S)} = 
    \dim S - \dim_p(S_{f_{a,b}(p)})
\end{align} and 
\begin{align}\label{eqn:fiber-dim-eqn-2}
\dim (\overline{\pi_{a,b}(S)}) = 
\dim S - \dim_{p} (S_{\pi_{a,b}(p)}).\end{align}



 Fix $p \in U(\Fbar)$. Since $(Y_{a,b})_{\pi_{a,b}(p)}$ is irreducible, its reduced induced closed subscheme $((Y_{a,b})_{\pi_{a,b}(p)})^{\text{red}}$  is contained entirely in some irreducible component of $Y_{a,b}$. Restrict $U$ if necessary so that it is disjoint from all irreducible components of $Y_{a, b}$ except $S$. We may thus assume that $((Y_{a,b})_{\pi_{a,b}(p)})^{\text{red}} \subset S$ and $\dim (Y_{a,b})_{\pi_{a,b}(p)} = \dim S_{\pi_{a,b}(p)} = d$ . 
 
 If $r \in \Fbar^{\times}$, then $f_{a,b}(p)$ and $f_{a,b}(r\cdot p)$ are isomorphic $(\varphi, \Gamma)$-modules. Therefore, $r\cdot p \in V_{a,b}(\Fbar)$ is also a point of $Y_{a,b}$.
 Thus, there exists a monomorphism
\begin{align}\label{eqn:generic-fiber-dim} \Gm \times \Gm \times \kappa(p)\to Y_{a,b} \times_{\cX} \kappa(p) \cong (Y_{a,b})_{f_{a,b}(p)}\end{align} induced by \Cref{eqn:generic-action-pr-map}, which is a bijection on finite type points as in \Cref{finite-type-points-fiber}. The existence of \Cref{eqn:generic-fiber-dim} uses the fact that the finite type points of the domain land in $(Y_{a,b})_{f_{a,b}(p)}$ and the domain, being reduced, is the Zariski closure of its finite type points. The finite type points of $(Y_{a,b})_{f_{a,b}(p)}$ are evidently contained in the finite type points of $((Y_{a,b})_{\pi_{a,b}(p)})_{f_{a,b}(p)}$, which in turn are contained in $S_{f_{a,b}(p)}$. Therefore the map in \Cref{eqn:generic-fiber-dim} can be factored as
$$\Gm \times \Gm \times \kappa(\mathfrak{a})\to S_{f_{a,b}(p)} \hookrightarrow (Y_{a,b})_{f_{a,b}(p)}$$ where both the arrows are monomorphisms and bijective on finite type points. Using \cite[\href{https://stacks.math.columbia.edu/tag/0DS4}{Tag 0DS4}]{stacks-project}, we infer that $\dim S_{f_{a,b}(p)} = 2$.

 
We conclude from \Cref{eqn:fiber-dim-eqn-1,eqn:fiber-dim-eqn-2} that  
\begin{align*}\dim \overline{f_{a,b}(S)} = d - (2 - \dim \overline{\pi_{a,b}(S)}) \leq d\end{align*}
and $$\dim \overline{f_{a,b}(S)} = d \iff \dim \overline{\pi_{a,b}(S)} = 2.$$
Since $\pi_{a,b}(|Y_{a,b}|) = |B_{a,b}|$ as noted earlier, for at least one of the finitely many irreducible components of $Y_{a,b}$, the scheme theoretic image under $\pi_{a,b}$ has dimension $2$, showing that its scheme theoretic image under $f_{a,b}$ has dimension $d$.

Next, we show that there exists exactly one irreducible component of $Y_{a,b}$ whose scheme theoretic image under $\pi_{a,b}$ has dimension $2$.
For $i \in \{1, 2\}$, suppose $S^i$ is an irreducible component of $Y_{a,b}$ so that $\overline{\pi_{a,b}(S)}$ has dimension $2$. Let $U^i$ be the dense open subscheme of $S^i$ obtained by taking the complement of all other irreducible components of $Y_{a,b}$. Therefore, $\overline{\pi_{a,b}(U^i)} = \overline{\pi_{a,b}(S^i)} = B_{a,b}$. Since $\pi_{a,b}(U^i)$ is constructible, it contains a dense open $W^i$ of $B_{a,b}$. Let $W = W^1 \cap W^2$. If $q$ is a closed point of $W$, the  irreducible scheme $((Y_{a,b})_q)^{\red}$ is contained entirely in at least one irreducible component of $Y_{a,b}$. But since for each $i$, $(U^i)_q$ is a non-empty locally closed subscheme of $(Y_{a,b})_q$ that is disjoint from all irreducible components of $Y_{a,b}$ apart from $S^i$, $S^1$ must be the same as $S^2$. 

Thus, there exists exactly one irreducible component of dimension $d$ in the scheme-theoretic image of $Y_{a,b}$ under $f_{a,b}$. \end{proof}



\begin{proposition}\label{exclude-contr-of-triv}

        Suppose all maximal families of representations contained in $|\mathcal{E}|$ have dimension $\leq d$. 
        Then, for each $b \in \cA$, the scheme theoretic images of $Y_{1,b}$ and $Y_{\epsilon, b}$ under  $f_{1, b}$ and $f_{\epsilon, b}$ respectively have dimension strictly less than $d$.
\end{proposition}
\begin{proof}The proof follows the same ideas as the proof of \Cref{geometry-lemma-dimension-max}. The reduction in dimension for the scheme-theoretic image of $f_{1, b}|_{Y_{1,b}}$ arises from the fact that the codomain of $\pi_{1, b}: Y_{1,b} \to \Gm$ has dimension $1$ less than the codomain of $\pi_{a,b}$ along with the fact that the fibers of the map $f_{1,b}|_{Y_{1,b}}$ are $3$-dimensional, using \Cref{eqn:borel-action}.

The reduction in dimension for the scheme-theoretic image of $f_{\epsilon, b}|_{Y_{\epsilon, b}}$ arises from the fact that the codomain of the map $\pi_{\epsilon, b}: V_{\epsilon,b} \to \Gm$ has dimension $1$ less than the codomain of $\pi_{a,b}$. 

We leave the details to the reader.
\end{proof}

\begin{proof}[Proof of \Cref{geometry-proposition-dimension-intersection}] By construction, each finite type point of $\cE$ corresponding to a reducible representation is in the image of one of the (finitely many) $f_{a, b}|_{Y_{a,b}}$, $f_{1, b}|_{Y_{1,b}}$ and  $f_{\epsilon, b}|_{Y_{\epsilon, b}}$ maps. Further, each finite type point of $\mathcal{E}$ corresponding to an irreducible representation is in the image of a map $f_{\rhobar}$ where $\rhobar$ is an irreducible representation. Since there are finitely many irreducible representations of $G_K$ with $\Fbar$-coefficients upto unramified twists, one can take a finite set of irreducible representations $I_{\cE}$ so that the underlying topological space of the union of the scheme-theoretic images of $f_{\rhobar}$ for $\rhobar \in I_{\cE}$ contains all the irreducible representations in $|\cE|$.
By an application of \cite[\href{https://stacks.math.columbia.edu/tag/0DS4}{Tag 0DS4}]{stacks-project} and using \Cref{finite-type-points-fiber} to calculate fiber dimension, the maps $f_{\rhobar}$ have irreducible scheme-theoretic images of dimension $0$. Thus, $\dim \cE$ is the maximum of the dimensions of the scheme-theoretic images of the various $f_{a, b}|_{Y_{a,b}}$, $f_{1, b}|_{Y_{1,b}}$ and  $f_{\epsilon, b}|_{Y_{\epsilon, b}}$ maps. 

The first statement then follows from \Cref{geometry-lemma-dimension-max,exclude-contr-of-triv}, which also show that each $d$-dimensional family contains precisely one $d$-dimensional component in its closure.

Now assume that $\mathcal{Y}$ is a $d$-dimensional irreducible component of $\mathcal{E}$ contained in the closure of two $d$-dimensional maximal families $\mathcal{F}_{\chi_1, \chi_2}$ and $\mathcal{F}_{\chi'_1, \chi'_2}$. Then there exist unique $a,a',b, b' \in A$ so that $(\psi_a, \psi_b)$ and $(\psi_{a'}, \psi_{b'})$ are unramified twists of $(\chi_1, \chi_2)$ and $(\chi'_1, \chi'_2)$ respectively. Therefore, $\mathcal{Y}$ is the scheme-theoretic image of an irreducible component $S$ of $Y_{a,b}$ as well as an irreducible component $S'$ of $Y_{a', b'}$. 

Since the images of $|S|$ and of $|S'|$ are constructible sets dense in $|\mathcal{Y}|$, there exists a dense open $V$ of $|\mathcal{Y}|$ contained in the image of both $|S|$ and $|S'|$. If $(a,b) \neq (a', b')$, then this means that $(a,b) = (b',a')$ and $V$ contains only split reducible representations. Therefore, families of split reducible representations are dense in $\mathcal{Y}$. 

Consider the closed subschemes $W$ and $W'$ of $Y_{a, b}$ and $Y_{a',b'}$ respectively obtained by setting the extension classes to $0$. Clearly, $V$ is in the images of both $|W|$ and $|W'|$, and therefore $\cY$ is the scheme-theoretic image of both $W$ and $W'$ under $f_{a,b}$ and $f_{a',b'}$ respectively. For each $\overline{q} \in B_{a,b}(\Fbar)$ and $\bar{q}' \in B_{a',b'}(\Fbar)$, $W_{\bar{q}}(\Fbar)$ and $W'_{\bar{q}'}(\Fbar)$ are $\Fbar$-vector space of dimension $0$. Thus, arguing exactly as in the proof of \Cref{geometry-lemma-dimension-max}, for each closed point $q$ of $B_{a,b}$ and $q'$ of $B_{a',b'}$, $\dim W_{q} = \dim W'_{q'} = 0$, and as a result, the scheme theoretic image of $W$ under $f_{a,b}$, which is also the scheme-theoretic image of $W'$ under $f_{a',b'}$, has dimension $0$. If $d>0$, this is an impossibility as $\cY$ has dimension $d$. This settles the second statement.

Finally, if $d=0$, then since $\cE$ doesn't contain a family of dimension $>0$, $W=Y_{a,b}$ and $W' = Y_{a',b'}$. Maximality of $\cF_{\chi_1, \chi_2}$ and $\cF_{\chi'_1, \chi'_2}$ forces them to be the same family, that is uniquely identified by taking the isomorphism class of $\rhobar|_{I_K}$ where $\rhobar$ is any (reducible) point in this family. Next, suppose $\rhobar'$ and $\rhobar''$ are irreducible $G_{K}$-representations defined over finite field extensions $\F'$ and $\F''$ of $\F$ respectively. Suppose, further, that the $0$-dimensional, irreducible scheme-theoretic image $\cY$ of $f_{\rhobar'}$ is also the same as that of $f_{\rhobar''}$. As before, since $f_{\rhobar'}(|\mathbb{G}_{m, \F'}|)$ and $f_{\rhobar''}(|\mathbb{G}_{m, \F''}|)$ is constructible, there exists a dense open $V$ in $|\cY|$ which is contained in $f_{\rhobar'}(|\mathbb{G}_{m, \F'}|) \cap f_{\rhobar''}(|\mathbb{G}_{m, \F''}|)$. Therefore, $\rhobar'$ is an unramified twist of $\rhobar''$, or equivalently, $\rhobar' \sim \rhobar''$. On the other hand, by construction, if $\rhobar' \sim \rhobar''$, then $f_{\rhobar'}(|\mathbb{G}_{m, \F'}|) = f_{\rhobar''}(|\mathbb{G}_{m, \F''}|)$, showing that $f_{\rhobar'}$ and $f_{\rhobar''}$ have the same scheme-theoretic images. Using essentially the same argument, for any $a,b \in \cA$ and any irreducible $\rhobar$, the scheme-theoretic image of any irreducible component of $Y_{a,b}$ under $f_{a,b}$ is necessarily not isomorphic to that of $f_{\rhobar}$. This settles the third statement.
\end{proof}

\section{Serre weights of \texorpdfstring{$G_K$}{GK}-representations}\label{serre-weights}
In this section, the field of coefficients for all $I_K$ and $G_K$-representations will be $\Fbar$, and we will omit the base field from the notations. If $\chi_1, \chi_2$ are $G_K$-characters, then for any Serre weight $\sigma$, explicit recipes to compute the set $L_{\sigma}(\chi_1, \chi_2)$ (defined in \Cref{defn:L-sigma}) were provided by \cite{ddr} in the unramified setting and later, by \cite{steinmetz} in the general setting. We will recall these recipes 
and compute conditions for $L_{\sigma}(\chi_1, \chi_2)$ to equal the entire space of extensions or be of codimension $1$ in it. When $K=\Qp$, we will recall a criterion from \cite{gls} that determines when an irreducible representation $\rhobar \colon G_K \to GL_2(\Fbar)$ satisfies $\sigma \in W(\rhobar)$.

\subsection{Serre weights of reducible representations}
For this subsection, let $$\chi_1, \chi_2 \colon I_K \to \Fbar^{\times}$$ be fixed characters and let $\ts$ be a fixed Serre weight. Suppose $\chi_1$ and $\chi_2$ admit extensions to $G_K$-characters. This is equivalent to each of $\chi_1$ and $\chi_2$ being an exponent of $\omega_i$ for some (any) $i \in \Z/f\Z$. Fix $G_K$-characters $\tilde{\chi}_1$ and $\tilde{\chi}_2$ satisfying $\tilde{\chi}_1|_{I_K} = \chi_1$, $\tilde{\chi}_2|_{I_K} = \chi_2$. 

We note first that using Tate local duality and Euler characteristic formula, 
\begin{align*}
    \dim \Ext^{1}_{G_K}(\tilde{\chi}_1, \tilde{\chi}_2) =\begin{cases}
    [K:\Qp] &\text{ if } \tilde{\chi}_2^{-1}\tilde{\chi}_1 \not\in \{1, \epsilon\},\\
    [K:\Qp] + 1 &\text{ otherwise}
    \end{cases}
\end{align*} where $\epsilon$ denotes the mod $p$ cyclotomic character.  

Let $\rhobar \colon G_K \to \GL_2(\Fbar)$ be a $G_K$-representation. Then $\ts \in W(\rhobar)$ if and only if  $\sigma_{\mathbf{t}, \mathbf{s}}^{\vee}$ is a Serre weight of $\rhobar^{\vee}$ in the sense of \cite{gls,ddr,steinmetz}. Therefore, our definition of $L_{\ts}(\tilde{\chi}_1, \tilde{\chi}_2) \subset \Ext^{1}_{G_K}(\tilde{\chi}_1, \tilde{\chi}_2)$ is not the same as the space denoted $L_{\ts}$ in \cite{ddr} where the data of the $G_K$-characters $\tilde{\chi}_1, \tilde{\chi}_2$ is suppressed from the notation, or the space denoted $L_{\ts}(\tilde{\chi}_1, \tilde{\chi}_2)$ in \cite{steinmetz}. To make a distinction, we will denote the aforementioned objects appearing in \cite{ddr,steinmetz} by $L_{\ts}^{*}(\tilde{\chi}_1, \tilde{\chi}_2)$ instead. We have
\begin{align}\label{notation-dual}
\mathfrak{a} \in L_{\ts}(\tilde{\chi}_2^{-1}, \tilde{\chi}_1^{-1}) \iff \mathfrak{a}^{\vee} \in L_{\ts}^{*}(\tilde{\chi}_1, \tilde{\chi}_2) \end{align}
where the extension class $\mathfrak{a}^{\vee}$ is obtained by taking the equivalence class of the dual of a short exact sequence representing $\mathfrak{a}$.

\begin{lemma}\label{necessary-but-not-suff}
The set $L^{*}_{\ts}(\tilde{\chi}_1, \tilde{\chi}_2)$ is non-empty if and only if there exists a subset $J \subset \Z/f\Z$, and for each $i \in \Z/f\Z$ there exists $x_i \in [0, e-1]$ such that	
	\begin{align}\label{screening-1}
		\chi_1 &= \prod_{i \in \Z/f\Z} \omega_i^{t_i} \prod_{i \in J} \omega_i^{s_i + 1 + x_i} \prod_{i \not\in J} \omega_i^{x_i}, \text{ and}\\
	\label{screening-2}
		\chi_2 &= \prod_{i \in \Z/f\Z} \omega_i^{t_i} \prod_{i \in J} \omega_i^{e - 1 - x_i} \prod_{i \not\in J} \omega_i^{s_i + e - x_i}
	\end{align}
\end{lemma}
\begin{proof}
This is the content of the remark following \cite[Defn.~4.1.4]{gls}.
\end{proof}

\subsubsection*{A basis of \texorpdfstring{$L^{*}_{\ts}(\tilde{\chi}_1, \tilde{\chi}_2)$ using \cite{steinmetz}}{Lsigma}.}
Now, suppose $L^{*}_{\ts}(\tilde{\chi}_1, \tilde{\chi}_2) \neq \varnothing$. What follows is the recipe in \cite{steinmetz} for writing down a basis of $L^{*}_{\ts}(\tilde{\chi}_1, \tilde{\chi}_2)$, although we change the symbols used. Refer to \textit{loc. cit.} for justifications of the statements and more details. We first write 
\begin{align}\label{defn:m_i}
\chi_2 = \prod_{i \in \Z/f\Z} \omega_i^{t_i} \prod_{i \in \Z/f\Z} \omega_i^{m_i}\end{align} with each $m_i \in [0, p-1]$ and not all $m_i$ equal to $p-1$. This requirement uniquely determines $m_i$ for each $i$.
Let $\mathcal{S}$ be the necessarily non-empty set of $f$-tuples of non-negative integers $(a_0, \dots, a_{f-1})$ indexed over $\Z/f\Z$ and
satisfying $$\chi_2 = \prod_{i \in \Z/f\Z} \omega_i^{t_i} \prod_{i \in \Z/f\Z} \omega_i^{a_i}$$ and $a_i \in [0, e-1] \cup [s_i + 1, s_i + e]$ for all $i$. 

For $i \neq f-1$, let $v_{i}$ be the $f$-tuple $(0, \dots, 0, -1, p, 0, \dots, 0)$ with $-1$ in $i$ position, $p$ in $i+1$ position and $0$ everywhere else. Let $v_{f-1}$ be $(p, 0, \dots, 0, -1)$. Then there exists a subset $A \subset \Z/f\Z$ such that 
\begin{align}\label{ramified-condition-J}
	(m_0, \dots, m_{f-1}) + \sum_{i \in A} v_i \in \mathcal{S}
\end{align}
There exists a minimal subset of $\Z/f\Z$ that is contained in any other subset $A \subset \Z/f\Z$ satisfying \Cref{ramified-condition-J}. 

\begin{definition}\label{maximal-model}
	Define $A_{\min}$ to be the minimal $A$ satisfying \Cref{ramified-condition-J}.
\end{definition}

\begin{definition}\label{definition-y-z}
	Given $(m_0, \dots, m_{f-1})$ and $A_{\min}$ as above. Define
	\begin{align*}
		(y_{0}, \dots, y_{f-1}) &:= (m_{0}, \dots, m_{f-1}) + \sum_{i \in A_{\min}} v_i \in \mathcal{S}
	\end{align*}
 and for each $i \in \Z/f\Z$,
 \begin{align*}
		z_i &:= s_i + e - y_i.
 \end{align*}
\end{definition}

\begin{remark}\label{chi-terms-y-z} We have 
\begin{align*}
    \chi_1 &=\prod_{i \in \Z/f\Z} \omega_i^{t_i} \prod_{i \in \Z/f\Z} \omega_i^{z_i} ,\\
    \chi_2 &=\prod_{i \in \Z/f\Z} \omega_i^{t_i} \prod_{i \in \Z/f\Z} \omega_i^{y_i} , \: \text{ and} \\
    \chi_2^{-1} \chi_1&= \prod_{i \in \Z/f\Z} \omega_i^{z_i - y_i}.
\end{align*}
\end{remark}

\begin{definition} \label{definition-interval}
Define
\begin{align*}
    \cI_i := \begin{cases}
    [0, z_i - 1] &\text{ if } y_i \geq s_i + 1, \\
    \{y_i\} \cup [s_i + 1, z_i - 1] &\text{ if } y_i < s_i + 1.
    \end{cases}
\end{align*}
 Here, an interval $[a, b] \subset \Z$ is interpreted to be empty if $b < a$. 
\end{definition}
\begin{remark}
    When $e=1$, $\mathcal{I}_i = \{0\}$ if $y_i = 0$ and $\mathcal{I}_i = \varnothing$ if $y_i = s_i + 1$. 
\end{remark}

\begin{remark} \label{intervals} 
Note that for each $i \in \Z/f\Z$, $|\cI_i| \leq e$. Furthermore,
\begin{align*}
|\cI_i| = e &\iff y_i = 0. \\
|\cI_i| = e- 1 &\iff \begin{cases}
y_i = s_i + 1, &\quad \text{ if } e=1,\\
y_i \in \{1, s_i + 1\} &\quad \text{ if } e>1. 
 \end{cases}
\end{align*}

\end{remark}

    


\begin{definition}\label{definition-xi}
	For each $i \in \Z/f\Z$, let
	\begin{align}
		\lambda_i &:= \sum_{j=0}^{f-1}p^{j}(z_{i-j} - y_{i-j}) \in \Z \\
		\xi_i &:= (p^f - 1)z_i + \lambda_i \in \Z
\end{align}
\end{definition}


Suppose $\chi_2^{-1} \chi_1 = \prod_{i \in \Z/f\Z} \omega_i^{a_i}$ for $a_i \in [1, p]$ and not all $a_i = p$. 
We will extend the indices of the $a_i$ from $\Z/f\Z$ to all of $\mathbb{Z}$ by setting  $a_{j} = a_{j'}$ if $j \equiv j' \mod f$. 
We call the tuple $(a_{0}, \dots, a_{f-1})$ the \textit{tame signature} of $\chi_2^{-1} \chi_1$. The arithmetic Frobenius element $\mathit{Frob}_k \in \Gal(k/\mathbb{F}_p)$ acts on $f$-tuples $(a_{0}, \dots, a_{f-1})$ via 
\begin{align}
	\mathit{Frob}_k \cdot (a_{0}, \dots, a_{f-1}) = (a_1, \dots, a_{f-1}, a_0).
\end{align}

 Let $f'$ be the cardinality of the orbit of $(a_{0}, \dots, a_{f-1})$ under the action of $\langle\mathit{Frob}\rangle$, and let $f'' := f/f'$.

Let $i \in \Z/f\Z$ and $u \in \cI_i$. Let $\nu$ be the $p$-adic valuation of $\xi_i - u(p^f - 1)$. We let 
\begin{align}\label{defn:alpha-i-u}
\alpha(i, u) := (n, \kappa) \in \mathbb{Z} \times [0, f''-1]
\end{align} where 
\begin{align}\label{defn:m}
			n = \frac{\xi_i - u(p^f - 1)}{p^{\nu}}
		\end{align}
and $\kappa$ is defined as follows: if $i_n \in [0, f'-1]$ is such that $i_n \equiv i - \nu \mod f',$ then 
 \begin{align}\label{formula-for-kappa}\kappa \equiv \frac{i - \nu - i_n}{f'} \mod f''.\end{align}
 
 \begin{definition}\label{definition-J-V}
We define 
$$J^{\mathrm{AH}}_{\sigma_{\mathbf{t}, \mathbf{s}}}(\chi_1, \chi_2):=\{\alpha(i, u) \: | \: i \in \Z/f\Z, u \in \cI_i\}.$$ 

	
		
\end{definition}
We emphasize that $J^{\mathrm{AH}}_{\sigma_{\mathbf{t}, \mathbf{s}}}(\chi_1, \chi_2)$ is defined if and only if \Cref{screening-1,screening-2} are true.

By \cite[Thm.~3.16]{steinmetz}, each $\alpha \in J^{\mathrm{AH}}_{\sigma_{\mathbf{t}, \mathbf{s}}}(\chi_1, \chi_2)$ specifies a unique element $c_{\alpha}$ of a basis of $L^{*}_{\sigma_{\mathbf{t}, \mathbf{s}}}(\tilde{\chi}_1, \tilde{\chi}_2)$. 
The space $L^{*}_{\sigma_{\mathbf{t}, \mathbf{s}}}(\tilde{\chi}_1, \tilde{\chi}_2)$ is the span of 
$$\{c_{\alpha} \:|\: \alpha \in J^{\mathrm{AH}}_{\sigma_{\mathbf{t}, \mathbf{s}}}(\chi_1, \chi_2)\}$$
together with additional, distinguished basis elements 
$c_{\text{un}}$ if $\tilde{\chi}_2^{-1} \tilde{\chi}_1 =1$, and
$c_{\text{tr}}$ if $\tilde{\chi}_2^{-1} \tilde{\chi}_1 = \epsilon$, $\prod_{i \in \Z/f\Z} \omega^{-t_i} \otimes\chi_2 = 1$ and $s_i = p-1$ for all $i$.
Thus, whenever $\tilde{\chi}_2^{-1} \tilde{\chi}_1 \not\in \{1, \epsilon\}$,
\begin{align}\label{dim-L}
	\dim L^{*}_{\sigma_{\mathbf{t}, \mathbf{s}}}(\tilde{\chi}_1, \tilde{\chi}_2)
	= |J^{\mathrm{AH}}_{\sigma_{\mathbf{t}, \mathbf{s}}}(\chi_1, \chi_2)| 
 = \sum_{i \in \Z/f\Z} |\mathcal{I}_i|. 
\end{align}

Let $\sigma_{\mathbf{t'}, \mathbf{s'}}$ be another Serre weight, not isomorphic to $\ts$. If $L^{*}_{\sigma_{\mathbf{t'}, \mathbf{s'}}}(\tilde{\chi}_1, \tilde{\chi}_2)$ is also non-empty, then
a basis for $L^{*}_{\sigma_{\mathbf{t}, \mathbf{s}}}(\tilde{\chi}_1, \tilde{\chi}_2) \cap L^{*}_{\sigma_{\mathbf{t'}, \mathbf{s'}}}(\tilde{\chi}_1, \tilde{\chi}_2)$ 
is given by
\begin{align}\label{basis-intersection}
\{c_{\alpha} \:|\: \alpha \in J^{\mathrm{AH}}_{\sigma_{\mathbf{t}, \mathbf{s}}}(\chi_1, \chi_2) \cap J^{\mathrm{AH}}_{\sigma_{\mathbf{t'}, \mathbf{s'}}}(\chi_1, \chi_2)\}
\end{align}
together with $c_{\text{un}}$ if $\tilde{\chi}_2^{-1} \tilde{\chi}_1$ is trivial. The basis element $c_{\text{tr}}$ doesn't come into the picture because if it was contained in $L^{*}_{\sigma_{\mathbf{t}, \mathbf{s}}}(\tilde{\chi}_1, \tilde{\chi}_2) \cap L^{*}_{\sigma_{\mathbf{t'}, \mathbf{s'}}}(\tilde{\chi}_1, \tilde{\chi}_2)$, then $\ts$ and $\tss$ would be forced to be isomorphic, a contradiction.
In particular, whenever $\tilde{\chi}_2^{-1} \tilde{\chi}_1 \not=1$,
\begin{align}\label{dim-L-cap}
	\dim L^{*}_{\sigma_{\mathbf{t}, \mathbf{s}}}(\tilde{\chi}_1, \tilde{\chi}_2) \cap L^{*}_{\sigma_{\mathbf{t'}, \mathbf{s'}}}(\tilde{\chi}_1, \tilde{\chi}_2)
	= |J^{\mathrm{AH}}_{\sigma_{\mathbf{t}, \mathbf{s}}}(\chi_1, \chi_2) \cap J^{\mathrm{AH}}_{\tss}(\chi_1, \chi_2)|. 
\end{align}

\begin{proposition}\label{ramified-full-dim}
	Suppose $\ts$ is non-Steinberg. Then $$|J^{\mathrm{AH}}_{\sigma_{\mathbf{t}, \mathbf{s}}}(\chi_1, \chi_2)| = ef$$ if and only if 
 \begin{align}\label{eqn:highest-wt-criterion}
 \chi_1 &= \prod_{i \in \Z/f\Z} \omega_i^{s_i + e} \prod_{i \in \Z/f\Z} \omega_i^{t_i}, \text{ and} \nonumber\\ \chi_2 &= \prod_{i \in \Z/f\Z} \omega_i^{t_i}.  \end{align}
\end{proposition}
\begin{proof}
By \Cref{dim-L} and \Cref{intervals}, $|J^{\mathrm{AH}}_{\sigma_{\mathbf{t}, \mathbf{s}}}(\chi_1, \chi_2)| = ef$ if and only if for each $i \in \Z/f\Z$, $$|\cI_i| = e \iff y_i = 0.$$ Thus, \Cref{chi-terms-y-z} shows that $|J^{\mathrm{AH}}_{\sigma_{\mathbf{t}, \mathbf{s}}}(\chi_1, \chi_2)| = ef$ implies \Cref{eqn:highest-wt-criterion}. On the other hand, if $\chi_1$ and $\chi_2$ satisfy \Cref{eqn:highest-wt-criterion}, then $A_{\mathrm{min}}$ is seen to be the empty set and we obtain $y_i =0$ for each $i$ as desired.
\end{proof}

\begin{definition}\label{defn:highest-wt}
If $\ts$ is non-Steinberg and the conditions in \Cref{eqn:highest-wt-criterion} are satisfied, we say that $\sigma_{\mathbf{t}, {\mathbf{s}}}$ is the \textit{highest weight} for the pair $(\chi_1, \chi_2)$.
\end{definition}
Note that because of the requirement that the highest weight be non-Steinberg, the highest weight for any pair of $G_K$-characters is well defined. Moreover, the highest weight only depends on the restrictions of the $G_K$-characters to $I_K$.


\begin{proposition}\label{ramified-codim-1}
	Suppose $\ts$ is non-Steinberg. Then $$|J^{\mathrm{AH}}_{\ts}(\chi_1, \chi_2)| = ef-1$$ if and only if one of the following conditions is satisfied:
	\begin{enumerate}
		\item \label{ramified-codim-1-a} There exists $j \in \Z/f\Z$ such that \begin{align}\label{eqn:ram-codim-1-a}
  \chi_1 = \omega_j ^{e-1} \prod_{i \neq j} \omega_i^{s_i + e} \prod_{i \in \Z/f\Z} \omega_i^{t_i}, \qquad \chi_2 = \omega_j^{s_j + 1} \prod_{i \in \Z/f\Z} \omega_i^{t_i},
  \end{align} and
  \begin{align}\label{eqn:s-ram-codim-1-a}
      s_j \leq \begin{cases}
          p-3 \text{ if } f=1,\\
          p-2 \text{ if } f>1.
      \end{cases}
  \end{align}
		\item \label{unramified-codim-1} $e=1, f>1$ and there exists $j \in \Z/f\Z$ such that \Cref{eqn:ram-codim-1-a} holds, 
  $s_{j-1} \neq 0$ and $s_j = p-1$.
		\item \label{ramified-codim-1-b} $e>1$ and there exists $j \in \Z/f\Z$ such that 
  \begin{align}\label{eqn:ram-codim-1-b}
      \chi_1 = \omega_j ^{s_j + e-1} \prod_{i \neq j} \omega_i^{s_i + e} \prod_{i \in \Z/f\Z} \omega_i^{t_i}, \qquad
      \chi_2 &= \omega_j \prod_{i \in \Z/f\Z} \omega_i^{t_i}
  \end{align} and $s_j \neq 0$.
	\end{enumerate}

	
\end{proposition}

\begin{proof}
By \Cref{dim-L} and \Cref{intervals}, $|J^{\mathrm{AH}}_{\sigma_{\mathbf{t}, \mathbf{s}}}(\chi_1, \chi_2)| = ef-1$ if and only there exists $j \in \Z/fZ$ so that
\begin{align*}
    |\cI_i| = \begin{cases}
        e &\text{ if } i \neq j, \\
        e-1 &\text{ if } i=j.
    \end{cases}
\end{align*} which happens if and only if
either
\begin{align}\label{eqn:y_1}
    y_i = \begin{cases}
        0 &\text{ if } i \neq j,\\
        s_i + 1 &\text{ if } i = j,
    \end{cases}
\end{align} or, 
\begin{align}\label{eqn:y_2}
    &e >1 \text{ and} \nonumber \\ 
    &y_i = \begin{cases}
        0 &\text{ if } i \neq j,\\
        1 &\text{ if } i = j.
    \end{cases}
\end{align}

Note that \Cref{eqn:y_1} implies \Cref{eqn:ram-codim-1-a}, but the conditions on $e$ and $s_i$'s in \Cref{ramified-codim-1-a,unramified-codim-1} are not obvious. Similarly, when \Cref{eqn:y_2} is true, then \Cref{eqn:ram-codim-1-b} is satisfied, but the condition $s_j \neq 0$ in \Cref{ramified-codim-1-b} is not clear. 

To see how the extra conditions arise, suppose first that $\chi_1$ and $\chi_2$ satisfy \Cref{eqn:ram-codim-1-a}. We need to compute $m_i$'s, $A_{\mathrm{min}}$ and $y_i$'s and check when $y_i$'s satisfy either \Cref{eqn:y_1} or \Cref{eqn:y_2}. We consider three cases:

\begin{itemize}
    \item If $s_j \leq p-2$ with $f>1$ or $s_j \leq p-3$ with $f=1$, then $m_i = 0$ for $i \neq j$ and $m_j = s_j + 1$. Thus, the tuple $(m_0, \dots, m_{f-1})$ is in $\cS$ implying $A_{\mathrm{min}} = \varnothing$. This implies further that $y_i = m_i$ for each $i$ and \Cref{eqn:y_1} holds. 
    \item If $s_j = p-1$ (with $f>1$, since $\ts$ is assumed to be non-Steinberg), then $m_i = 0$ for $i \neq j-1$ and $m_{j-1} = 1$. We can obtain the desired values of $y_i$'s only if $(m_0, \dots, m_{f-1}) \not\in \cS$ which happens if and only if $e=1$ and $s_{j-1} \neq 0$. When that happens, $A_{\mathrm{min}} = \{j-1\}$ and we obtain \Cref{eqn:y_1}.
    \item If $s_j = p-2$ with $f=1$, then $m_j = 0$. Therefore, $A_{\mathrm{min}} = \varnothing$ and $y_j = 0$, contradicting both \Cref{eqn:y_1} and \Cref{eqn:y_2}.
\end{itemize}


Now, suppose $e>1$ and $\chi_1$ and $\chi_2$ satisfy \Cref{eqn:ram-codim-1-b} but not \Cref{eqn:ram-codim-1-a}. Thus, we assume $s_j \neq 0$ simply to avoid redundancy with \Cref{ramified-codim-1-a,unramified-codim-1}. In this setting, $m_i = 0$ if $i \neq j$ and $m_j = 1$. Evidently, $(m_0, \dots, m_j) \in \cS$. Therefore, $y_i = m_i$ for each $i$ and \Cref{eqn:y_2} holds, but \Cref{eqn:y_1} is false. 
\end{proof}




\subsubsection*{A basis of \texorpdfstring{$L^{*}_{\ts}(\tilde{\chi}_1, \tilde{\chi}_2)$ using \cite{ddr}}{Lsigma}.}
When $e=1$, \cite{ddr} provides an algorithm to specify a basis of $L^{*}_{\sigma_{\mathbf{t}, \mathbf{s}}}(\tilde{\chi}_1, \tilde{\chi}_2)$, assuming it to be non-empty. It will be significantly easier to use this algorithm for some of the calculations in the unramified case, and so, we recall some essentials of this algorithm starting with a few definitions, modified slightly where necessary to be consistent with our conventions. As before, let $(a_0, ..., a_{f-1})$ denote the tame signature of $\chi_2^{-1} \chi_1$ with the indices of the $a_i$ extended from $\Z/f\Z$ to all of $\Z$ in the usual way. Consider the function
$$\delta: \mathbb{Z} \to \mathbb{Z}$$ defined as follows: For $j \in \Z$, $\delta(j) = j$ unless there exists a (necessarily unique) integer $i > j$ such that for each $k \in [j, i-2]$ (taken to be the empty interval if $j =i-1$]), $a_k = p-1$ and $a_{i-1} = p$.  When this exception happens, $\delta(j) = i$. The function $\delta$ induces a function $\mathbb{Z}/f\mathbb{Z} \to \mathbb{Z}/f\mathbb{Z}$, also denoted by $\delta$. 

Next, consider a function $\mu$ on subsets of $\Z/f\Z$ defined as follows: 
If $\delta(J) \subset J$, $\mu(J) := J$. Else, choose some $i_1 \in \Z$ so that $\bar{i_1} \in \delta(J) \smallsetminus J$ and let $j_1$ be the largest integer such that $j_1 < i_1$, $\bar{j_1} \in J$ and $\delta(j_1) = i_1$. If $J = \{\bar{j_1}, \dots, \bar{j_r}\}$ with $j_1 > j_2 > ... > j_r > j_1 - f$, define $i_{\kappa}$ for $\kappa \in [2, r]$ inductively as follows:
 \begin{align*}
     i_{\kappa} = \begin{cases}
     \delta(j_{\kappa}) \: &\text{ if } i_{\kappa-1} > \delta(j_{\kappa}), \\
     j_{\kappa} \: &\text{ otherwise. } 
     \end{cases}
 \end{align*}

Then $\mu(J) := \{\bar{i_1}, \dots, \bar{i_r}\}$. Note, in particular, that the definitions of the functions $\delta$ and $\mu$ depend on $\chi_2^{-1}\chi_1$.


Next, let
\begin{align}\label{J-max}
    J_{\mathrm{max}} := \{i \in \mathbb{Z}/f\mathbb{Z} \; | \; y_i = 0\}\end{align} where $y_i$ are as defined in \Cref{definition-y-z}.

When $e=1$, each $\alpha \in \mu(J_{\mathrm{max}})$ specifies a distinguished basis element of $L^{*}_{\sigma_{\mathbf{t}, \mathbf{s}}}(\tilde{\chi}_1, \tilde{\chi}_2)$ and if $\tilde{\chi}_2^{-1} \tilde{\chi}_1 \not\in \{1, \epsilon\}$, then by \cite[Conj.~7.2]{ddr} and the main result of \cite{calegari2017explicit}, these distinguished basis elements labelled by elements of $\mu(J_{\mathrm{max}})$ give a complete basis of $L^{*}_{\sigma_{\mathbf{t}, \mathbf{s}}}(\tilde{\chi}_1, \tilde{\chi}_2)$. Thus, assuming $e=1$,
$$|J^{\text{AH}}_{\ts}(\chi_1, \chi_2)| = |J_{\mathrm{max}}| = |\mu(J_{\mathrm{max}})|.$$ Further, the constructions of \cite{ddr,steinmetz} imply that if $\tss$ is another non-Steinberg Serre weight not isomorphic to $\ts$, then
\begin{align}\label{dim-cap-unram}
|J^{\text{AH}}_{\ts}(\chi_1, \chi_2) \cap J^{\text{AH}}_{\tss}(\chi_1, \chi_2)| = |\mu(J_{\mathrm{max}}) \cap \mu(J'_{\mathrm{max}})|
\end{align}
where $J'_{\mathrm{max}}$ is the analogue of $J_{\mathrm{max}}$ for $\tss$.

\subsection{Serre weights of irreducible \texorpdfstring{$G_{\Qp}$}{GQp}-representations}
When $K=\Qp$, we will need results on Serre weights of irreducible representations.
Let $\Q_{p^2}$ be the quadratic unramified extension of $\Qp$ in its fixed algebraic closure with residue field $\F_{p^2}$. Fix a $(p^{2} - 1)$-th root of $p$ in the algebraic closure of $\Qp$, denoted $\sqrt[p^2 - 1]{p}$, and consider the map \begin{align*}
    I_K &\longrightarrow \F_{p^2}^{\times} \\
    g &\longmapsto \frac{g(\sqrt[p^2 - 1]{p})}{\sqrt[p^2 - 1]{p}} \mod p.
\end{align*}
On composing with the two distinct embeddings of $\F_{p^2}$ into $\Fbar$, we obtain two characters $\eta_1, \eta_2: I_K \to \Fbar^{\times}$. These are the fundamental characters of level $2$.


\begin{lemma}\label{wt-irred}
    Let $\rhobar \colon G_{\Qp} \to \GL_2(\Fbar)$ be an irreducible representation. Then $\ts \in W(\rhobar^{\vee})$ if and only if
    $$\rhobar|_{I_K} \cong \eta_1^{t+s+1} \eta_2^{t} \oplus \eta_1^{t} \eta_2^{t+s+1}$$
\end{lemma} 
\begin{proof}
    This is immediate from \cite[Defn.~4.1.4, Thm.~4.1.6]{gls} keeping in mind that $\ts \in W(\rhobar^{\vee})$ if and only if $\ts \in W^{\text{explicit}}(\rhobar) = W^{\text{cris}}(\rhobar)$ in the sense of \cite{gls}.
\end{proof}

\begin{proposition}\label{irred-intersection}
Let $K = \Qp$ and let $\ts$ and $\tss$ be non-isomorphic non-Steinberg Serre weights. The stack $\cX_{\ts} \cap \cX_{\tss}$ has finite type points corresponding to irreducible representations if and only if $s' = p-1-s$ and $t' \equiv t + s \mod p-1$.
\end{proposition}
\begin{proof}
By \cite[Thm.~1.2]{caraiani2022geometric} and \Cref{wt-irred}, we need to determine when $$\eta_1^{t+s+1} \eta_2^{t} \oplus \eta_1^{t} \eta_2^{t+s+1}= \eta_1^{t'+s'+1} \eta_2^{t'} \oplus \eta_1^{t'} \eta_2^{t'+s'+1}.$$
Taking ratios of the direct summands, we have two possibilities. Either 
\begin{align*}
    &&\eta_1^{s+1} \eta_2^{-(s+1)}\quad &= \quad \eta_1^{s'+1} \eta_2^{-(s'+1)} \\
    &\iff &\eta_1^{s} \eta_2^{s'} \quad &= \quad \eta_1^{s'} \eta_2^{s}  
\end{align*} or 
\begin{align*}
    &&\eta_1^{s+1} \eta_2^{-(s'+1)} \quad &= \quad \eta_1^{-(s'+1)} \eta_2^{s+1} \\
    &\iff &\eta_1^{s} \eta_2^{p-s'-1} \quad &= \quad \eta_1^{p-s'-1} \eta_2^{s}.
\end{align*}
Since $s, s' < p-1$, the first case forces $\ts \cong \tss$, a contradiction. The second case implies the desired relationship between $(t,s)$ and $(t', s')$.
\end{proof}
\begin{remark}
The criterion in the statement of \Cref{irred-intersection} is the same as that in \Cref{ext-gamma}\ref{ext-gamma-totally-ramified}\ref {ext-gamma-totally-ramified-2}.
\end{remark}

\subsection{Types of codimension one intersections}\label{geometry-to-ext}
We combine the results of \Cref{geometry} and this section to obtain the following theorem, true for arbitrary $K$.
\begin{theorem}\label{criteria-codim-arbitrary}
    Let $\sigma_{\mathbf{t}, \mathbf{s}}$ and $\sigma_{\mathbf{t'}, \mathbf{s'}}$ be non-isomorphic, non-Steinberg Serre weights. Then $\cX_{\ts} \cap \cX_{\tss}$ has dimension $[K:\Qp] - 1$ if and only if one of the following is satisfied.
    \begin{enumerate}
        \item\label{codim-1-generic} There exist $I_{K}$-characters $\chi_1$ and $\chi_2$ such that
        \begin{align}\label{criteria-codim}
        |J^{\mathrm{AH}}_{\sigma_{\mathbf{t}, \mathbf{s}}}(\chi_1, \chi_2) \cap J^{\mathrm{AH}}_{\sigma_{\mathbf{t'}, \mathbf{s'}}}(\chi_1, \chi_2)| = ef-1.\end{align}
        \item\label{codim-1-irred} $K = \Qp$ and
        \begin{align}\label{eqn:irred-intersection}
        s' =p-1-s, \qquad t'\equiv t+ s \mod p-1.\end{align}
    \end{enumerate}
Let $n$ be the number of irreducible components in $\cX_{\ts} \cap \cX_{\tss}$ of dimension $[K:\Qp]-1$. If $K \neq \Qp$, then $n$ is the number of isomorphism classes of ordered pairs of $I_K$-characters $(\chi_1, \chi_2)$ satisfying \Cref{codim-1-generic}. If $K = \Qp$, then $n$ equals the number of isomorphism classes of $I_K$-representations $\chi_1 \oplus \chi_2$, where $\chi_1$ and $\chi_2$ satisfy \Cref{codim-1-generic}, plus $1$ if \Cref{eqn:irred-intersection} holds.

\end{theorem}
\begin{proof}
   The existence of $\chi_1$ and $\chi_2$ satisfying \Cref{codim-1-generic} 
   is equivalent to the existence of $G_K$-characters $\tilde{\chi}_1$ and $\tilde{\chi}_2$, so that for all $a, b \in \Fbar^{\times}$ such that
$\tilde{\chi}_2^{-1}\tilde{\chi}_1 \otimes \mathrm{ur}_{b^{-1}a} \neq 1$, we have 
   \begin{align*}
   &\dim \; L_{\ts}(\tilde{\chi}^{-1}_2 \otimes \mathrm{ur}_{b^{-1}}, \tilde{\chi}^{-1}_1\otimes \mathrm{ur}_{a^{-1}}) \cap L_{\tss}(\tilde{\chi}^{-1}_2\otimes \mathrm{ur}_{b^{-1}}, \tilde{\chi}^{-1}_1\otimes \mathrm{ur}_{a^{-1}}) \\
    \stackrel{\Cref{notation-dual}}{=} \: &\dim \; L^{*}_{\ts}(\tilde{\chi}_1\otimes \mathrm{ur}_{a}, \tilde{\chi}_2\otimes \mathrm{ur}_{b}) \cap L^{*}_{\tss}(\tilde{\chi}_1\otimes \mathrm{ur}_{a}, \tilde{\chi}_2\otimes \mathrm{ur}_{b}) \\\stackrel{\Cref{dim-L-cap}}{=} \: &ef-1.
   \end{align*}
   Thus, by \cite[Thm.~1.2]{caraiani2022geometric}, \Cref{codim-1-generic} is equivalent to the existence of a maximal family of representations $\cF_{\tilde{\chi}^{-1}_2, \tilde{\chi}^{-1}_1}$ of dimension $[K:\Qp]-1$ inside $|\cX_{\ts} \cap \cX_{\tss}|$. Clearly, when $K \neq \Qp$, the number of pairwise separated maximal families of dimension $[K:\Qp]-1$ in $|\cX_{\ts} \cap \cX_{\tss}|$ equals the number of isomorphism classes of ordered pairs of $I_K$-characters $(\chi_1, \chi_2)$ satisfying \Cref{codim-1-generic}.
    Further, when $K=\Qp$, \Cref{codim-1-irred} is equivalent to the existence of points corresponding to irreducible representations in $|\cX_{\ts} \cap \cX_{\tss}|$ by \Cref{irred-intersection}. All such points corresponding to irreducible representations have the same isomorphism class after restricting to $I_K$ by \Cref{wt-irred}. An application of \Cref{geometry-proposition-dimension-intersection} finishes the proof.
\end{proof}

\begin{definition}
    We say that an unordered pair of non-isomorphic non-Steinberg Serre weights $\sigma_{\mathbf{t}, \mathbf{s}}$ and $\sigma_{\mathbf{t'}, \mathbf{s'}}$ have a ``type I" intersection witnessed by an ordered pair of $I_K$-characters $(\chi_1, \chi_2)$ if after exchanging $\ts$ and $\tss$ if necessary, $\ts$ is the highest weight for $(\chi_1, \chi_2)$ while $|J^{\mathrm{AH}}_{\sigma_{\mathbf{t'}, \mathbf{s'}}}(\chi_1, \chi_2)| = ef-1$.
\end{definition}
\begin{proposition}\label{lem:type-1-codim}
If $\sigma_{\mathbf{t}, \mathbf{s}}$ and $\sigma_{\mathbf{t'}, \mathbf{s'}}$ have a type I intersection, then $|J^{\mathrm{AH}}_{\sigma_{\mathbf{t}, \mathbf{s}}}(\chi_1, \chi_2) \cap J^{\mathrm{AH}}_{\tss}(\chi_1, \chi_2)| = ef-1$.
\end{proposition}
\begin{proof} After exchanging $\ts$ and $\tss$ if necessary, we have $$|J^{\mathrm{AH}}_{\sigma_{\mathbf{t}, \mathbf{s}}}(\chi_1, \chi_2)| = ef.$$
Therefore, there exist $G_K$-characters $\tilde{\chi}_1$ and $\tilde{\chi}_2$, so that for all $a, b \in \Fbar^{\times}$ such that $\tilde{\chi}_2^{-1}\tilde{\chi}_1 \otimes \mathrm{ur}_{b^{-1}a} \not\in \{1,\epsilon\}$,
$L^{*}_{\ts}(\tilde{\chi}_1\otimes \mathrm{ur}_{a}, \tilde{\chi}_2\otimes \mathrm{ur}_{b}) = \Ext^{1}_{G_K}(\tilde{\chi}_1\otimes \mathrm{ur}_{a}, \tilde{\chi}_2\otimes \mathrm{ur}_{b})$. As a result,
\begin{align*}
&|J^{\mathrm{AH}}_{\sigma_{\mathbf{t}, \mathbf{s}}}(\chi_1, \chi_2) \cap J^{\mathrm{AH}}_{\tss}(\chi_1, \chi_2)| \\
=\quad &\dim L^{*}_{\ts}(\tilde{\chi}_1\otimes \mathrm{ur}_{a}, \tilde{\chi}_2\otimes \mathrm{ur}_{b}) \cap L^{*}_{\tss}(\tilde{\chi}_1\otimes \mathrm{ur}_{a}, \tilde{\chi}_2\otimes \mathrm{ur}_{b}) \\
=\quad &\dim L^{*}_{\tss}(\tilde{\chi}_1\otimes \mathrm{ur}_{a}, \tilde{\chi}_2\otimes \mathrm{ur}_{b}) \\
=\quad &|J^{\mathrm{AH}}_{\tss}(\chi_1, \chi_2)| = ef-1.
\end{align*}\\[-2.5\baselineskip] \end{proof}

\begin{proposition}
The Serre weights $\ts$ and $\tss$ can have a type I intersection witnessed by at most two pairs of $I_K$-characters.
\end{proposition}
\begin{proof}
    We have two possibilities, either $|J^{\mathrm{AH}}_{\ts}(\chi_1, \chi_2)| = ef$ or $|J^{\mathrm{AH}}_{\tss}(\chi_1, \chi_2)| = ef$. In both cases, \Cref{ramified-full-dim} determines $(\chi_1, \chi_2)$ completely.
\end{proof}
\begin{definition}
     We say that an unordered pair of non-isomorphic non-Steinberg Serre weights $\sigma_{\mathbf{t}, \mathbf{s}}$ and $\sigma_{\mathbf{t'}, \mathbf{s'}}$ have a ``type II" intersection witnessed by an ordered pair of $I_K$-characters $(\chi_1, \chi_2)$ if $J^{\mathrm{AH}}_{\sigma_{\mathbf{t}, \mathbf{s}}}(\chi_1, \chi_2) = J^{\mathrm{AH}}_{\sigma_{\mathbf{t'}, \mathbf{s'}}}(\chi_1, \chi_2)$ of cardinality $ef-1$.
\end{definition}

Note that by definition of types and \Cref{criteria-codim-arbitrary}, the stack $\cX_{\ts} \cap \cX_{\tss}$ has dimension $[K:\Qp] - 1$ if and only if $\ts$ and $\tss$ have either a type I intersection or a type II intersection.
\begin{proposition}\label{prop:triple}
    Suppose $\sigma_{\mathbf{t}, \mathbf{s}}$ and $\sigma_{\mathbf{t'}, \mathbf{s'}}$ have a type II intersection witnessed by $(\chi_1, \chi_2)$. Then there exists a unique non-Steinberg Serre weight $\tsss$ such that $\ts$ and $\tsss$, as well as $\tss$ and $\tsss$, have a type I intersection witnessed by $(\chi_1, \chi_2)$. Moreover, $\cX_{\ts} \cap \cX_{\tss} \cap \cX_{\tsss}$ has dimension $[K:\Qp] -1$. On the other hand, if there exist pairwise non-isomorphic, non-Steinberg Serre weights $\ts$, $\tss$ and $\tsss$ satisfying $\dim \cX_{\ts} \cap \cX_{\tss} \cap \cX_{\tsss} = [K:\Qp]-1$, then at least two of $\{\ts, \tss, \tsss\}$ have a type II intersection between them.
\end{proposition}
\begin{proof}
Suppose first that $\sigma_{\mathbf{t}, \mathbf{s}}$ and $\sigma_{\mathbf{t'}, \mathbf{s'}}$ have a type II intersection witnessed by $(\chi_1, \chi_2)$. Existence of $\tsss$ such that $\ts$ and $\tsss$, as well as $\tss$ and $\tsss$, have a type I intersection witnessed by $(\chi_1, \chi_2)$ is equivalent to $\tsss$ being the highest weight for $(\chi_1, \chi_2)$. This in turn is equivalent to finding a solution for the equations
\begin{align*}
 \chi_1 &= \prod_{i \in \Z/f\Z} \omega_i^{s''_i + e} \prod_{i \in \Z/f\Z} \omega_i^{t''_i}, \text{ and} \nonumber\\ \chi_2 &= \prod_{i \in \Z/f\Z} \omega_i^{t''_i}.  \end{align*}
 where each $s''_i \in [0, p-1]$ and not all $s''_i$ equal $p-1$. Such a solution clearly exists and is unique, settling the first part of the statement. 

    By the proof of \Cref{criteria-codim-arbitrary}, $|\cX_{\ts} \cap \cX_{\tss}|$ contains a maximal family $\cF_{\tilde{\chi}_2^{-1}, \tilde{\chi}_1^{-1}}$ of dimension $ef-1$. Using \cite[Thm.~1.2]{caraiani2022geometric} and similar arguments as in \Cref{lem:type-1-codim}, we find that $\cF_{\tilde{\chi}_2^{-1}, \tilde{\chi}_1^{-1}}$ is contained in $|\cX_{\tsss}|$. An application of \Cref{geometry-proposition-dimension-intersection} finishes the proof of the first half of the statement.

    Now suppose $\ts$, $\tss$ and $\tsss$ are pairwise non-isomorphic, non-Steinberg Serre weights such that $\dim \cX_{\ts} \cap \cX_{\tss} \cap \cX_{\tsss} = [K:\Qp]-1$. Then by \Cref{geometry-proposition-dimension-intersection}, there exists at least one maximal family of representations $\cF_{\psi_2^{-1}, \psi_1^{-1}}$ of dimension $[K:\Qp]-1$ in $|\cX_{\ts} \cap \cX_{\tss} \cap \cX_{\tsss}|$, where $\psi_1, \psi_2$ are $G_K$-characters. Therefore, for any pair of non-isomorphic Serre weights in $\{\ts, \tss, \tss\}$, $(\psi_1|_{I_K}, \psi_2|_{I_K})$ witnesses either a type I or a type II intersection. As there can be only one highest weight for $(\psi_1|_{I_K}, \psi_2|_{I_K})$, at least one of these is a type II intersection.
\end{proof}

\section{Type I intersections}\label{type1}
In this section and the next, we will explicitly determine the existence of type I and type II intersections between pairs of Serre weights. First, we introduce some more notation and conventions.
When comparing $f$-tuples $\mathbf{s}$ and $\mathbf{s'}$, we will often and without additional comment take the indices of the components of $\mathbf{s}$ and $\mathbf{s}'$ to be integers valued in $[0, f-1]$ instead of element of the set $\Z/f\Z$, by identifying each element of $\Z/f\Z$ with its representative in $[0, f-1]$. The chosen set of indices, $[0, f-1]$ versus $\Z/f\Z$, will be clear from the context. While making a comparison, we will only state the values of $s_{i}$ and $s'_{i}$ that have specific constraints or are potentially different from each other. If we say $(\dots, s_i, \dots) = (\dots, \in [a, b], \dots)$ or $(\dots, s_i, \dots) = (\dots, c, \dots)$, we mean respectively that $s_i$ can take any value $\in [a, b]$ or $s_i =c$. 
If for some $a, b \in [0, p-1]$ and $j \in [-1, f-2]$, a subsequence $(s_i, \dots, s_{i+j})$ is set to equal $(a, \dots, a)$ and $(s'_i, \dots, s'_{i+j})$ is set to equal $(b, \dots, b)$, then this means that for each $k \in [i, i+j]$ $s_k = a$ and $s'_k = b$. Here, the interval $[i, i+j]$ is to be interpreted to be the empty set if $j = -1$. 
If the values of $s_{i}$ and $s'_{i}$ are not specified, then we assume that $s_i$ can take any value in $[0, p-1]$ and $s'_i = s_i$. 
Similar notational norms apply with the roles of $s_i$ and $s'_i$ interchanged. When $f=1$, we will omit the indices $i$ in the tuples, and also write the character $\omega_i$ as simply $\omega$. 


We will retain the symbols $m_i$, $y_i$, $z_i$, $\mathcal{I}_i$, $\lambda_i$, $\xi_i$, $\alpha$ and $J_{\mathrm{max}}$ as defined in \Cref{defn:m_i}, \Cref{definition-y-z,definition-interval,definition-xi}, \Cref{defn:alpha-i-u,J-max} for $\sigma_{\mathbf{t}, \mathbf{s}}$, and will replace them respectively with $m'_i, y'_i$, $z'_i$, $\mathcal{I}'_i$, $\lambda'_i$, $\xi'_i$, $\alpha'$ and $J'_{\mathrm{max}}$ for $\sigma_{\mathbf{t'}, \mathbf{s'}}$ and with $m''_i, y''_i$, $z''_i$, $\mathcal{I}''_i$, $\lambda''_i$, $\xi''_i$, $\alpha''$ and $J''_{\mathrm{max}}$ for $\sigma_{\mathbf{t''}, \mathbf{s''}}$.

Let $\ts$ and $\tss$ be non-isomorphic, non-Steinberg Serre weights. We will now compute criteria for existence of an ordered pair of $I_K$-characters $(\chi_1, \chi_2)$ with highest weight $\ts$ witnessing a type I intersection between $\ts$ and $\tss$.
As we saw in \Cref{ramified-codim-1}, $|J^{\mathrm{AH}}_{\sigma_{\mathbf{t'}, \mathbf{s'}}}(\chi_1, \chi_2)| = ef-1$ can happen in one of three ways. 

\subsection{Type I intersections when \texorpdfstring{$f=1$}{f equals 1}}

\subsubsection{Case 1: \texorpdfstring{
$\tss$ satisfies \Cref{ramified-codim-1}\ref{ramified-codim-1-a}}{J-tss via i or ii}.}


\begin{proposition}\label{prop:tot-ram-type1a}
    Let $f=1$. There exists an ordered pair of $I_K$-characters $(\chi_1, \chi_2)$ with highest weight $\ts$ and the statement of \Cref{ramified-codim-1}\ref{ramified-codim-1-a} holding true for $\tss$ (in particular, $|J^{\mathrm{AH}}_{\tss}(\chi_1, \chi_2)| = ef-1$) if and only if 
    \begin{align}\label{eqn:tot-ram-type1a}
    s' = p-s-3 \qquad \text{ and } \qquad t' \equiv t - s' -1 \mod p-1.\end{align}
    This in turn happens if and only if \Cref{ext-gamma}\ref{ext-gamma-totally-ramified}\ref{ext-gamma-totally-ramified-1} is true.
\end{proposition}
\begin{proof}
Existence of such a pair $(\chi_1, \chi_2)$ is equivalent to
\begin{align*}
  \chi_1 = \omega ^{e-1 + t'} = \omega^{s+e+t}, \qquad \chi_2 = \omega^{s' + 1 + t'} = \omega^{t} \qquad \text{ and } \qquad s'\leq p-3.
  \end{align*}
Comparing two different ways of writing $\chi_2^{-1} \chi_1$, we obtain the following equivalences mod $p-1$:
\begin{align*} 
	s+e &\equiv e-2-s' \\
	\iff s' &\equiv - 2 - s \equiv p-3-s.\\
\end{align*}
If $s = p-2$, then $s' = p-2$, a contradiction. Therefore, $s \leq p-3$ and $s=p-s'-3$. Comparing the expressions for $\chi_2$, we get the desired expression for $t'$. 
\end{proof}


\begin{remark}\label{rem:tot-ram-type1a-number}
The criterion in \Cref{eqn:tot-ram-type1a} is symmetric in $\ts$ and $\tss$. Therefore, when it holds, there exist two distinct ordered pairs of $I_K$-characters, $(\chi_1, \chi_2)$ with highest weight $\ts$ and $(\chi'_1, \chi'_2)$ with highest weight $\tss$, both witnessing type I intersections between $\ts$ and $\tss$. 

When $K = \Qp$, we get $$\chi_1 = \omega^{s+1+t} = \omega^{t'} = \chi'_2 \quad \text{ and } \quad \chi_2 = \omega^{t} = \omega^{s'+t'+1} = \chi_1.$$ Therefore, $\chi_1 \oplus \chi_2 \cong \chi'_1 \oplus \chi'_2$.

\end{remark}


\subsubsection{Case 2: \texorpdfstring{
$\tss$ satisfies \Cref{ramified-codim-1}\ref{ramified-codim-1-b}}{J-tss via iii}.}
\begin{proposition}\label{prop:tot-ram-type1c}
    Let $f=1$ and $e>1$. There exists an ordered pair of $I_K$-characters $(\chi_1, \chi_2)$ with highest weight $\ts$ and  the statement of \Cref{ramified-codim-1}\ref{ramified-codim-1-b} holding true for $\tss$ (in particular, $|J^{\mathrm{AH}}_{\tss}(\chi_1, \chi_2)| = ef-1$) if and only if 
    \begin{align}\label{eqn:tot-ram-type1c}
        s' &= \begin{cases}
            s+2 &\text{ and } s<p-3, \text{ or }\\
            1 &\text{ and } s=p-2,
        \end{cases} \nonumber \\
        t' &\equiv t-1 \mod p-1
    \end{align} 
    If the above holds, then if $s<p-3$, then \Cref{H1-criteria} is true, and if $s=p-2$, then \Cref{ext-gamma}\ref{ext-gamma-totally-ramified}\ref{ext-gamma-totally-ramified-2} is true.
\end{proposition}
\begin{proof}
Existence of such a pair $(\chi_1, \chi_2)$ is equivalent to 
\begin{align*}
  \chi_1 = \omega ^{s' + e-1+ t'} = \omega^{s+e+t}, \qquad \chi_2 = \omega^{1 + t'} =\omega^{t} \qquad \text{ and } \qquad s'\neq 0.
  \end{align*}

Comparing the two ways of writing $\chi_2^{-1} \chi_1$, we obtain the following equivalence mod $p-1$
\begin{align*} 
	s+e &\equiv e-2+s'  \\
	\iff s' &\equiv s + 2.
\end{align*}
Thus, the requirement that $s' \neq 0$ gives the conditions on $s$ and $s'$ as desired. The expression for $t'$ follows from the expressions for $\chi_2$.
\end{proof}
\begin{remark}\label{rem:tot-ram-type1c-number}
    The criteria in \Cref{eqn:tot-ram-type1c} are not symmetric when  $p>3$ and symmetric when $p=3$. Therefore, if these criteria hold and $p>3$ (resp. $p=3$), then there exists exactly one (resp. two) ordered pair(s) of $I_K$-characters witnessing a type 1 intersection between $\ts$ and $\tss$.
\end{remark}

\subsection{Type I intersections when \texorpdfstring{$f>1$}{f is bigger than 1}}

\subsubsection{Case 1: \texorpdfstring{
$\tss$ satisfies \Cref{ramified-codim-1}\ref{ramified-codim-1-a} or \ref{unramified-codim-1}}{J-tss via i or ii}.} 



\begin{proposition}\label{proposition-unram-type-1}
     Let $f>1$. There exists an ordered pair of $I_K$-characters $(\chi_1, \chi_2)$ with highest weight $\ts$ and the statement of \Cref{ramified-codim-1}\ref{ramified-codim-1-a} or \ref{unramified-codim-1} holding true for $\tss$ (in particular, $|J^{\mathrm{AH}}_{\tss}(\chi_1, \chi_2)| = ef-1$) if and only if, after translating the indices for the components of $\mathbf{t}$, $\mathbf{s}$, $\mathbf{t}'$ and $\mathbf{t}'$ by adding a common fixed element of $\Z/f\Z$ if necessary, $\mathbf{s}$ and $\mathbf{s}'$ satisfy one of the conditions in \Cref{table:unram-type-1} and \begin{align} \label{type-1-case-1-det-twist}
	\sum_{i \in \Z/f\Z} p^{f-1-i} t'_i  \equiv -1-s'_{f-1} + \sum_{i \in \Z/f\Z} p^{f-1-i} t_i \mod p^f-1.
\end{align}
     
\end{proposition}
\begin{proof}
Existence of such a pair $(\chi_1, \chi_2)$ is equivalent to 
 \begin{align*}
 \chi_1 \qquad & = && \omega_{j} ^{e-1} \prod_{i \neq j} \omega_i^{s'_i + e} \prod_{i \in \Z/f\Z} \omega_i^{t'_i} &= \qquad &\prod_{i \in \Z/f\Z} \omega_i^{s_i + e} \prod_{i \in \Z/f\Z} \omega_i^{t_i}, \\
 \chi_2 \qquad &= && \omega_{j}^{s'_j + 1} \prod_{i \in \Z/f\Z} \omega_i^{t'_i} &= \qquad &\prod_{i \in \Z/f\Z} \omega_i^{t_i},
 \end{align*}
  and either $s'_{j} \leq p-2$, or $e=1$, $s'_{j-1} \neq 0$ and $s'_{j} = p-1$, for some $j \in \Z/f\Z$.
Translate all indices by a fixed element if necessary so that $j=f-1$. Comparing the two ways of writing $\chi_2^{-1} \chi_1$, we obtain the following equivalences mod $p^{f}-1$:
\begin{align*} 
	&\sum_{i \in \Z/f\Z} p^{f-1-i} (s_i + e) &\equiv &\hspace{0.2cm} e-2-s'_{f-1} + \sum_{i= 0}^{f-2} p^{f-1-i} (s'_i + e)\\
	\iff &\sum_{j \in \Z/f\Z} p^{f-1-i} s_i &\equiv &\hspace{0.2cm} -2-s'_{f-1} + \sum_{i=0}^{f-2} p^{f-1-i} s'_i \\
	&& \equiv &\hspace{0.2cm} p -s'_{f-1} - 2 + p (s'_{f-2} -1) + \sum_{i =0}^{f-3} p^{f-1-i} s'_i
\end{align*}

Therefore, for any fixed $\mathbf{s}'$, we can find a unique $\mathbf{s}$ satisfying the equation above. The uniqueness of $\mathbf{s}$ follows from the requirement that each $s_i \in [0, p-1]$, and not all $s_i$ can be $p-1$. Taking all possible (non-Steinberg) values of $\mathbf{s}'$, we obtain the pairs $\mathbf{s}, \mathbf{s}'$ in \Cref{table:unram-type-1}. 
Comparing the two ways of writing $\chi_2$, we obtain mod $p^{f}-1$,
\begin{align*}
	\sum_{i \in \Z/f\Z} p^{f-1-i} t'_i  + (s'_{f-1} + 1) &\equiv \sum_{j \in \Z/f\Z} p^{f-1-i} t_i \\
	\iff \sum_{i \in \Z/f\Z} p^{f-1-i} t'_i  &\equiv -1-s'_{f-1} + \sum_{i \in \Z/f\Z} p^{f-1-i} t_i
 \end{align*} finishing the proof.
\end{proof}

\vspace{0.5cm}

\subsubsection{Case 2: \texorpdfstring{
$\tss$ satisfies \Cref{ramified-codim-1}\ref{ramified-codim-1-b}}{J-tss via iii}.} 

\begin{proposition}\label{summary-ramified-codim-1-b}
     Let $f>1$, $e>1$. There exists an ordered pair of $I_K$-characters $(\chi_1, \chi_2)$ with highest weight $\ts$ and the statement of \Cref{ramified-codim-1}\ref{ramified-codim-1-b} holding true for $\tss$ (in particular, $|J^{\mathrm{AH}}_{\tss}(\chi_1, \chi_2)| = ef-1$) if and only if, after translating the indices for the components of $\mathbf{t}$, $\mathbf{s}$, $\mathbf{t}'$ and $\mathbf{t}'$ by adding a common fixed element of $\Z/f\Z$ if necessary, $\mathbf{s}$ and $\mathbf{s}'$ satisfy one of the conditions in \Cref{table:ram-type-1b} and \begin{align} \label{ramified-codim-1-b-chi2}
	\sum_{i \in \Z/f\Z} p^{f-1-i} t'_i  \equiv -1 + \sum_{i \in \Z/f\Z} p^{f-1-i} t_i \mod p^f-1.
\end{align}
\end{proposition}
\begin{proof}
Existence of such a pair $(\chi_1, \chi_2)$ is equivalent to 
\begin{align*}
 \chi_1 \qquad & = && \omega_{j} ^{s'_j + e-1} \prod_{i \neq j} \omega_i^{s'_i + e} \prod_{i \in \Z/f\Z} \omega_i^{t'_i} &= \qquad &\prod_{i \in \Z/f\Z} \omega_i^{s_i + e} \prod_{i \in \Z/f\Z} \omega_i^{t_i}, \\
 \chi_2 \qquad &= && \omega_{j}^{1} \prod_{i \in \Z/f\Z} \omega_i^{t'_i} &= \qquad &\prod_{i \in \Z/f\Z} \omega_i^{t_i},
 \end{align*}
  and $s'_{j} \neq 0$, for some $j \in \Z/f\Z$.
Translate all indices by a fixed element if necessary so that $j=f-1$. 
Writing $\chi_2^{-1}\chi_1$ in two different ways, we have the following equivalences mod $p^{f}-1$:
\begin{align}\label{ramified-codim-1-b-chi} 
	&\sum_{i \in \Z/f\Z} p^{f-1-i} (s_i + e) &\equiv &&e-2 + s'_{f-1} + \sum_{i \neq f-1} p^{f-1-i} (s'_i + e) \nonumber \\ 
	\iff 
	&\sum_{i \in \Z/f\Z} p^{f-1-i} s_i &\equiv &&s'_{f-1} - 2  + \sum_{i \neq f-1} p^{f-1-i} s'_i
\end{align}
We plug in all possible (non-Steinberg) values of $\mathbf{s}'$ so that $s'_{f-1} \neq 0$ and compute the unique (non-Steinberg) value of $\mathbf{s}$ so that the above equation is satisfied obtaining the pairs in \Cref{table:ram-type-1b}. 

Comparing the two ways of writing $\chi_2$, we obtain \Cref{ramified-codim-1-b-chi2}.
\end{proof}

\begin{remark}\label{type-1-summary}
Let $f>1$ and let $\ts$ and $\tss$ be weakly regular. Then there exists a type I intersection between $\ts$ and $\tss$ if and only if after translating all indices by a fixed element of $\Z/f\Z$ and exchanging $\ts$ and $\tss$ if necessary, either  \Cref{table:unram-type-1}\ref{item-unram-generic} and \Cref{type-1-case-1-det-twist} are true, or \Cref{table:ram-type-1b}\ref{ram-type-1b-i} and \Cref{ramified-codim-1-b-chi2} are true. Either way, the criteria are not symmetric, and there can exist at most one pair of $I_K$-characters $(\chi_1, \chi_2)$ witnessing a type I intersection between $\ts$ and $\tss$.

\end{remark}

\section{Type II intersections}\label{type2}

Let $\tss$ and $\tsss$ be non-isomorphic, non-Steinberg Serre weights. In this section, we will compute criteria for existence of a pair of characters $(\chi_1, \chi_2)$ witnessing a type II intersection between Serre weights $\sigma_{\mathbf{t'}, \mathbf{s'}}$ and $\sigma_{\mathbf{t''}, \mathbf{s''}}$. When such a pair $(\chi_1, \chi_2)$ exists, we will denote by $\ts$ its highest weight. When comparing tuples  $\mathbf{s}$, $\mathbf{s}'$ and $\mathbf{s}''$, we will continue to use the notational conventions we set at the beginning of \Cref{type1}. 

A type II intersection between $\tss$ and $\tsss$ witnessed by $(\chi_1, \chi_2)$ implies the existence of type I intersections between $\ts$ and $\tss$, and between $\ts$ and $\tsss$. Therefore, each of $\tss$ and $\tsss$ must satisfy one of the three conditions in \Cref{ramified-codim-1}, giving rise to different cases as we will see. 

\subsection{Type II intersections when \texorpdfstring{$f=1$}{f equals 1}}
\subsubsection{Case 1: \texorpdfstring{$\tss$ and $\tsss$ both satisfy \Cref{ramified-codim-1}\ref{ramified-codim-1-a} or both satisfy \Cref{ramified-codim-1}\ref{ramified-codim-1-b}}{1a or 1b}}\label{typeII-f-1-Case1} 
If $\tss$ and $\tsss$ both satisfy \Cref{ramified-codim-1}\ref{ramified-codim-1-a} or $e>1$ and both satisfy \Cref{ramified-codim-1}\ref{ramified-codim-1-b}, then one verifies immediately that $\tss \cong \tsss$, a contradiction.
 
\subsubsection{Case 2: \texorpdfstring{$\tss$ satisfies \Cref{ramified-codim-1}\ref{ramified-codim-1-a} and $\tsss$ satisfies \Cref{ramified-codim-1}\ref{ramified-codim-1-b}}{1a-and-1b}} 


\begin{proposition}\label{type-II-tot-ram}
Let $f=1$. There exists a type II intersection between $\tss$ and $\tsss$ witnessed by an ordered pair of $I_K$-characters $(\chi_1, \chi_2)$ with highest weight $\ts$ if and only if $e>1$, $p>3$ and after exchanging $\tss$ and $\tsss$ if necessary, $\tss$ satisfies \Cref{ramified-codim-1}\ref{ramified-codim-1-a}, $\tsss$ satisfies \Cref{ramified-codim-1}\ref{ramified-codim-1-b},
\begin{align}
\label{ramified-f=1-codim-1-s-s'}
&s' = p-3-s,  &&\qquad s'' = s+2,  &&\qquad s'' = p-1-s', \text{ and } \\
\label{ramified-f=1-codim-1-d-d'} &t' \equiv t + s + 1,  &&\qquad t'' \equiv -1 +t,  &&\qquad t'' \equiv t'+s' \mod p-1.
\end{align}
\end{proposition}
\begin{proof}
    
If there is to be a type II intersection witnessed by $(\chi_1, \chi_2)$, then by \Cref{typeII-f-1-Case1}, after exchanging $\tss$ and $\tsss$ if necessary, we must have $e>1$, $\tss$ satisfies \Cref{ramified-codim-1}\ref{ramified-codim-1-a} and $\tsss$ satisfies \Cref{ramified-codim-1}\ref{ramified-codim-1-b}. Therefore,
\begin{align*}
  &\chi_1 = \omega ^{e-1 + t'} = \omega ^{s'' + e-1+ t''} = \omega^{s+e + t}, \\
  &\chi_2 = \omega^{s' + 1 + t'} = \omega^{1 + t''} = \omega^{t}, \\
  &s' \leq p-3, \\
  &s'' \neq 0.
\end{align*}  
Comparing different ways of writing $\chi_2^{-1} \chi_1$ and $\chi_2$, we have the following equivalences mod $p-1$:	
\begin{align*}
	&e-2-s' \equiv e-2 + s'' \equiv s+e \\
	\iff
	&s'' \equiv p-1-s', \qquad
	s' \equiv p-3-s, \qquad
    s'' \equiv s+ 2.\\
    \\
	&s'+1+t' \equiv 1+t'' \equiv t \\
	\iff
 &t'' \equiv t' + s', \qquad
		t' \equiv t + s + 1, \qquad
		t'' \equiv -1 + t.\\
\end{align*}

If $s'=0$, then we find that $\tss \cong \tsss$, a contradiction. Therefore, $s' > 0$, \Cref{ramified-f=1-codim-1-s-s',ramified-f=1-codim-1-d-d'} are true and $s<p-3$. In particular, $p>3$.





Imposing the above conditions, we next calculate $y'$, $y''$, $z'$, $z''$, $\mathcal{I}'$, $\mathcal{I}''$, $\xi'$ and $\xi''$, and compare $J^{\mathrm{AH}}_{\sigma_{t', s'}}(\chi_1, \chi_2)$ with $J^{\mathrm{AH}}_{\sigma_{t'', s''}}(\chi_1, \chi_2)$.

By the proof of \Cref{ramified-codim-1}, $y'=s'+1$ and $z' = e-1$, while $y'' = 1$ and $z'' = s'' + e-1$. We obtain
\begin{align*}
    &\mathcal{I}' = [0, e-2] \\
    &\mathcal{I}'' = \{1\} \cup [s''+1, s''+e-2] \\
    &\xi' = (p-1)(e-1) + (e-2-s') \\
    &\xi''= (p-1)(s''+e-1) + (e-2+s'')
\end{align*}
We have equalities of sets
$$\{\xi' - u'(p-1) \: | \: u' \in \cI'\}= \{(p-1)v' + (e-2-s') \: | \: v'\in [1, e-1]\}$$ and
\begin{align*}
    &\{\xi'' - u''(p-1) \: | \: u'' \in \cI''\} \\
    &= \{(p-1)v'' + (e-2+s'') \: | \: v''\in [1, e-2] \cup \{s''+e-2\}\} \\
    &= \{(p-1)v'' + (e-2-s') + (p-1) \: | \: v''\in [1, e-2] \cup \{p-3-s'+e\}\} \\
    &= \{(p-1)v'' + (e-2-s') \: | \: v''\in [2, e-1] \cup \{p-2-s'+e\}\}.
\end{align*}

Hence, $J^{\mathrm{AH}}_{\sigma_{t', s'}}(\chi_1, \chi_2) = J^{\mathrm{AH}}_{\sigma_{t'', s''}}(\chi_1, \chi_2)$ if and only if for all $v' \in [1, e-1]$, there exists a $v'' \in [2, e-1] \cup \{p-2-s'+e\}$ such that:

\begin{align}\label{ramified-f=1-computing-J-V}
	\frac{(p-1)v' + (e-2-s')}{p^{\nu'}} = \frac{(p-1)v'' + (e-2-s')}{p^{\nu''}}
\end{align}
where $\nu'$ is the $p$-adic valuation of the numerator on L.H.S, while $\nu''$ is that of the numerator on R.H.S.

The only thing to check then is that \Cref{ramified-f=1-computing-J-V} holds for $v'=1$ and $v''=p-2-s'+e$. Plugging in,


\begin{align*}
	\mathrm{R.H.S.} &= \frac{(p-1)(p-2-s'+e) + e - 2 - s'}{p^{\nu''}} \\
	&=\frac{p(p-3-s'+e)}{p^{\nu'+1}} =\mathrm{L.H.S.}
\end{align*} finishing the proof.
\end{proof}

\begin{remark}\label{summary-totally-ramified-type-2}
If $s'' \leq p-3$, the relationship between $\tss$ and $\tsss$ is symmetric in \Cref{ramified-f=1-codim-1-s-s',ramified-f=1-codim-1-d-d'} but if $\tss$ and $\tsss$ are exchanged, $\ts$ changes. Therefore, if the criteria hold, there exist two distinct ordered pairs of $I_K$-characters witnessing a type II intersection between $\tss$ and $\tsss$. 

On the other hand, if $s'' = p-2$ and $s' = 1$, then the roles of $\tss$ and $\tsss$ cannot be exchanged since $p>3$. Therefore, if \Cref{ramified-f=1-codim-1-s-s',ramified-f=1-codim-1-d-d'} hold in this setting, then there exists exactly one pair $(\chi_1, \chi_2)$ witnessing a type II intersection between $\tss$ and $\tsss$.

We note also that \Cref{ramified-f=1-codim-1-s-s',ramified-f=1-codim-1-d-d'} are true for $\tss$ and $\tsss$ (for suitable $\ts$) if and only if they satisfy the conditions of \Cref{ext-gamma}\ref{ext-gamma-totally-ramified}\ref{ext-gamma-totally-ramified-2}.
\end{remark}

\subsection{Type II intersections when \texorpdfstring{$f > 1$, $e=1$}{f is bigger than 1, e equals 1}}

\begin{proposition}\label{type-2-unram}
Let $f>1$, $e=1$. Then there exists a type II intersection between $\tss$ and $\tsss$ 
if and only if after translating all indices by adding a fixed element of $\Z/f\Z$ and exchanging $\tss$ and $\tsss$ if necessary, $\mathbf{s}'$ and $\mathbf{s}''$ are as in one of the rows of \Cref{table:unram-type-2} and 
\begin{align}\label{det-twist-type2-unram}
     s'_{f-1} + 1 + \sum_{i \in \Z/f\Z} t'_i p^{f-1-i}  \equiv (s''_l +1) p^{f-1-l} + \sum_{i \in \Z/f\Z} t''_i p^{f-1-i}
\end{align} where $l \in \Z/f\Z$ is as described in each row of \Cref{table:unram-type-2}.
\end{proposition}
\begin{proof}
By \Cref{proposition-unram-type-1}, the existence of a pair of $I_K$-characters $(\chi_1, \chi_2)$ witnessing a type II intersection between $\tss$ and $\tsss$ is equivalent to the existence of $(\chi_1, \chi_2)$ with highest weight $\ts$ so that for some $j, l \in \Z/f\Z$,
    \begin{itemize}
        \item after translating all indices by adding $f-1-j$ so that $j$ translates to $f-1$,
        $\ts$ and $\tss$ satisfy \Cref{type-1-case-1-det-twist} and one of the conditions in \Cref{table:unram-type-1},
        \item after translating all indices in $\Z/f\Z$ by adding $f-1-l$ so that $l$ translates to $f-1$ 
        and replacing 
         $\tss$ with $\tsss$ in the statements in \Cref{type-1-case-1-det-twist} and \Cref{table:unram-type-1},
        $\ts$ and $\tsss$ satisfy \Cref{type-1-case-1-det-twist} and one of the conditions in \Cref{table:unram-type-1}, and
        \item $\mu(J'_{\mathrm{max}}) = \mu(J''_{\mathrm{max}})$.
    \end{itemize}  
   We may assume without loss of generality that $j=f-1$ (otherwise translate all indices by adding $f-1-j$ so that $j$ can be taken to be $f-1$). 

Thus, \Cref{det-twist-type2-unram} follows immediately from considering  \Cref{type-1-case-1-det-twist} and for the rest, we need to compare pairs of (same or distinct) rows in \Cref{table:unram-type-1} so that after translating the indices in one of the rows by a fixed element of $\Z/f\Z$ if necessary, the tuples $\bf{s}$ in the two rows match up and so do the entries in the last column of \Cref{table:unram-type-1}. 
The pairs of rows that can thus be compared so that we get non-isomorphic $\tss$ and $\tsss$ are \ref{item-unram-generic} and \ref{item-unram-7}, 
\ref{item-unram-generic} and \ref{item-unram-5}, \ref{item-unram-3} and \ref{item-unram-7}, \ref{item-unram-3} and \ref{item-unram-5}, \ref{item-unram-6} and \ref{item-unram-5}, \ref{item-unram-6} and \ref{item-unram-4}, \ref{item-unram-7} and \ref{item-unram-5}, \ref{item-unram-5} and \ref{item-unram-5}, and finally, \ref{item-unram-5} and \ref{item-unram-4}. Explicitly, $\bf{s}'$, $\bf{s}''$ and $\bf{s}$ are exactly those as in the rows of \Cref{table:unram-type-2}.
\end{proof}

\begin{remark}\label{rem:summary-regular-type2-unram}
    Note that in each pair of $\mathbf{s'}$ and $\mathbf{s''}$ featuring in \Cref{table:unram-type-2}, at least one is not weakly regular. Therefore, when $f>1$ and $e=1$, every type II intersection involves at least one Serre weight that is not weakly regular.
\end{remark}

\subsection{Type II intersections when \texorpdfstring{$f > 1$, $e>1$}{f and e are bigger than 1}}


We set up some extra notation that we will need in the following calculations. Let $(\chi_1, \chi_2)$ be an ordered pair of $I_K$-characters so that if $\tilde{\chi}_1$ and $\tilde{\chi}_2$ are extensions of $\chi_1$ and $\chi_2$ respectively to $G_K$, then $L^{\star}_{\tss}(\tilde{\chi}_1, \tilde{\chi}_2)$ and $L^{\star}_{\tsss}(\tilde{\chi}_1, \tilde{\chi}_2)$ are non-empty. In particular, for each $i \in \Z/f\Z$, $\xi'_i \equiv \lambda'_i \equiv \lambda''_i \equiv \xi''_i \mod f$. Therefore, for each $i$, we can define $V'_i, V''_i \subset \Z$ so that 
\begin{align*}
	\{\xi'_i - u(p^f - 1)\; | \; u \in \mathcal{I}'_i\} &= \{(p^f - 1)v + \lambda'_i \; | \; v \in V'_i\} \text{ and }\\
    \{\xi''_i - u(p^f - 1) \; | \; u \in \mathcal{I}''_j\} &= \{(p^f - 1)v + \lambda'_i \; | \; v \in V''_i\}
    \end{align*}



and further define $P', P'' \subset \Z/f\Z \times \mathbb{Z}$ as follows:
\begin{align*}
		P' &:= \{(i, v) \in \Z/f\Z \times \mathbb{Z} \; | \; v \in V'_i\} \: \text{ and}\\
		P'' &:= \{(i, v) \in \Z/f\Z \times \mathbb{Z} \; | \; v \in V''_i\}.
	\end{align*}


Define a function $$\beta \colon P' \cup P'' \to \Z \times [0, f''-1]$$ by setting 
\begin{align*}
\beta(i, v) = \begin{cases}
    \alpha'(i, u) &\text{ if } (i, v) \in P', \\
    \alpha''(i, u) &\text{ if } (i, v) \in P''
\end{cases}
\end{align*}
where $u$ satisfies
\begin{align*}
    \xi'_i - u(p^f - 1) = (p^f - 1)v + \lambda'_i \: &\text{ if } (i, v) \in P', \text{ and }\\
    \xi''_i - u(p^f - 1) = (p^f - 1)v + \lambda'_i \: &\text{ if } (i, v) \in P''.
\end{align*}

Note that there is no ambiguity in the definition of $\beta(i,v)$ when $(i, v) \in P' \cap P''$. Furthermore, $\beta|_{P'}$ and $\beta|_{P''}$ are injective functions and $J^{\text{AH}}_{\tss}(\chi_1, \chi_2) = \beta(P')$ while $J^{\text{AH}}_{\tsss}(\chi_1, \chi_2) = \beta(P'')$.

\begin{lemma}\label{obvious-match}
    $J^{\text{AH}}_{\tss}(\chi_1, \chi_2) = J^{\text{AH}}_{\tsss}(\chi_1, \chi_2)$ if and only if $\beta(P' \smallsetminus P'') = \beta(P'' \smallsetminus P')$.
\end{lemma}
\begin{proof}
    Clear.
\end{proof}




\begin{lemma}\label{reduce-problem-of-J-V}
	
Fix $(i, v) \in P'$. There exists $(j, w) \in P''$ such that $\beta(i,v) = \beta(j,w)$ if and only if the following are true:
\begin{enumerate}
    \item $(p^f-1)v + \lambda'_{i} = p^{\nu} ((p^f-1)w + \lambda'_{j})$ for some $\nu \in \Z$, and
    \item $ i - j \equiv \nu \mod f$.
\end{enumerate}
\end{lemma}
\begin{proof}
Let $\beta(i, v) = (n_1, \kappa_1)$ and $\beta(j, w) = (n_2, \kappa_2)$, and let $\nu_1$ be the $p$-adic valuation of $(p^{f}-1)v + \lambda'_i$ and $\nu_2 = \nu_1 + \nu$ be the $p$-adic valuation of $(p^{f}-1)w + \lambda'_j$. Further, let $i_{n_1}$ and $i_{n_2}$ be the unique representatives in $[0, f'-1]$ of $i - \nu_1$ and $j - \nu_2$ respectively. From the definition of $n_1$ and $n_2$, we see that the equality $n_1 = n_2$ holds if and only if $(p^f-1)v + \lambda'_{i} = p^{\nu} ((p^f-1)w + \lambda'_{j})$. This in turn implies $i_{n_1} = i_{n_2}$ and $$\kappa_1 - \kappa_2 \equiv \frac{(i - j) - \nu}{f'} \mod f''.$$
Therefore, if $n_1 = n_2$ holds, then $\kappa_1 = \kappa_2$ if and only if $i - j \equiv \nu \mod f.$
\end{proof}

\begin{corollary}\label{easy-match}
    Let $(i, v) \in P'$ and $(j, w) \in P''$ satisfy one of the following:
    \begin{enumerate}
        \item $j = i+1$ and $w=pv + z'_{i+1} - y'_{i+1}$, or
        \item $j = i-1$ and $v = pw + z'_{i} - y'_i$.
    \end{enumerate}
    Then $\beta(i,v) = \beta(j,w)$.
\end{corollary}
\begin{proof}
    Plugging into the expression for $\lambda'_i$ and $\lambda'_j$, in the first situation we get $$(p^{f} - 1)v + \lambda'_i = \frac{(p^{f} - 1)w + \lambda'_j}{p}$$ and in the second, we get $$(p^{f} - 1)v + \lambda'_i = p((p^{f} - 1)w + \lambda'_j).$$ An application of \Cref{reduce-problem-of-J-V} finishes the proof.
\end{proof}

\subsubsection{Case 1: \texorpdfstring{$\tss$ and $\tsss$ both satisfy \Cref{ramified-codim-1}\ref{ramified-codim-1-a}.}{1a}}\label{typeII-arb-Case1} 
\begin{proposition}\label{ram-type2-aa-candidates}
    There exists a pair of $I_K$-characters $\chi_1, \chi_2$ so that for some $j, l \in \Z/f\Z$, \begin{align*}
 \chi_1 \qquad & = && \omega_{j} ^{ e-1} \prod_{i \neq j} \omega_i^{s'_i + e} \prod_{i \in \Z/f\Z} \omega_i^{t'_i} &= \qquad & \omega_l^{e-1} \prod_{i \neq l} \omega_i^{s''_i + e} \prod_{i \in \Z/f\Z} \omega_i^{t''_i}, \\
 \chi_2 \qquad &= && \omega_{j}^{s'_j + 1} \prod_{i \in \Z/f\Z} \omega_i^{t'_i} &= \qquad & \omega_{l}^{s''_l + 1} \prod_{i \in \Z/f\Z} \omega_i^{t''_i}
 \end{align*} and $s'_{j}, s'_l \leq p-2$ if and only if after exchanging $\tss$ and $\tsss$ if necessary and translating indices by adding a fixed element in $\Z/f\Z$ if necessary so that $j = f-1$, $\bf{s}'$ and $\bf{s}''$ are as in one of the rows of \Cref{table:ram-type2-aa-candidates} and 
 \begin{align}\label{det-twist-type2-ram-aa}
     s'_{f-1} + 1 + \sum_{i \in \Z/f\Z} t'_i p^{f-1-i}  \equiv (s''_l +1) p^{f-1-l} + \sum_{i \in \Z/f\Z} t''_i p^{f-1-i}
\end{align}
where $l$ is described in \Cref{table:ram-type2-aa-candidates}.
\end{proposition}

\begin{proof}
    By \Cref{proposition-unram-type-1}, the existence of a pair of $I_K$-characters $(\chi_1, \chi_2)$ with highest weight $\ts$ and satisfying the statement of the Proposition is equivalent to the existence of $\ts$ so that for some $j, l \in \Z/f\Z$, the following two statements hold:
    \begin{itemize}
        \item After translating all indices by adding $f-1-j$ so that $j$ translates to $f-1$,
        $\ts$ and $\tss$ satisfy \Cref{type-1-case-1-det-twist} and one of the conditions in rows \ref{item-unram-generic},\ref{item-unram-3} and \ref{item-unram-6} of \Cref{table:unram-type-1}.
        \item After translating all indices in $\Z/f\Z$ by adding $f-1-l$ so that $l$ translates to $f-1$ 
        and replacing 
         $\tss$ with $\tsss$ in the statements in \Cref{type-1-case-1-det-twist} and \Cref{table:unram-type-1},
        $\ts$ and $\tsss$ satisfy \Cref{type-1-case-1-det-twist} and one of the conditions in rows \ref{item-unram-generic},\ref{item-unram-3} and \ref{item-unram-6} of \Cref{table:unram-type-1}.
    \end{itemize}  

   We may assume without loss of generality that $j=f-1$ (otherwise translate all indices by adding $f-1-j$ so that $j$ can be taken to be $f-1$). The equation \Cref{det-twist-type2-ram-aa} follows immediately from considering  \Cref{type-1-case-1-det-twist} or alternatively, the two expressions for $\chi_2$ in the statement of the Proposition. For the rest, we need to compare pairs of (same or distinct) rows from amongst rows \ref{item-unram-generic},\ref{item-unram-3} and \ref{item-unram-6} of \Cref{table:unram-type-1} so that after translating the indices in one of the rows by a fixed element of $\Z/f\Z$ if necessary, the tuples $\bf{s}$ in the two rows match up. The requirement that $\tss$ and $\tsss$ are non-isomorphic necessitates that the only comparisons to be made are between row \ref{item-unram-generic} and translated versions of itself.
Explicitly, $\bf{s}'$, $\bf{s}''$ and $\bf{s}$ are exactly those as in the rows of \Cref{table:ram-type2-aa-candidates}.
\end{proof}

\begin{proposition}\label{summary-ramified-aa}
    For each pair ($\tss$, $\tsss$) as in the statement of \Cref{ram-type2-aa-candidates}, $J_{\tss}^{\text{AH}}(\chi_1, \chi_2) \neq J_{\tsss}^{\text{AH}}(\chi_1, \chi_2)$.
\end{proposition}
\begin{proof}
    Using \Cref{obvious-match}, we need to show that for $\tss$ and $\tsss$ as in each row of \Cref{table:ram-type2-aa-candidates} and \Cref{det-twist-type2-ram-aa}, $\beta(P' \smallsetminus P'') \neq \beta(P'' \smallsetminus P')$.

    We have
    \begin{align*}
	&y'_i = 
	\begin{cases}
 0 &  \text{if  } i \neq f-1,\\
		s'_i + 1 & \text{if  } i = f-1
	\end{cases} &&\qquad z'_i = 
	\begin{cases}
 	s'_i + e &  \text{if  } i \neq f-1 \\
		e-1 & \text{if  } i = f-1
	\end{cases} \\
 	&y''_i = 
	\begin{cases}
 0 &  \text{if  } i \neq l,\\
		s''_i + 1 & \text{if  } i = l
	\end{cases} &&\qquad z''_i = 
	\begin{cases}
 		s''_i + e &  \text{if  } i \neq l \\
		e-1 & \text{if  } i = l
	\end{cases} 
\end{align*} and further,
\begin{align*}
         \mathcal{I}'_i &= 
	\begin{cases}
		\{0\} \cup [s'_i + 1, s'_i + e-1] & \text{if  } i \neq f-1\\
  [0, e-2] &\text{if  } i = f-1
	\end{cases} \\
        \mathcal{I}''_i &= 
	\begin{cases}
		\{0\} \cup [s''_i + 1, s''_i + e-1] & \text{if  } i \neq l \\
  [0, e-2] &\text{if  } i = l.
	\end{cases}\\
 V'_i &=
	\begin{cases}
            [1, e-1] \cup \{z'_i - y'_i\} & \qquad \text{if  } i \neq f-1 \\
		[1, e-1] & \qquad \text{if  } i = f-1
	\end{cases}
 \end{align*}

    Let's first assume the setting of \Cref{table:ram-type2-aa-candidates}\ref{ramified-a-meets-a-i}. 
    Rewriting $z''_i$ in terms of $y'_i, z'_i, y''_i$, we get
    \begin{align*}
	z''_i = 
	\begin{cases}
 		e + s'_i = (z'_i - y'_i)&  \text{if  } i \in [0, m-1] \cup [l+1, r-1] \\
   		e + 1 + s'_i = 1 + (z'_i - y'_i)&  \text{if  } i = m \\
     		e = -(p-1) + (z'_i - y'_i)&  \text{if  } i \in [m+1, l-1] \\
       		e-1 = - p + (z'_i - y'_i) + y''_i&  \text{if  } i = l \\
         		e - 1 + s'_i = -1 + (z'_i - y'_i)&  \text{if  } i = r \\
           		p + e - 1 = p-1 + (z'_i - y'_i) &  \text{if  } i \in [r+1, f-2] \\
		p + e - 2 - s'_{i} = p + (z'_{i} - y'_{i}) & \text{if  } i = f-1
	\end{cases}
\end{align*}
Computing $V''_i$ in terms of $y'_i$ and $z'_i$ for each $i \in \Z/f\Z$, we obtain
\begin{align*}
	V''_i &= 
	\begin{cases}
 [1, e-1] \cup \{z'_i - y'_i\}&  \text{if  } i \in  [0, m-1] \cup [l+1, r-1]\\
 [0, e-2] \cup \{z'_i - y'_i\}&  \text{if  } i = m \\
[0, e-1]&  \text{if  } i \in [m+1, l-1] \\
[1, e-1] &  \text{if  } i = l \\
		[2, e] \cup \{z'_i - y'_i\}&  \text{if  } i = r \\
  [2, e] \cup \{p + z'_{i} - y'_{i}\} &  \text{if  } i \in [r+1, f-2] \\
		[1, e-1] \cup \{p + z'_{i} - y'_{i}\} & \text{if  } i = f-1 \\	
	\end{cases}
\end{align*}

Therefore, \begin{align*}
    P' \smallsetminus P'' &= &&\{(m, e-1)\} \cup \{(i, z'_i - y'_i)\; | \; i \in [m+1, l]\} \cup \{(i,1) \; | \; i \in [r, f-2]\}, \text{ and} \\
    P'' \smallsetminus P' &= &&\{(i, 0)\;|\; i \in [m, l-1]\} \cup \{(r, e)\} \cup \{(i, p+z'_i - y'_i) \; | \; i \in [r+1, f-1]\}.
\end{align*}
Using \Cref{easy-match}, we find that $\beta(\{(i, z'_i - y'_i)\; | \; i \in [m+1, l]\} \cup \{(i,1) \; | \; i \in [r, f-2]\}) = \beta(\{(i, 0)\;|\; i \in [m, l-1]\} \cup \{(i, p+z'_i - y'_i) \; | \; i \in [r+1, f-1]\})$. Therefore, we only need to show that $\beta(m, e-1) \neq \beta(r,e)$. We note that while $m \neq r$, the $p$-adic valuations of $(p^{f}-1)(e-1) + \lambda'_m$ and of $(p^{f}-1)e + \lambda'_r$ are both $0$ using $s'_m \neq p-1$ and $s'_r \neq 0$. An application of \Cref{reduce-problem-of-J-V} finishes this calculation.




Next, assume the setting of \Cref{table:ram-type2-aa-candidates}\ref{ramified-a-meets-a-ii}. We have
\begin{align*}
	z''_i &= 
	\begin{cases}
 e = -(p-1) + (z'_i - y'_i)&  \text{if  } i \in [0, l-1]\\
 		e-1 = - p + (z'_i - y'_i) + y''_i&  \text{if  } i = l \\
		e + s'_i = (z'_i - y'_i)&  \text{if  } i \in [l+1, r-1]\\
  		e - 1 + s'_i = -1 + (z'_i - y'_i)&  \text{if  } i = r \\
	p + e - 1 = p-1 + (z'_i - y'_i) &  \text{if  } i \in [r+1, f-2] \\
		p + e - 1 - s'_{i} = p + (z'_{i} - y'_{i}) + 1 & \text{if  } i = f-1 \\
	\end{cases}\\
V''_j &= 
	\begin{cases}
		[0, e-1]&  \qquad \qquad \text{if  } i \in [0, l-1]\\
  		[1, e-1] & \qquad \qquad \text{if  } i = l \\
    [1, e-1] \cup \{s'_i + e\}& \qquad \qquad  \text{if  } i \in [l+1, r-1] \\
[2, e] \cup \{s'_i + e\}&  \qquad \qquad \text{if  } i = r \\
	[2, e] \cup \{p + z'_{i} - y'_{i}\} &  \qquad \qquad \text{if  } i \in [r+1, f-2] \\			
  [0, e-2] \cup \{p + z'_{i} - y'_{i}\} & \qquad \qquad \text{if  } i = f-1
		\end{cases}
\end{align*}
Applying similar reasoning as before, we need to show that $\beta(f-1, e-1) \neq \beta(r,e)$. Since $s'_{f-1} \neq p-1$ and $s'_{r} \neq 0$, we once again note that while $f-1 \neq r$, the $p$-adic valuations of $(p^f-1)(e-1) + \lambda'_{f-1}$ and $(p^f-1)e + \lambda'_r$ are both $0$. An application of \Cref{reduce-problem-of-J-V} finishes this calculation as well.

Finally, the proof in the setting of \Cref{table:ram-type2-aa-candidates}\ref{ramified-a-meets-a-iv} is similar to the two cases above and boils down to showing that $\beta(f-1, e-1) \neq \beta(l, e)$ using precisely the same strategy as above. We leave the details to the reader.\end{proof}

\subsubsection{Case 2: \texorpdfstring{$\tss$ and $\tsss$ both satisfy \Cref{ramified-codim-1}\ref{ramified-codim-1-b}.}{1b}}\label{typeII-arb-Case2} 
\begin{proposition}\label{ram-type2-bb-candidates}
    There exists a pair of $I_K$-characters $\chi_1, \chi_2$ so that for some $j, l \in \Z/f\Z$, \begin{align*}
 \chi_1 \qquad & = && \omega_{j} ^{s'_j + e-1} \prod_{i \neq j} \omega_i^{s'_i + e} \prod_{i \in \Z/f\Z} \omega_i^{t'_i} &= \qquad & \omega_l^{s''_l + e-1} \prod_{i \neq l} \omega_i^{s''_i + e} \prod_{i \in \Z/f\Z} \omega_i^{t''_i}, \\
 \chi_2 \qquad &= && \omega_{j} \prod_{i \in \Z/f\Z} \omega_i^{t'_i} &= \qquad & \omega_{l} \prod_{i \in \Z/f\Z} \omega_i^{t''_i}
 \end{align*} and $s'_{j}, s'_l \neq 0$ if and only if after exchanging $\tss$ and $\tsss$ if necessary and translating indices by adding a fixed element in $\Z/f\Z$ if necessary so that $j = f-1$, $\bf{s}'$ and $\bf{s}''$ are as in one of the rows of \Cref{table:ram-type2-bb-candidates} and 
 \begin{align}\label{det-twist-type2-ram-bb}
     1 + \sum_{i \in \Z/f\Z} t'_i p^{f-1-i}  \equiv p^{f-1-l} + \sum_{i \in \Z/f\Z} t''_i p^{f-1-i}
\end{align} where $l$ is described in \Cref{table:ram-type2-bb-candidates}.
\end{proposition}

\begin{proof}
    By \Cref{proposition-unram-type-1}, the existence of a pair of $I_K$-characters $(\chi_1, \chi_2)$ with highest weight $\ts$ and satisfying the statement of the Proposition is equivalent to the existence of $\ts$ so that for some $j, l \in \Z/f\Z$, the following two statements hold:
    \begin{itemize}
        \item After translating all indices by adding $f-1-j$ so that $j$ translates to $f-1$,
        $\ts$ and $\tss$ satisfy \Cref{ramified-codim-1-b-chi2} and one of the rows in \Cref{table:ram-type-1b}.
        \item After translating all indices in $\Z/f\Z$ by adding $f-1-l$ so that $l$ translates to $f-1$ 
        and replacing 
         $\tss$ with $\tsss$ in the statements in \Cref{ramified-codim-1-b-chi2} and \Cref{table:ram-type-1b},
        $\ts$ and $\tsss$ satisfy \Cref{ramified-codim-1-b-chi2} and one of the rows in \Cref{table:ram-type-1b}.
    \end{itemize}  

   We may assume without loss of generality that $j=f-1$ (otherwise translate all indices by adding $f-1-j$ so that $j$ can be taken to be $f-1$). The equation \Cref{det-twist-type2-ram-bb} follows immediately from considering  \Cref{ramified-codim-1-b-chi2}  or alternatively, the two expressions for $\chi_2$ in the statement of the Proposition. For the rest, we need to compare pairs of (same or distinct) rows in \Cref{table:ram-type-1b} so that after translating the indices in one of the rows by a fixed element of $\Z/f\Z$ if necessary, the tuples $\bf{s}$ in the two rows match up. The rows that can thus be compared so that we get non-isomorphic $\tss$ and $\tsss$ are \ref{ram-type-1b-i} and \ref{ram-type-1b-i}, \ref{ram-type-1b-i} and \ref{ram-type-1b-ii}, \ref{ram-type-1b-ii} and \ref{ram-type-1b-ii}, and \ref{ram-type-1b-ii} and \ref{ram-type-1b-iii}. Explicitly, $\bf{s}'$, $\bf{s}''$ and $\bf{s}$ are exactly those as in the rows of \Cref{table:ram-type2-bb-candidates}.
\end{proof}

\begin{proposition}\label{summary-ramified-bb}
    For each pair ($\tss$, $\tsss$) as in the statement of \Cref{ram-type2-bb-candidates}, $J_{\tss}^{\text{AH}}(\chi_1, \chi_2) = J_{\tsss}^{\text{AH}}(\chi_1, \chi_2)$ if and only if after exchanging $\tss$ and $\tsss$ if necessary and translating indices by adding a fixed element of $\Z/f\Z$ if necessary, $\bf{s}'$ and $\bf{s}''$ are those that feature in rows \ref{ramified-bb-iii}, \ref{ramified-bb-v} or \ref{ramified-bb-vi} of \Cref{table:ram-type2-bb-candidates}.
\end{proposition}

\begin{proof}
    Using \Cref{obvious-match}, we need to show that for $\tss$ and $\tsss$ satisfying \Cref{det-twist-type2-ram-bb} and $\bf{s}'$ and $\bf{s}''$ as in the rows \ref{ramified-bb-i}, \ref{ramified-bb-ii} and \ref{ramified-bb-iv} of \Cref{table:ram-type2-bb-candidates}, $\beta(P' \smallsetminus P'') \neq \beta(P'' \smallsetminus P')$, while for the remaining rows, $\beta(P' \smallsetminus P'') = \beta(P'' \smallsetminus P')$.

    We have
    \begin{align*}
	&y'_i = 
	\begin{cases}
 0 &  \text{if  } i \neq f-1,\\
		1 & \text{if  } i = f-1
	\end{cases} &&\qquad z'_i = 
	\begin{cases}
 	s'_i + e &  \text{if  } i \neq f-1 \\
		s'_i + e-1 & \text{if  } i = f-1
	\end{cases} \\
 	&y''_i = 
	\begin{cases}
 0 &  \text{if  } i \neq l,\\
		1 & \text{if  } i = l
	\end{cases} &&\qquad z''_i = 
	\begin{cases}
 		s''_i + e &  \text{if  } i \neq l \\
		s''_i + e-1 & \text{if  } i = l
	\end{cases} 
\end{align*} and further,
\begin{align*}
         \mathcal{I}'_i &= 
	\begin{cases}
		\{0\} \cup [s'_i + 1, s'_i + e-1] & \text{if  } i \neq f-1\\
  \{1\} \cup [s'_i + 1, s'_i + e-2] &\text{if  } i = f-1
	\end{cases} \\
        \mathcal{I}''_i &= 
	\begin{cases}
		\{0\} \cup [s''_i + 1, s''_i + e-1] & \text{if  } i \neq l \\
  \{1\} \cup [s''_i + 1, s''_i + e-2] &\text{if  } i = l.
	\end{cases}
 \end{align*}   
Assume first the setting of \Cref{table:ram-type2-bb-candidates}\ref{ramified-bb-i}. We have
\begin{align*}
	z''_i &= 
	\begin{cases}
 e + s'_i = z'_i - y'_i&  \text{if  } i \neq l, f-1 \\
 e+1+s'_i = (z'_i - y'_i) + y''_i& \text{if  } i = l \\
		e - 2 + s'_{i} = z'_{i} - y'_{i} & \text{if  } i = f-1 
	\end{cases} \\
 V'_i &=
	\begin{cases}
 [1, e-1] \cup \{z'_i - y'_i\} & \quad \qquad \text{if  } i \neq f-1 \\
		[1, e-2] \cup \{z'_i - y'_i\} &\quad \qquad \text{if  } i = f-1
	\end{cases} \\
 V''_i &= 
	\begin{cases}
 [1, e-1] \cup \{z'_i - y'_i\} & \quad \qquad \text{if  } i \neq l, f-1\\
 [1, e-2] \cup \{z'_i - y'_i\} & \quad \qquad \text{if  } i =l \\
		[1, e-1] \cup \{z'_i - y'_i\} & \quad \qquad \text{if  } i = f-1 \\
	\end{cases}
\end{align*}
We obtain
$$
    P' \smallsetminus P'' = \{l, e-1\}, \qquad
    P'' \smallsetminus P' = \{f-1, e-1\}.$$
Since $s'_l \neq p-1$ and $s'_{f-1} \neq 1$, the $p$-adic valuations of $(p^{f}-1)(e-1) + \lambda'_l$ and $(p^{f}-1)(e-1) + \lambda'_{f-1}$ are both $0$. Since $l \neq f-1$, \Cref{reduce-problem-of-J-V} shows that $J^{\mathrm{AH}}_{\sigma_{\mathbf{t''}, \mathbf{s''}}}(\chi_1, \chi_2)$. 

Next, assume the setting of \Cref{table:ram-type2-bb-candidates}\ref{ramified-bb-ii}. We have
\begin{align*}
	z''_i &= 
	\begin{cases}
 e + s'_i = z'_i - y'_i&  \text{if  } i \not \in \{l\} \cup [r, f-1]\\
 e+1+s'_i = (z'_i - y'_i) + y''_i&  \text{if  } i = l \\
 e-1+s'_i = -1 + (z'_{i} - y'_{i}) & \text{if  } i =r\\
 p + e - 1 = (p-1) + (z'_{i} - y'_{i}) & \text{if  } i \in [r+1, f-2]\\
p + e - 2 + s'_{i} = p + (z'_{i} - y'_{i}) & \text{if  } i = f-1 \\	
	\end{cases}\\
	V'_i &=
	\begin{cases}
 		[1, e-1] \cup \{z'_i - y'_i\} & \qquad\qquad \text{if  } i \neq f-1\\
		[1, e-1] &\qquad\qquad\text{if  } i = f-1
	\end{cases}\\
	V''_i &= 
	\begin{cases}
 [1, e-1] \cup \{z'_i - y'_i\} & \qquad\text{if  } i \not \in \{l\} \cup [r, f-1] \\
 	[1, e-2] \cup \{z'_i - y'_i\}&  \qquad\text{if  } i = l \\
  [2, e] \cup \{z'_i - y'_i\} & \qquad\text{if  } i = r \\
	[2, e] \cup \{p + z'_i - y'_i\} &\qquad \text{if  } i \in [r+1, f-2] \\
		[1, e-1] \cup \{p + z'_{i} - y'_i\} &\qquad \text{if  } i = f-1 \\
	\end{cases}
\end{align*}

Using \Cref{easy-match}, we only need to show that $\beta(\{(l, e-1), (f-1, z'_{f-1} - y'_{f-1})\}) \neq \beta(\{(r, e), (f-1, e-1)\})$. Assume otherwise. The $p$-adic valuations of both $(p^{f}-1)(e-1) + \lambda'_l$ and $(p^{f}-1)e + \lambda'_r$ equal $0$ while $l \neq r$. Therefore, 
we must have $\beta(f-1, z'_{f-1} - y'_{f-1}) = \beta(r,e)$. Equivalently, by \Cref{reduce-problem-of-J-V}, $\mathrm{val}_{p}((p^{f}-1)(z'_{f-1}-y'_{f-1}) + \lambda'_{f-1}) \equiv f-1-r \mod f$. Hence, 
\begin{align*}
    (p^f-1)e + \lambda'_r &\leq \frac{(p^{f}-1)(z'_{f-1}-y'_{f-1}) + \lambda'_{f-1}}{p^{f-1-r}} \\
    &= \frac{\sum_{j=-1}^{f-2} p^{f-1-j}(z'_{j} - y'_{j})}{p^{f-1-r}} \\
    &= \sum_{j = -1}^{r} p^{r-j}(z'_{j} -y'_j) + \sum_{j = r+1}^{f-2} \frac{z'_j - y'_j}{p^{j-r}}\\
    &= \sum_{j = 0}^{r+1} p^{j}(z'_{r-j} -y'_{r-j}) +  p^{-f} \sum_{j=r+2}^{f-1} p^j(z'_{r-j} - y'_{r-j}) \\
    &<(p^f-1)e + \lambda'_r
\end{align*}
giving rise to a contradiction.

Next, we assume that $\bf{s}'$ and $\bf{s}''$ satisfy one of the \Cref{table:ram-type2-bb-candidates} rows \ref{ramified-bb-iii}, \ref{ramified-bb-v} and \ref{ramified-bb-vi}. With this assumption,
\begin{align*}
	z''_i &= 
	\begin{cases}
 e + s'_i = z'_i - y'_i&  \text{if  } i \in [0, l-1] \\
 e+s'_i =  z'_{i} - y'_{i} & \text{if  } i =l\\
 p + e - 1 = (p-1) + (z'_{i} - y'_{i}) & \text{if  } i \in [l+1, f-2]\\
p + e - 2 + s'_{i} = p + (z'_{i} - y'_{i}) & \text{if  } i = f-1		
	\end{cases} \\
 	V'_i &=
	\begin{cases}
 [1, e-1] \cup \{z'_i - y'_i\} & \qquad \qquad \text{if  } i \not= f-1 \\
		[1, e-1] & \qquad \qquad \text{if  } i = f-1
	\end{cases}\\
 	V''_i &= 
	\begin{cases}
 [1, e-1] \cup \{z'_i - y'_i\} &\qquad  \text{if  } i \in [0, l-1] \\
 [2, e-1] \cup \{z'_i - y'_i\} & \qquad \text{if  } i = l \\
[2, e] \cup \{p + z'_i - y'_i\} &\qquad \text{if  } i \in [l+1, f-2] \\
		[1, e-1] \cup \{p + z'_{i} - y'_i\} & \qquad \text{if  } i = f-1
	\end{cases}
\end{align*}

Using \Cref{easy-match}, we find that $\beta(P' \smallsetminus P'') = \beta(P'' \smallsetminus P')$, as desired. 

Finally, we assume the setting of \Cref{table:ram-type2-bb-candidates}\ref{ramified-bb-iv}. We have
\begin{align*}
    z''_i &= \begin{cases}
        e + s'_i = z'_i - y'_i & \text{if  } i \in [0, m-1] \cup [l+1, r-1] \\
        e+ s'_i + 1 = (z'_i - y'_i) + 1 & \text{if  } i =m\\
        e = (z'_i - y'_i) + 1-p & \text{if  } i \in [m+1, l] \\
        e + s'_i -1  = (z'_i - y'_i) - 1 & \text{if  } i =r \\
        e + p-1 = (z'_i - y'_i) + p & \text{if  } i \in [r+1, f-1]
    \end{cases} \\
    V'_i &= \begin{cases}
        [1, e-1] \cup \{z'_i - y'_i\} & \qquad \text{if  } i \neq f-1 \\
        [1, e-1] & \qquad \text{if  } i = f-1
    \end{cases} \\
    V''_i &= \begin{cases}
         [1, e-1] \cup \{z'_i - y'_i\} & \text{if  } i \in [0, m-1] \cup [l+1, r-1] \\
         [0, e-2] \cup \{z'_i - y'_i \}& \text{if  } i =m\\
         [0, e-1] & \text{if  } i \in [m+1, l-1] \\
         [1, e-1] & \text{if  } i =l\\
         [2, e] \cup \{z'_i - y'_i\} & \text{if  } i =r \\
         [2, e] \cup \{p+ z'_i - y'_i\} & \text{if  } i \in [r+1, f-2] \\
         [1, e-1] \cup \{p + z'_i - y'_i \}& \text{if  } i = f-1
    \end{cases}
\end{align*}

Using \Cref{easy-match}, one simply needs to check that $\beta(m, e-1) \neq \beta(r,e)$. The $p$-adic valuations of both $(p^f-1)(e-1) + \lambda'_{m}$ and $(p^{f}-1)e + \lambda'_r$ are $0$, since $s'_m \neq p-1$ and $s'_r \neq 0$. Therefore, $m \neq r$ implies $\beta(m, e-1) \neq \beta(r,e)$, finishing the proof.
\end{proof}

\subsubsection{Case 3: \texorpdfstring{$\tss$ satisfies \Cref{ramified-codim-1}\ref{ramified-codim-1-a} while $\tsss$ satisfies \Cref{ramified-codim-1}\ref{ramified-codim-1-b}.}{1a-vs-1b}}\label{typeII-arb-Case3}

\begin{proposition}\label{ram-type2-ab-candidates}
    There exists a pair of $I_K$-characters $\chi_1, \chi_2$ so that for some $j, l \in \Z/f\Z$, \begin{align*}
 \chi_1 \qquad & = && \omega_{j} ^{e-1} \prod_{i \neq j} \omega_i^{s'_i + e} \prod_{i \in \Z/f\Z} \omega_i^{t'_i} &= \qquad & \omega_l^{s''_l + e-1} \prod_{i \neq l} \omega_i^{s''_i + e} \prod_{i \in \Z/f\Z} \omega_i^{t''_i}, \\
 \chi_2 \qquad &= && \omega_j^{s'_j + 1} \prod_{i \in \Z/f\Z} \omega_i^{t'_i} &= \qquad & \omega_{l} \prod_{i \in \Z/f\Z} \omega_i^{t''_i},
 \end{align*}$s'_{j} \leq p-2$ and $s'_l \neq 0$ if and only if after exchanging $\tss$ and $\tsss$ if necessary and translating indices by adding a fixed element in $\Z/f\Z$ if necessary so that $j = f-1$, $\bf{s}'$ and $\bf{s}''$ are as in one of the rows of \Cref{table:ram-type2-ab-candidates} and 
 \begin{align}\label{det-twist-type2-ram-ab}
     s'_{f-1} + 1 + \sum_{i \in \Z/f\Z} t'_i p^{f-1-i}  \equiv p^{f-1-l} + \sum_{i \in \Z/f\Z} t''_i p^{f-1-i}
\end{align} where $l$ is described in \Cref{table:ram-type2-ab-candidates}.
\end{proposition}

\begin{proof}
    By \Cref{proposition-unram-type-1}, the existence of a pair of $I_K$-characters $(\chi_1, \chi_2)$ with highest weight $\ts$ and satisfying the statement of the Proposition is equivalent to the existence of $\ts$ so that for some $j, l \in \Z/f\Z$, the following two statements hold:
    \begin{itemize}
        \item After translating all indices by adding $f-1-j$ so that $j$ translates to $f-1$,
        $\ts$ and $\tss$ satisfy \Cref{type-1-case-1-det-twist} and one of the rows \ref{item-unram-generic}, \ref{item-unram-3} and \ref{item-unram-6} in \Cref{table:unram-type-1}.
        \item After translating all indices in $\Z/f\Z$ by adding $f-1-l$ so that $l$ translates to $f-1$ 
        and replacing 
         $\tss$ with $\tsss$ in the statements in \Cref{ramified-codim-1-b-chi2} and \Cref{table:ram-type-1b},
        $\ts$ and $\tsss$ satisfy \Cref{ramified-codim-1-b-chi2} and one of the rows in \Cref{table:ram-type-1b}.
    \end{itemize}  

   We may assume without loss of generality that $j=f-1$ (otherwise translate all indices by adding $f-1-j$ so that $j$ can be taken to be $f-1$). The equation \Cref{det-twist-type2-ram-ab} follows immediately from considering  \Cref{type-1-case-1-det-twist,ramified-codim-1-b-chi2}  or alternatively, the two expressions for $\chi_2$ in the statement of the Proposition. For the rest, we need to compare rows \ref{item-unram-generic}, \ref{item-unram-3} and \ref{item-unram-6} in \Cref{table:unram-type-1} with the rows of \Cref{table:ram-type-1b} so that after translating the indices in the rows of \Cref{table:ram-type-1b} by a fixed element of $\Z/f\Z$ if necessary, the tuples $\bf{s}$ in the two rows match up. The rows of \Cref{table:unram-type-1} and \Cref{table:ram-type-1b} that can thus be compared are respectively \ref{item-unram-generic} and \ref{ram-type-1b-i},  \ref{item-unram-generic} and \ref{ram-type-1b-ii}, \ref{item-unram-3} and \ref{ram-type-1b-i}, \ref{item-unram-3} and \ref{ram-type-1b-ii}, \ref{item-unram-6} and \ref{ram-type-1b-ii}, and finally, \ref{item-unram-6} and \ref{ram-type-1b-iii}. Explicitly, $\bf{s}'$, $\bf{s}''$ and $\bf{s}$ are exactly those as in the rows of \Cref{table:ram-type2-ab-candidates}.
\end{proof}

\begin{proposition}\label{summary-ramified-ab}
    For each pair ($\tss$, $\tsss$) as in the statement of \Cref{ram-type2-bb-candidates}, $J_{\tss}^{\text{AH}}(\chi_1, \chi_2) = J_{\tsss}^{\text{AH}}(\chi_1, \chi_2)$ if and only if after translating indices by adding a fixed element of $\Z/f\Z$ if necessary, $\bf{s}'$ and $\bf{s}''$ are those that feature in rows \ref{ramified-ab-ii}, \ref{ramified-ab-vi}, \ref{ramified-ab-vii}, \ref{ramified-ab-viii}, \ref{ramified-ab-ix}, \ref{ramified-ab-x} or \ref{ramified-ab-xi}  of \Cref{table:ram-type2-ab-candidates}.
\end{proposition}

\begin{proof}
    Using \Cref{obvious-match}, we need to show that for $\tss$ and $\tsss$ satisfying \Cref{det-twist-type2-ram-bb} and $\bf{s}'$ and $\bf{s}''$ as in the rows \ref{ramified-ab-i}, \ref{ramified-ab-iii}, \ref{ramified-ab-iv} and \ref{ramified-ab-v} of \Cref{table:ram-type2-ab-candidates}, $\beta(P' \smallsetminus P'') \neq \beta(P'' \smallsetminus P')$, while for the remaining rows, $\beta(P' \smallsetminus P'') = \beta(P'' \smallsetminus P')$.

  We have
    \begin{align*}
	&y'_i = 
	\begin{cases}
 0 &  \text{if  } i \neq f-1,\\
		s'_{i} + 1 & \text{if  } i = f-1
	\end{cases} &&\qquad z'_i = 
	\begin{cases}
 	s'_i + e & \qquad \text{if  } i \neq f-1 \\
		e-1 & \qquad \text{if  } i = f-1
	\end{cases} \\
 	&y''_i = 
	\begin{cases}
 0 &  \qquad \text{if  } i \neq l,\\
		1 & \qquad \text{if  } i = l
	\end{cases} &&\qquad z''_i = 
	\begin{cases}
 		s''_i + e &  \text{if  } i \neq l \\
		s''_i + e-1 & \text{if  } i = l
	\end{cases} 
\end{align*} and further,
\begin{align*}
         \mathcal{I}'_i &= 
	\begin{cases}
		\{0\} \cup [s'_i + 1, s'_i + e-1] & \text{if  } i \neq f-1\\
  [0, e-2] &\text{if  } i = f-1
	\end{cases} \\
        \mathcal{I}''_i &= 
	\begin{cases}
		\{0\} \cup [s''_i + 1, s''_i + e-1] & \text{if  } i \neq l \\
  \{1\} \cup [s''_i + 1, s''_i + e-2] &\text{if  } i = l.
	\end{cases} \\
 V'_i &= 
	\begin{cases}
		[1, e-1] \cup \{z'_i - y'_i\} & \text{if  } i \neq f-1\\
  [1, e-1] &\text{if  } i = f-1
	\end{cases} \\
 \end{align*}   

First, assume the setting of \Cref{table:ram-type2-ab-candidates}\ref{ramified-ab-i}.

We have
\begin{align*}
	z''_i &= 
	\begin{cases}
 e + s'_i = z'_i - y'_i&  \text{if  } i \in [0, l-1] \cup [l+1, r-1] \\
 e+1+s'_i = (z'_i - y'_i) + y''_i& \text{if  } i = l \\
		e -1 + s'_{i} = (z'_{i} - y'_{i}) - 1& \text{if  } i = r \\
  e + p-1 = (z'_i -y'_i) + p-1 &\text{if  } i \in [r+1, f-2] \\
  e + p - 2 - s'_i = (z'_i - y'_i) + p &\text{if  } i = f-1
	\end{cases} \\
 V''_i &= 
	\begin{cases}
 [1, e-1] \cup \{z'_i - y'_i\} & \qquad \text{if  } i \in [0,l-1] \cup [l+1, r-1]\\
 [1, e-2] \cup \{z'_i - y'_i\} & \qquad \text{if  } i =l\\
 [2, e] \cup \{z'_i - y'_i\} & \qquad \text{if  } i =r \\
 [2, e] \cup \{p+z'_i - y'_i\} & \qquad \text{if  } i \in [r+1, f-2] \\
		[1, e-1] \cup \{p+z'_i - y'_i\} & \qquad \text{if  } i = f-1 \\
	\end{cases}
\end{align*}

We need to check that $\beta(l, e-1) \neq \beta(r, e)$. This follows from \Cref{easy-match} and the fact that $s'_l \neq p-1$ and $s'_r \neq 0$ implying that the $p$-adic valuations of $(p^f - 1)(e-1) + \lambda'_l$ and of $(p^f - 1)e + \lambda'_r$ are both $0$.

Next, assume the setting of \Cref{table:ram-type2-ab-candidates}\ref{ramified-ab-ii}, \ref{ramified-ab-vi}, \ref{ramified-ab-viii}, \ref{ramified-ab-ix} or \ref{ramified-ab-xi}.
We have
\begin{align*}
	z''_i &= 
	\begin{cases}
 e + s'_i = z'_i - y'_i&  \text{if  } i \in [0, l-1] \\
 e+s'_i = z'_i - y'_i & \text{if  } i = l \\
  e + p-1 = (z'_i -y'_i) + p-1 &\text{if  } i \in [l+1, f-2] \\
  e + p - 2 - s'_i = (z'_i - y'_i) + p &\text{if  } i = f-1
	\end{cases} \\
 V''_i &= 
	\begin{cases}
 [1, e-1] \cup \{z'_i - y'_i\} & \qquad \text{if  } i \in [0,l-1]\\
 [2, e-1] \cup \{z'_i - y'_i\} & \qquad \text{if  } i =l \\
 [2, e] \cup \{p+z'_i - y'_i\} & \qquad \text{if  } i \in [l+1, f-2] \\
		[1, e-1] \cup \{p+z'_i - y'_i\} & \qquad \text{if  } i = f-1 \\
	\end{cases}
\end{align*}
An application of \Cref{easy-match} shows that $\beta(P'\smallsetminus P'') = \beta(P'' \smallsetminus P')$, as desired.

Assuming the setting of \Cref{table:ram-type2-ab-candidates}\ref{ramified-ab-iii}, we have
\begin{align*}
	z''_i &= 
	\begin{cases}
 e + s'_i = z'_i - y'_i&  \text{if  } i \in [0, r-1] \\
 e+s'_i-1 = (z'_i - y'_i) -1 & \text{if  } i = r \\
  e + p-1 = (z'_i -y'_i) + p-1 &\text{if  } i \in [r+1, f-2] \\
  e + p - 1 - s'_i = (z'_i - y'_i) + p + y''_i &\text{if  } i = l = f-1
	\end{cases} \\
 V''_i &= 
	\begin{cases}
 [1, e-1] \cup \{z'_i - y'_i\} & \qquad \text{if  } i \in [0,r-1]\\
 [2, e] \cup \{z'_i - y'_i\} & \qquad \text{if  } i =r \\
 [2, e] \cup \{p+z'_i - y'_i\} & \qquad \text{if  } i \in [r+1, f-2] \\
		[1, e-2] \cup \{p+z'_i - y'_i\} & \qquad \text{if  } i = l = f-1 \\
	\end{cases}
\end{align*}

We find that $\beta(f-1, e-1) \neq \beta(r, e)$ as $s'_{f-1} \neq p-1$ and $s'_r \neq 0$ imply that the $p$-adic valuations of $(p^f - 1)(e-1) + \lambda'_{f-1}$ and of $(p^f - 1)e + \lambda'_r$ are both $0$, while $r \neq f-1$. Therefore, $\beta(P'\smallsetminus P'') \neq \beta(P'' \smallsetminus P')$

Assuming the setting of \Cref{table:ram-type2-ab-candidates}\ref{ramified-ab-iv}, we have
\begin{align*}
	z''_i &= 
	\begin{cases}
 e + s'_i = z'_i - y'_i&  \text{if  } i \in [0, m-1] \cup [l+1, r-1] \\
 e + s'_i + 1 = z'_i - y'_i + 1&  \text{if  } i = m \\
 e = (z'_i - y'_i) - p + 1&  \text{if  } i \in [m+1, l-1] \\
 e = (z'_i - y'_i) + y''_i - p &  \text{if  } i =l \\
 e+s'_i-1 = (z'_i - y'_i) -1 & \text{if  } i = r \\
  e + p-1 = (z'_i -y'_i) + p-1 &\text{if  } i \in [r+1, f-2] \\
  e + p - 1 - s'_i = (z'_i - y'_i) + p + y''_i &\text{if  } i = f-1
	\end{cases} \\
 V''_i &= 
	\begin{cases}
 [1, e-1] \cup \{z'_i - y'_i\} & \quad \qquad \text{if  } i \in [0, m-1] \cup [l+1, r-1]\\
  [0, e-2] \cup \{z'_i - y'_i\} & \quad \qquad \text{if  } i =m\\
   [0, e-1] & \quad \qquad \text{if  } i \in [m+1, l-1]\\
    [1, e-1] & \quad \qquad \text{if  } i =l\\
 [2, e] \cup \{z'_i - y'_i\} & \quad \qquad \text{if  } i =r \\
 [2, e] \cup \{p+z'_i - y'_i\} & \quad \qquad \text{if  } i \in [r+1, f-2] \\
		[1, e-1] \cup \{p+z'_i - y'_i\} & \quad \qquad \text{if  } i = f-1 \\
	\end{cases}
\end{align*}

After applying \Cref{easy-match}, we need to show that $\beta(m, e-1) \neq \beta(r,e)$. Since $s'_m \neq p-1$ and $s'_r \neq 0$, the $p$-adic valuations of $(p^f - 1)(e-1) + \lambda'_m$ and $(p^{f}-1)e + \lambda'_r$ are both $0$. Thus, from the inequality $m \neq r$, we obtain $\beta(m, e-1) \neq \beta(r,e)$.

Now, assume the setting of \Cref{table:ram-type2-ab-candidates}\ref{ramified-ab-v}. We have
\begin{align*}
	z''_i &= 
	\begin{cases}
 e  = (z'_i - y'_i) - p + 1&  \text{if  } i \in [0, l-1] \\
 e = (z'_i - y'_i) - p + y''_i &  \text{if  } i = l \\
 e + s'_i = (z'_i - y'_i) &  \text{if  } i \in [l+1, r-1] \\
 e+s'_i-1 = (z'_i - y'_i) -1 & \text{if  } i = r \\
  e + p-1 = (z'_i -y'_i) + p-1 &\text{if  } i \in [r+1, f-2] \\
  e + p - 1 - s'_i = (z'_i - y'_i) + p + 1 &\text{if  } i = f-1
	\end{cases} \\
 V''_i &= 
	\begin{cases}
 [0, e-1] & \quad \qquad \text{if  } i \in [0, l-1]\\
  [1, e-1] & \quad \qquad \text{if  } i =l\\
   [1, e-1] \cup \{z'_i - y'_i\} & \quad \qquad \text{if  } i \in [l+1, r-1]\\
 [2, e] \cup \{z'_i - y'_i\} & \quad \qquad \text{if  } i =r \\
 [2, e] \cup \{p+z'_i - y'_i\} & \quad \qquad \text{if  } i \in [r+1, f-2] \\
		[0, e-2] \cup \{p+z'_i - y'_i\} & \quad \qquad \text{if  } i = f-1 \\
	\end{cases}
\end{align*}
After applying \Cref{easy-match}, we need to show that $\beta(f-1, e-1) \neq \beta(r,e)$. This follows from the fact that $s'_{f-1} \neq p -1$ and $s'_r \neq 0$ and similar argument as before.

Finally, assume the setting of \Cref{table:ram-type2-ab-candidates}\ref{ramified-ab-vii} or \ref{ramified-ab-x}. We have
\begin{align*}
	z''_i &= 
	\begin{cases}
 e + p-1 = (z'_i - y'_i) + p - 1&  \text{if  } i \in [0, f-2] \\
  e + p - 2 - s'_i = (z'_i - y'_i) + y''_i + p - 1 &\text{if  } i = f-1
	\end{cases} \\
 V''_i &= 
	\begin{cases}
 [2, e] \cup \{p + z'_i - y'_i\} & \quad \qquad \text{if  } i \in [0, f-2]\\
		[2, e-1] \cup \{p+z'_i - y'_i\} & \quad \qquad \text{if  } i = f-1 \\
	\end{cases}
\end{align*}

An application of \Cref{easy-match} shows that $\beta(P' \smallsetminus P'') = \beta(P'' \smallsetminus P')$, finishing the proof.
\end{proof}

\begin{remark}\label{type-II-regular}
	Let $e>1$, $f>1$ and $\tss$ and $\tsss$ be a pair of weakly regular Serre weights. An examination of \Cref{table:ram-type2-bb-candidates,table:ram-type2-ab-candidates} shows that there exists a type II intersection between $\tss$ and $\tsss$ if and only if $\mathrm{Ext}^1_{\Fbar[\GL_2(k)]}(\sigma_{\mathbf{t'}, \mathbf{s'}}, \sigma_{\mathbf{t''}, \mathbf{s''}}) \neq 0$ via the criterion described in \Cref{ext-gamma}\ref{ext-k-i}. Moreover, the relationship between $\tss$ and $\tsss$ is asymmetric, and therefore, there exists at most one pair of $I_K$-characters witnessing a type II intersection between them.

\end{remark}

\section{Conclusion}\label{conclusion}

From the findings of \Cref{type1,type2}, we get our main results:

\begin{theorem}\label{Ext-group-intersection}
		Let $\ts$ and $\tss$ be a pair of non-isomorphic, non-Steinberg Serre weights. Then
  \begin{align}\label{one-way}
  \mathrm{Ext}^1_{\Fbar[\GL_2(k)]}(\ts, \tss) \neq 0 \implies \dim \cX_{\ts} \cap \cX_{\tss} = [K:\Qp]-1.\end{align} If $\ts$ and $\tss$ are further assumed to be weakly regular, then \Cref{one-way} upgrades to $$\mathrm{Ext}^1_{\Fbar[\GL_2(k)]}(\ts, \tss) \neq 0 \iff \dim \cX_{\ts} \cap \cX_{\tss} = [K:\Qp]-1.$$
  \end{theorem}

\begin{proof}
Consider the explicit numerical criterion for type I and II intersections in \Cref{prop:tot-ram-type1a,prop:tot-ram-type1c,type-II-tot-ram} when $f=1$, in \Cref{proposition-unram-type-1,type-2-unram} when $f>1$ and $e=1$, and in \Cref{proposition-unram-type-1,summary-ramified-codim-1-b,summary-ramified-aa,summary-ramified-bb,summary-ramified-ab} when $f>1$ and $e>1$.

The result follows from considering these numerical criteria in conjunction with \Cref{criteria-codim-arbitrary} as well as the criteria for 
$\mathrm{Ext}^1_{\Fbar[\GL_2(k)]}(\sigma_{\mathbf{t}, \mathbf{s}}, \sigma_{\mathbf{t'}, \mathbf{s'}})$ to be non-zero given in \Cref{ext-gamma,H1-criteria} and \Cref{ext-GL_2(O_K)-unramified-generic,GL_2(O_K)-ramified-splits}. 

We note that a complete (and non-empty) list of pairs of Serre weights $\ts$ and $\tss$ satisfying $\dim \cX_{\ts} \cap \cX_{\tss} = [K:\Qp]-1$ and $\mathrm{Ext}^1_{\Fbar[\GL_2(k)]}(\sigma_{\mathbf{t}, \mathbf{s}}, \sigma_{\mathbf{t'}, \mathbf{s'}}) = 0$ (and so, necessarily, one of $\ts$ and $\tss$ is not weakly regular and $f>1$) is obtained by combining data from \Cref{table:unram-type-1,table:unram-type-2} when $e=1$, and \Cref{table:unram-type-1,table:ram-type-1b} along with the rows marked with an asterisk in \Cref{table:ram-type2-bb-candidates,table:ram-type2-ab-candidates} when $e>1$. 
\end{proof}

\begin{theorem}\label{thm-number}
		Let $\ts$ and $\tss$ be a pair of weakly regular and non-isomorphic Serre weights, so that $\dim \mathcal{X}_{\sigma_{\mathbf{t}, \mathbf{s}}} \cap \mathcal{X}_{\sigma_{\mathbf{t'}, \mathbf{s'}}} = [K:\Qp]-1$. Then the following are true:
  \begin{enumerate}
      \item  Let $n$ be the number of top-dimensional irreducible components in $\mathcal{X}_{\sigma_{\mathbf{t}, \mathbf{s}}} \cap \mathcal{X}_{\sigma_{\mathbf{t'}, \mathbf{s'}}}$, that is, irreducible components of dimension $[K:\Qp]-1$. If $e=1$, then $n=1$. If $e>1$, then \begin{align*}
		        n = \begin{cases}
		            2 &\text{ if }  \mathrm{Ext}^1_{\Fbar[\GL_2(k)]}(\sigma_{\mathbf{t}, \mathbf{s}}, \sigma_{\mathbf{t'}, \mathbf{s'}}) \neq 0, \\
		            1 &\text{ if }  \mathrm{Ext}^1_{\Fbar[\GL_2(k)]}(\sigma_{\mathbf{t}, \mathbf{s}}, \sigma_{\mathbf{t'}, \mathbf{s'}}) = 0.
		        \end{cases}
		    \end{align*}
      \item Let $\cZ$ be a top-dimensional irreducible component in $\mathcal{X}_{\sigma_{\mathbf{t}, \mathbf{s}}} \cap \mathcal{X}_{\sigma_{\mathbf{t'}, \mathbf{s'}}}$. If $e=1$, then $\cZ \subset \cX_{\sigma}$ for a non-Steinberg Serre weight $\sigma$ implies that $\sigma$ is isomorphic to either $\ts$ or $\tss$. If either $e>1$, $f=1$ and $s, s'< p-3$, or $e>1$ and $f>1$, then there exists a non-Steinberg $\sigma$ not isomorphic to either $\ts$ or $\tss$ so that $\cZ \subset \cX_{\ts} \cap \cX_{\tss} \cap \cX_{\sigma}$.
  \end{enumerate}

\end{theorem}
\begin{proof}
   The first statement follows from \Cref{rem:tot-ram-type1a-number} when $f=1$ and $e=1$, \Cref{rem:tot-ram-type1a-number,rem:tot-ram-type1c-number,summary-totally-ramified-type-2} when $f=1$ and $e>1$, and \Cref{type-1-summary,rem:summary-regular-type2-unram,type-II-regular} when $f>1$, by comparing with \Cref{criteria-codim-arbitrary} and \Cref{ext-gamma}.

For the second statement, suppose first that $K \neq \Qp$. The proof of \Cref{geometry-proposition-dimension-intersection} shows that there exist $G_K$-characters
$\tilde{\chi}_1$ and $\tilde{\chi}_2$ lifting $I_K$-characters $\chi_1$ and $\chi_2$ respectively so that
$\cZ$ is in the closure of a family $\cF_{\tilde{\chi}_2^{-1}, \tilde{\chi}_1^{-1}}$ contained in $|\cX_{\ts} \cap \cX_{\tss}|$. Using the proof of \Cref{criteria-codim-arbitrary}, the existence of such a top-dimensional component $\cZ$ and such $G_K$-characters $\tilde{\chi}_1$ and $\tilde{\chi_2}$ is equivalent to $\ts$ and $\tss$ having either a type I or a type II intersection witnessed by $(\chi_1, \chi_2)$. 

If it is a type II intersection, let $\sigma$ be the highest weight (see \Cref{defn:highest-wt}) for the pair $(\chi_1, \chi_2)$. Then $\sigma$ is non-Steinberg and not isomorphic to either $\ts$ or $\tss$, and moreover, $\cZ \subset \cX_{\sigma}$ as described in the proof of \Cref{prop:triple}. When $e=1$, $\ts$ and $\tss$ do not have a type II intersection by \Cref{rem:summary-regular-type2-unram}. This deals with the case of type II intersections between $\ts$ and $\tss$, and we may now assume that the pair $(\chi_1, \chi_2)$ witnesses a type I intersection between $\ts$ and $\tss$, with highest weight $\ts$. The existence of a non-Steinberg $\sigma$ not isomorphic to either $\ts$ or $\tss$ and such that $\cZ \subset \cX_{\sigma}$ is equivalent to $(\chi_1, \chi_2)$ witnessing a type II intersection between $\tss$ and $\sigma$. The desired statement then follows from comparing \Cref{prop:tot-ram-type1a,prop:tot-ram-type1c} with \Cref{type-II-tot-ram} when $f=1$, \Cref{table:unram-type-1}\ref{item-unram-generic} with \Cref{table:unram-type-2} when $f>1$ and $e=1$, and \Cref{table:unram-type-1}\ref{item-unram-generic} and \Cref{table:ram-type-1b}\ref{ram-type-1b-i} with \Cref{table:ram-type2-ab-candidates}\ref{ramified-ab-ii} when $f>1$ and $e>1$.

Next, suppose $K=\Qp$. If $\cZ$ is in the closure of a family $\cF_{\tilde{\chi}_2^{-1}, \tilde{\chi}_1^{-1}}$ as above, then the argument above goes through unchanged. Otherwise, by the proof of \Cref{geometry-proposition-dimension-intersection}, there exists an irreducible two-dimensional $G_K$-representation $\rhobar$ so that $\cZ$ is in the scheme-theoretic image of the map $f_{\rhobar}$ defined in \Cref{defn:irreducible-maps}. By \Cref{wt-irred} and \Cref{irred-intersection}, the scheme-theoretic image of $f_{\rhobar}$ can be contained in at most two irreducible components of $\cX$ and therefore, there does not exist $\sigma$ non-Steinberg and not isomorphic to either $\ts$ or $\tss$ such that $\cZ \subset \cX_{\sigma}$. This finishes the proof. \end{proof}

\begin{center}
 \begin{table}[hp]
 \begin{threeparttable}
 \resizebox{\textwidth}{!}{%
\begin{tabular}{ |N|l|l|l| }
\hline
\multicolumn{1}{|c|}{} & \multicolumn{1}{c|}{$\mathbf{s},\mathbf{s}'$} & \multicolumn{1}{c|}{Ext criterion} & \multicolumn{1}{c|}{$\Z/f\Z \smallsetminus \mu(J'_{\mathrm{max}})$ if $e=1$}\\
\hline
\label{item-unram-generic} & 
\begin{tabular}{l}
For some $i \in [1, f-1]$, \\
$(s_{f-1-i}, s_{f-i}, \dots, s_{f-2}, s_{f-1})=$ \\
$(\in [0, p-2], p-1, \dots, p-1, \in [0, p-2])$;\\[0.15cm]
$(s'_{f-1-i}, s'_{f-i}, \dots, s'_{f-2}, s'_{f-1})=$ \\
$(s_{f-1-i} + 1, 0, \dots, 0,  p-s_{f-1}-2)$ \\
\end{tabular} & \begin{tabular}{l}
Iff $i=1$,\\
\Cref{ext-gamma}\ref{ext-k-i}
\end{tabular} & \begin{tabular}{l} $f-i$ \end{tabular}\\
\hline
\label{item-unram-3} & 
\begin{tabular}{l}
$(s_{0}, \dots, s_{f-2}, s_{f-1})=$ \\
$(p-1, \dots, p-1, \in [0, p-3])$;\\[0.15cm]
$(s'_0, \dots, s'_{f-2}, s'_{f-1}) =(0, \dots, 0,  p-s_{f-1}-3)$ \\
\end{tabular} & \begin{tabular}{l} None \end{tabular}  & \begin{tabular}{l} $0$ \end{tabular}\\
\hline
\label{item-unram-6} & 
\begin{tabular}{l}
$(s_{0}, \dots, s_{f-3}, s_{f-2}, s_{f-1})= $\\
$(p-1, \dots, p-1, p-2, p-1)$;\\[0.15cm]
$(s'_0, \dots, s'_{f-3}, s'_{f-2}, s'_{f-1})= (0, \dots, 0,  0, p-2)$ 
\end{tabular} & \begin{tabular}{l} Iff $f=2$,\\
\Cref{ext-gamma}\ref{ext-k-i} \end{tabular} &\begin{tabular}{l} $f-1$ \end{tabular}   \\
\hline
\label{item-unram-7} & 
\begin{tabular}{l}
$e=1$,\\
$(s_{f-2}, s_{f-1}) = (\in [0, p-3],p-1)$; \\[0.15cm]
$(s'_{f-2}, s'_{f-1}) = (s_{f-2} + 2, p-1)$
\end{tabular} & \begin{tabular}{l} ? \end{tabular} & \begin{tabular}{l} $f-1$ \end{tabular}\\
\hline
\label{item-unram-5} & 
\begin{tabular}{l}
$f>2$, $e=1$ and for some $i \in [2, f-1]$,\\
$(s_{f-1-i}, s_{f-i}, \dots, s_{f-3}, s_{f-2}, s_{f-1}) =$\\
$ (\in [0, p-2], p-1, \dots, p-1, p-1, p-1)$; \\[0.15cm]
$(s'_{f-1-i}, s'_{f-i}, \dots, s'_{f-3}, s'_{f-2}, s'_{f-1}) =$ \\
$ (s_{f-1-i} + 1, 0, \dots, 0,  1, p-1)$
\end{tabular} & \begin{tabular}{l} None \end{tabular} &\begin{tabular}{l} $f-i$ \end{tabular}  \\
\hline
\label{item-unram-4} & 
\begin{tabular}{l}
$e=1$,\\
$(s_{0}, \dots, s_{f-3}, s_{f-2}, s_{f-1}) =$\\
$ (p-1, \dots, p-1, p-1, p-2)$; \\[0.15cm]
$(s'_{0}, \dots, s'_{f-3}, s'_{f-2}, s'_{f-1}) =$\\
$(0, \dots, 0, 1, p-1)$
\end{tabular} & \begin{tabular}{l} None \end{tabular} & \begin{tabular}{l} $0$ \end{tabular} \\
\hline
\end{tabular}}
\caption{Type 1 intersection when $f>1$, highest weight is $\ts$ and $\tss$ satisfies \Cref{ramified-codim-1}\ref{ramified-codim-1-a} or \ref{unramified-codim-1} with $j=f-1$. The column `Ext criterion' indicates which, if any, of the criteria for $\Ext^{1}_{\GL_2(\cO_K)}(\ts, \tss)$ to be non-zero hold. The notation ? indicates that it's not known if the group of extensions between the Serre weights is non-zero. By \Cref{ramified-codim-1}, $J'_{\mathrm{max}} = \Z/f\Z \smallsetminus \{f-1\}$ and the last column computes $\mu(J'_{\mathrm{max}})$ if $e=1$ for later use in the proof of \Cref{type-2-unram}.}\label{table:unram-type-1}
\end{threeparttable}
\end{table}
\end{center}
\setcounter{rownumber}{0}

 \begin{center}
 \begin{table}[hp]
 \begin{threeparttable}
 \resizebox{\textwidth}{!}{%
\begin{tabular}{ |N|l|l| }
\hline
\multicolumn{1}{|c|}{} & \multicolumn{1}{c|}{$\mathbf{s},\mathbf{s}'$} & \multicolumn{1}{c|}{Ext criterion} \\
\hline
\label{ram-type-1b-i} & 
\begin{tabular}{l}
$s_{f-1} \leq p-3$; \: \: $s'_{f-1} = s_{f-1} + 2$ \\
\end{tabular} & \begin{tabular}{l}
\Cref{H1-criteria}
\end{tabular}\\
\hline
\label{ram-type-1b-ii}
& \begin{tabular}{l}
For some $i \in [1, f-1]$,\\
$(s_{f-1-i}, s_{f-i}, \dots, s_{f-2}, s_{f-1})=(\in[0, p-2], p-1, \dots, p-1, p-1)$;\\[0.15cm]
$(s'_{f-1-i}, s'_{f-i}, \dots, s'_{f-2}, s'_{f-1}) =(s_{f-1-i}+ 1, 0, \dots, 0,  1)$ \\
\end{tabular} & \begin{tabular}{l} None \end{tabular}  \\
\hline
\label{ram-type-1b-iii}
& \begin{tabular}{l}
$(s_{0}, \dots, s_{f-2}, s_{f-1})= $
$(p-1, \dots, p-1, p-2)$;\\[0.15cm]
$(s'_0, \dots, s'_{f-2}, s'_{f-1})= $
$(0, \dots, 0,  1)$ \\
\end{tabular} & \begin{tabular}{l} None \end{tabular}   \\
\hline
\end{tabular}}
\caption{Type 1 intersection when $f>1$, $e>1$, the highest weight is $\ts$ and $\tss$ satisfies \Cref{ramified-codim-1}\ref{ramified-codim-1-b} with $j=f-1$. The column `Ext criterion' indicates which, if any, of the criteria for $\Ext^{1}_{\GL_2(\cO_K)}(\ts, \tss)$ to be non-zero hold.}\label{table:ram-type-1b}
\end{threeparttable}
\end{table}
\end{center}
\setcounter{rownumber}{0}

 \begin{table}[hp]
 \begin{threeparttable}
 \centering
 \resizebox{\textwidth}{!}{%
\begin{tabular}{ |N|l|l|l| }
\hline
\multicolumn{1}{|c|}{} & \multicolumn{1}{c|}{$\mathbf{s}',\mathbf{s}'', \bf{s}$} &  \multicolumn{1}{c|}{\Cref{table:unram-type-1} rows} & \multicolumn{1}{c|}{\begin{tabular}{l}
     Ext criterion 
\end{tabular}}\\
\hline
\label{item-1-with-4} & 
\begin{tabular}{l}
$f>2$ and for some $l \in [1,f-2]$, \\
$(s'_{l-1}, s'_l, \dots, s'_{f-2}, s'_{f-1})=(\in [1,p-2], 0, \dots, 0, \in [0, p-2])$;\\[0.15cm]
$(s''_{l-1}, s''_l, \dots, s''_{f-2}, s''_{f-1})=$\\
$(s'_{l-1} + 1, p-1 \dots, p-1, p-s'_{f-1}-2)$;\\[0.15cm]
$(s_{l-1}, s_l, \dots, s_{f-2}, s_{f-1}) = $\\
$(s'_{l-1}-1, p-1, \dots, p-1, p-s'_{f-1}-2)$
\\
\end{tabular} & \begin{tabular}{l} \ref{item-unram-generic} and \ref{item-unram-7} \end{tabular} & \begin{tabular}{l}
None
\end{tabular}\\
\hline
\label{item-1-with-5} & 
\begin{tabular}{l}
$f>3$ and for some $l \in [2, f-2]$ and $m \in [0, l-2]$, \\
$(s'_{m}, s'_{m+1}, \dots, s'_{f-2}, s'_{f-1}) = ([1, p-1], 0, \dots, 0, \in [0, p-2])$;\\[0.15cm]
$(s''_{m}, s''_{m+1}, \dots, s''_{l-2}, s''_{l-1}, s''_l, \dots, s''_{f-2}, s''_{f-1}) = $\\
$(s'_{m}, 0, \dots, 0, 1, p-1, \dots, p-1, p-s'_{f-1}-2)$;\\[0.15cm]
$(s_{m}, s_{m+1}, \dots, s_{f-2}, s_{f-1}) = (s'_{m}-1, p-1, \dots, p-1, p-s'_{f-1}-2)$\\

\end{tabular}& \begin{tabular}{l}
\ref{item-unram-generic} and \ref{item-unram-5} \end{tabular} & \begin{tabular}{l}
None
\end{tabular}\\
\hline
\label{item-2-with-4} & 
\begin{tabular}{l}
$p>3$, $l = 0$, \\
$(s'_0, \dots, s'_{f-2}, s'_{f-1}) = (0, \dots, 0, \in [1, p-3])$;\\[0.15cm]
$(s''_0, \dots, s''_{f-2}, s''_{f-1}) = (p-1, \dots, p-1, p - s'_{f-1}-1)$;\\[0.15cm]
$(s_0, \dots, s_{f-2}, s_{f-1}) = (p-1, \dots, p-1, p-s'_{f-1}-3)$\\

\end{tabular} & \begin{tabular}{l} \ref{item-unram-3} and \ref{item-unram-7} \end{tabular} & \begin{tabular}{l} None \end{tabular}  \\
\hline
\label{item-2-with-5} & 
\begin{tabular}{l}
$f>2$ and for some $l \in [1, f-2]$,\\
$(s'_0, \dots, s'_{f-2}, s'_{f-1}) = (0, \dots, 0, \in [0, p-3])$;\\[0.15cm]
$(s''_0, \dots, s''_{l-2}, s''_{l-1}, s''_l, \dots, s''_{f-2}, s''_{f-1}) = $\\
$(0, \dots, 0, 1, p-1, \dots, p-1, p-s'_{f-1}-2)$;\\[0.15cm]
$(s_0, \dots, s_{f-2}, s_{f-1}) = (p-1, \dots, p-1, p-s'_{f-1}-3)$\\

\end{tabular} & \begin{tabular}{l} \ref{item-unram-3} and \ref{item-unram-5} \end{tabular} & \begin{tabular}{l} None \end{tabular}   \\
\hline
\label{item-3-with-5a} & 
\begin{tabular}{l}
$f>2$ and for some $l \in [1, f-2]$,\\
$(s'_0, \dots, s'_{l-1}, s'_l, \dots, s'_{f-3}, s'_{f-2}, s'_{f-1})=$\\
$(p-1, \dots, p-1, 0, \dots, 0, 1, p-1)$;\\[0.15cm]
$(s''_0, \dots, s''_{l-1}, s''_{l}, s''_{l+1}, \dots, s''_{f-1}) =(0, \dots, 0, p-2, 0, \dots, 0)$;\\[0.15cm]
$(s_0, \dots, s_{l-2}, s_{l-1}, s_{l}, \dots, s_{f-1}) =$\\
$(p-1, \dots, p-1, p-2, p-1, \dots p-1)$\\



\end{tabular} & \begin{tabular}{l} \ref{item-unram-5} and \ref{item-unram-6} \end{tabular} & \begin{tabular}{l} None \end{tabular}  \\
\hline
%
\label{item-3-with-6a} & 
\begin{tabular}{l}
$l=0$,\\
$(s'_0, \dots, s'_{f-3}, s'_{f-2}, s'_{f-1}) = (0, \dots, 0, 1, p-1)$;\\[0.15cm]
$(s''_0, s''_1, \dots, s''_{f-2}, s''_{f-1}) = (p-2, 0, \dots, 0, 0)$;\\[0.15cm]
$(s_0, \dots, s_{f-2}, s_{f-1}) = (p-1, \dots, p-1, p-2)$\\


\end{tabular}& \begin{tabular}{l} \ref{item-unram-4} and \ref{item-unram-6} \end{tabular} & \begin{tabular}{l} None \end{tabular}   \\
\hline
%

\label{item-4-with-5} & 
\begin{tabular}{l}
$f>2$ and for some $l \in [1, f-2]$\\

$(s'_{l-1}, s'_{l}, \dots, s'_{f-3}, s'_{f-2}, s'_{f-1}) =(\in [1, p-2], 0, \dots, 0, 1, p-1)$;\\[0.15cm]
$(s''_{l-1}, s''_{l}, \dots, s''_{f-3}, s''_{f-2}, s''_{f-1}) =$\\
$(s'_{l-1} +1, p-1, \dots, p-1, p-1, p-1)$;\\[0.15cm]
$(s_{l-1}, s_{l}, \dots, s_{f-3}, s_{f-2}, s_{f-1}) =$\\
$(s'_{l-1}-1, p-1, \dots, p-1, p-1, p-1)$\\

\end{tabular} & \begin{tabular}{l} \ref{item-unram-5} and \ref{item-unram-7} \end{tabular} & \begin{tabular}{l} None \end{tabular}  \\
\hline
\label{item-5-with-5} & 
\begin{tabular}{l}
$f>3$ and for some $l \in [2, f-2]$, $m \in [0, l-2]$\\
$(s'_{m}, s'_{m+1}, \dots, s'_{f-3}, s'_{f-2}, s'_{f-1}) =(\in [1, p-1], 0, \dots, 0, 1, p-1)$;\\[0.15cm]
$(s''_{m}, s''_{m+1}, \dots, s''_{l-2}, s''_{l-1}, s''_l, \dots, s''_{f-1}) =$\\
$(s'_m, 0, \dots, 0, 1, p-1, \dots, p-1)$;\\[0.15cm]
$(s_{m}, s_{m+1}, \dots, s_{f-1}) =(s'_{m}-1, p-1, \dots, p-1)$\\


\end{tabular} & \begin{tabular}{l} \ref{item-unram-5} and \ref{item-unram-5} \end{tabular} & \begin{tabular}{l} None \end{tabular}  \\
\hline
\label{item-5-with-6} & 
\begin{tabular}{l}
$f>2$, $l=0$,\\
$(s'_{0}, s'_1, \dots, s'_{f-3}, s'_{f-2}, s'_{f-1}) = (p-1, 0, \dots, 0, 1, p-1)$;\\[0.15cm]
$(s''_{0}, s''_1, \dots, s''_{f-2}, s''_{f-1}) = (p-1, 0, \dots, 0, 1)$;\\[0.15cm]
$(s_0, s_1, \dots, s_{f-1}) = (p-2, p-1, \dots, p-1)$\\

\end{tabular} & \begin{tabular}{l} \ref{item-unram-5} and \ref{item-unram-4} \end{tabular} & \begin{tabular}{l} None \end{tabular}  \\
\hline
\end{tabular}}
\caption{When $f>1$ and $e=1$, pairs of Serre weights $\tss$ and $\tsss$ with a type II intersection witnessed by a pair of $I_K$-characters with highest weight $\ts$ are precisely those for which, after exchanging $\tss$ and $\tsss$ if necessary and translating all indices by some fixed element in $\Z/f\Z$ if necessary, $\mathbf{s}'$, $\mathbf{s}''$ and $\bf{s}$ are as in the rows of this table and $\bf{t}'$, $\bf{t}''$ and $\bf{t}$ satisfy \Cref{type-1-case-1-det-twist,det-twist-type2-unram}.
The column `\Cref{table:unram-type-1} rows' indicates respectively the row numbers in \Cref{table:unram-type-1} that $(\mathbf{s}, \mathbf{s}')$, and after further translating indices by adding $f-1-l \in \Z/f\Z$, $(\mathbf{s}, \mathbf{s}'')$ satisfy. The column `Ext criterion' indicates which, if any, of the criteria for $\Ext^{1}_{\GL_2(\cO_K)}(\tss, \tsss)$ to be non-zero hold.}  \label{table:unram-type-2}
\end{threeparttable}
\end{table}
\setcounter{rownumber}{0}

 \begin{center}
 \begin{table}[ht]
 \begin{threeparttable}
 \resizebox{\textwidth}{!}{%
\begin{tabular}{ |N|l| }
\hline
\multicolumn{1}{|c|}{} & \multicolumn{1}{c|}{$\mathbf{s}', \mathbf{s}'', \bf{s}$}\\
\hline
\label{ramified-a-meets-a-i} & 
\begin{tabular}{l}
$f>3$ and for some $l \in [1,f-3], r \in [l+1, f-2], m \in [0, l-1]$,\\
$(s'_{m}, s'_{m+1}, \dots, s'_{l-1}, s'_{l}) = (\in [0, p-2], p-1, \dots, p-1, \in [0,p-2])$,\\
$(s'_{r}, s'_{r+1}, \dots, s'_{f-2}, s'_{f-1}) = (\in [1, p-1], 0, \dots, 0, \in [0, p-2])$;\\[0.15cm]
$(s''_{m}, s''_{m+1}, \dots, s''_{l-1}, s''_{l}) = (s'_m +1, 0, \dots, 0, p-s'_l - 2)$,\\
$(s''_{r}, s''_{r+1}, \dots, s''_{f-2}, s''_{f-1}) = (s'_r - 1, p-1, \dots, p-1, p-s'_{f-1}-2)$;\\[0.15cm]
$(s_{m}, s_{m+1}, \dots, s_{l-1}, s_{l}) = (s'_m, p-1, \dots, p-1, s'_{l})$,\\
$(s_{r}, s_{r+1}, \dots, s_{f-2}, s_{f-1}) = (s'_r - 1, p-1, \dots, p-1, p-s'_{f-1}-2)$\\


\end{tabular}\\
\hline
\label{ramified-a-meets-a-ii} & 
\begin{tabular}{l}
$f>2$ and for some $l \in [0,f-3], r \in [l+1, f-2]$,\\
$(s'_{0}, \dots, s'_{l-1}, s'_{l}) = (p-1, \dots, p-1, \in [0,p-2])$,\\
$(s'_{r}, s'_{r+1}, \dots, s'_{f-2}, s'_{f-1}) = (\in [1, p-1], 0, \dots, 0, \in [0, p-2])$;\\[0.15cm]
$(s''_{0}, \dots, s''_{l-1}, s''_{l}) = (0, \dots, 0, p-s'_l - 2)$,\\
$(s''_{r}, s''_{r+1}, \dots, s''_{f-2}, s''_{f-1}) = (s'_r - 1, p-1, \dots, p-1, p-s'_{f-1}-1)$;\\[0.15cm]
$(s_{0}, \dots, s_{l-1}, s_{l}) = (p-1, \dots, p-1, s'_{l})$,\\
$(s_{r}, s_{r+1}, \dots, s_{f-2}, s_{f-1}) = (s'_r - 1, p-1, \dots, p-1, p-s'_{f-1}-2)$\\


\end{tabular} \\
\hline
\label{ramified-a-meets-a-iv} & 
\begin{tabular}{l}
For some $l \in [0,f-2]$,\\
$(s'_{0}, \dots, s'_{l-1}, s'_{l}, s'_{l+1}, \dots, s'_{f-2}, s'_{f-1}) = (p-1, \dots, p-1, \in [1, p-1], 0, \dots, 0, \in [0, p-2])$;\\[0.15cm]
$(s''_{0}, \dots, s''_{l-1}, s''_{l}, s''_{l+1}, \dots, s''_{f-2}, s''_{f-1}) = (0, \dots, 0, p-s'_l - 1, p-1, \dots, p-1, p-s'_{f-1}-1)$;\\[0.15cm]
$(s_{0}, \dots, s_{l-1}, s_{l}, s_{l+1}, \dots, s_{f-2}, s_{f-1}) = (p-1, \dots, p-1, s'_l - 1, p-1, \dots, p-1, p-s'_{f-1}-2)$\\
\end{tabular}   \\
\hline
\end{tabular}}
\caption{Let $f>1$, $e>1$. Consider all pairs $(\tss, \tsss)$ of non-isomorphic, non-Steinberg Serre weights so that that for a pair of $I_K$-characters $(\chi_1, \chi_2)$ with highest weight $\ts$, $\tss$ as well as $\tsss$ satisfy \Cref{ramified-codim-1}\ref{ramified-codim-1-a}. After exchanging $\tss$ and $\tsss$ if necessary and translating all indices by a fixed element of $\Z/f\Z$ if necessary, the triples $\bf{s}'$, $\bf{s}''$ and $\bf{s}$ are precisely those as in the rows of this table. As seen in \Cref{summary-ramified-aa}, none of the rows correspond to pairs $(\tss, \tsss)$ with type II intersections between them.}\label{table:ram-type2-aa-candidates}

\end{threeparttable}
\end{table}
\end{center}
\setcounter{rownumber}{0}

 \begin{center}
 \begin{table}[hp]
 \begin{threeparttable}
 \resizebox{\textwidth}{!}{%
\begin{tabular}{ |N|l|l|l| }
\hline
\multicolumn{1}{|c|}{} & \multicolumn{1}{c|}{$\mathbf{s}', \mathbf{s}'', \bf{s}$} &  \multicolumn{1}{c|}{\Cref{table:ram-type-1b} rows} & \multicolumn{1}{c|}{Ext criterion}\\
\hline
\label{ramified-bb-i} & 
\begin{tabular}{l}
For some $l \in [0,f-2]$, \\
$s'_{l} \in [0, p-3], s'_{f-1} \in [2, p-1]$;\\[0.15cm]
$s''_{l} = s'_l + 2, s''_{f-1} = s'_{f-1}-2$;\\[0.15cm]
$s_l = s'_l, s_{f-1} = s'_{f-1}-2$\\
\end{tabular} & \begin{tabular}{l}
\ref{ram-type-1b-i} and \ref{ram-type-1b-i} 
\end{tabular} & \begin{tabular}{l}
\end{tabular}\\
\hline
\label{ramified-bb-ii} & 
\begin{tabular}{l}
$f>2$ and for some $l \in [0, f-3], r\in [l+1, f-2]$,\\
$s'_{l} \in [0, p-3]$, $(s'_r, s'_{r+1}, \dots, s'_{f-2}, s'_{f-1}) = (\in [1, p-1], 0, \dots, 0, 1)$;\\[0.15cm]
$s''_{l} =s'_l +2$, $(s''_r, s''_{r+1}, \dots, s''_{f-1}) = (s'_r - 1, p-1, \dots, p-1)$;\\[0.15cm]
$s_{l} =s'_l$, $(s_r, s_{r+1}, s_{f-1}) = (s'_r - 1, p-1, \dots, p-1)$;\\



\end{tabular} & \begin{tabular}{l}
\ref{ram-type-1b-ii} and \ref{ram-type-1b-i} 
\end{tabular} & \begin{tabular}{l}
\end{tabular}\\
\hline
*\label{ramified-bb-iii} & 
\begin{tabular}{l}
For some $l \in [0,f-2]$,\\
$(s'_{l}, s'_{l+1}, \dots, s'_{f-2}, s'_{f-1}) = (\in [1, p-2], 0, \dots, 0, 1)$;\\[0.15cm]
$(s''_{l}, s''_{l+1}, \dots,  s''_{f-1}) = (s'_l + 1, p-1, \dots, p-1)$;\\[0.15cm]
$(s_{l}, s_{l+1}, \dots, s_{f-2}, s_{f-1}) = (s'_l - 1, p-1, \dots, p-1, p-1)$\\
\end{tabular}  & \begin{tabular}{l}
\ref{ram-type-1b-ii} and \ref{ram-type-1b-i} 
\end{tabular} & \begin{tabular}{l} None
\end{tabular} \\
\hline
\label{ramified-bb-iv} & 
\begin{tabular}{l}
$f>3$ and for some $l \in [1,f-3], r \in [l+1, f-2], m \in [0, l-1]$,\\
$(s'_m, s'_{m+1}, \dots, s'_{l-1}, s'_l) = (\in [0, p-2], p-1, \dots, p-1, p-1)$,\\
$(s'_{r}, s'_{r+1}, \dots, s'_{f-2}, s'_{f-1}) = (\in [1, p-1], 0, \dots, 0, 1)$;\\[0.15cm]
$(s''_m, s''_{m+1}, \dots, s''_{l-1}, s''_l) = (s'_m + 1, 0, \dots, 0, 1)$,\\
$(s''_{r}, s''_{r+1}, \dots, s''_{f-2}, s''_{f-1}) = (s'_r - 1, p-1, \dots, p-1, p-1)$;\\[0.15cm]
$(s_m, s_{m+1}, \dots, s_{l-1}, s_l) = (s'_m, p-1, \dots, p-1, p-1)$,\\
$(s_{r}, s_{r+1}, \dots, s_{f-2}, s_{f-1}) = (s'_l - 1, p-1, \dots, p-1, p-1)$\\
\end{tabular}  & \begin{tabular}{l}
\ref{ram-type-1b-ii} and \ref{ram-type-1b-ii} 
\end{tabular} & \begin{tabular}{l}
\end{tabular} \\
\hline
*\label{ramified-bb-v} & 
\begin{tabular}{l}
$f>2$ and for some $l \in [1,f-2], m \in [0, l-1]$,\\
$(s'_m, s'_{m+1}, \dots, s'_{f-2}, s'_{f-1}) = (\in [1, p-1], 0, \dots, 0, 1)$,\\[0.15cm]
$(s''_m, s''_{m+1}, \dots, s''_{l-1}, s''_l, s''_{l+1}, \dots, s''_{f-2}, s''_{f-1}) =$\\
$ (s'_m, 0, \dots, 0, 1, p-1, \dots, p-1, p-1)$;\\[0.15cm]
$(s_m, s_{m+1}, \dots, s_{f-1}) = (s'_m-1, p-1, \dots, p-1)$\\
\end{tabular} & \begin{tabular}{l}
\ref{ram-type-1b-ii} and \ref{ram-type-1b-ii} 
\end{tabular}  & \begin{tabular}{l} None
\end{tabular} \\
\hline
*\label{ramified-bb-vi} & 
\begin{tabular}{l}
$l \in [0,f-2]$,\\
$(s'_0, \dots, s'_{f-2}, s'_{f-1}) = (0, \dots, 0, 1)$,\\[0.15cm]
$(s''_0, \dots, s''_{l-1}, s''_l, s''_{l+1}, \dots, s''_{f-1}) = (0, \dots, 0, 1, p-1, \dots, p-1)$;\\[0.15cm]
$(s_0, \dots, s_{f-2}, s_{f-1}) = (p-1, \dots, p-1, p-2)$\\
\end{tabular}  & \begin{tabular}{l}
\ref{ram-type-1b-iii} and \ref{ram-type-1b-ii} 
\end{tabular}  & \begin{tabular}{l} None
\end{tabular}\\
\hline
\end{tabular}}
\caption{Let $f>1$, $e>1$. Consider all pairs $(\tss, \tsss)$ of non-isomorphic, non-Steinberg Serre weights so that that for a pair of $I_K$-characters $(\chi_1, \chi_2)$ with highest weight $\ts$, $\tss$ as well as $\tsss$ satisfy \Cref{ramified-codim-1}\ref{ramified-codim-1-b}. After exchanging $\tss$ and $\tsss$ if necessary and translating all indices by a fixed element of $\Z/f\Z$ if necessary, the triples $\bf{s}'$, $\bf{s}''$ and $\bf{s}$ are precisely those as in the rows of this table. If there exists a type II intersection between $\tss$ and $\tsss$, then the corresponding row number is marked with an asterisk. The column `\Cref{table:ram-type-1b} rows' indicates respectively the row numbers in \Cref{table:ram-type-1b} that $(\mathbf{s}, \mathbf{s}')$, and after further translating indices by adding $f-1-l \in \Z/f\Z$, $(\mathbf{s}, \mathbf{s}'')$ satisfy. The column `Ext criterion' indicates which, if any, of the criteria for $\Ext^{1}_{\GL_2(\cO_K)}(\tss, \tsss)$ to be non-zero hold in the rows with an asterisk. }\label{table:ram-type2-bb-candidates}
\end{threeparttable}
\end{table}
\end{center}
\setcounter{rownumber}{0}

 \begin{center}
 \begin{table}[hp]
 \begin{threeparttable}
 \resizebox{\textwidth}{!}{%
\begin{tabular}{ |N|l|l|l| }
\hline
\multicolumn{1}{|c|}{} & \multicolumn{1}{c|}{$\mathbf{s}', \mathbf{s}'', \bf{s}$} &  \multicolumn{1}{c|}{\begin{tabular}{l}
     Rows in \\
\Cref{table:unram-type-1,table:ram-type-1b} 
\end{tabular}} & \multicolumn{1}{c|}{\begin{tabular}{l}
     Ext \\
     criterion
\end{tabular}}\\
\hline
\label{ramified-ab-i} & 
\begin{tabular}{l}
$f>2$ and for some $l \in [0,f-3]$, $r \in [l+1, f-2]$, \\
$s'_{l} \in [0, p-3]$, $(s'_r, s'_{r+1}, \dots, s'_{f-2}, s'_{f-1}) = ([1, p-1], 0, \dots, 0, [0, p-2])$;\\[0.15cm]
$s''_{l} = s'_l + 2$, \\
$(s''_r, s''_{r+1}, \dots, s''_{f-2}, s''_{f-1}) = (s'_r - 1, p-1, \dots, p-1, p - s'_{f-1} -2)$;\\[0.15cm]
$s_{l} = s'_l$, $(s_r, s_{r+1}, \dots, s_{f-2}, s_{f-1}) = (s'_r - 1, p-1, \dots, p-1, p - s'_{f-1} -2)$\\
\end{tabular} & \begin{tabular}{l}
\ref{item-unram-generic} and \ref{ram-type-1b-i} 
\end{tabular} & \begin{tabular}{l} \end{tabular}\\
\hline
*\label{ramified-ab-ii} & 
\begin{tabular}{l}
For some $l \in [0,f-2]$,\\
$(s'_l, s'_{l+1}, \dots, s'_{f-2}, s'_{f-1}) = ([1, p-2], 0, \dots, 0, [0, p-2])$;\\[0.15cm]
$(s''_l, s''_{l+1}, \dots, s''_{f-2}, s''_{f-1}) = (s'_l + 1, p-1, \dots, p-1, p - s'_{f-1} -2)$;\\[0.15cm]
$(s_l, s_{l+1}, \dots, s_{f-2}, s_{f-1}) = (s'_l - 1, p-1, \dots, p-1, p - s'_{f-1} -2)$\\
\end{tabular} & \begin{tabular}{l}
\ref{item-unram-generic} and \ref{ram-type-1b-i} 
\end{tabular}& \begin{tabular}{l} Iff $l =f-2$, \\
\Cref{ext-gamma}\ref{ext-k-i}
\end{tabular}\\
\hline
\label{ramified-ab-iii} & 
\begin{tabular}{l}
For $l=f-1$ and some $r \in [0, f-2]$,\\
$(s'_r, s'_{r+1}, \dots, s'_{f-2}, s'_{f-1}) = ([1, p-1], 0, \dots, 0, [1, p-2])$;\\[0.15cm]
$(s''_r, s''_{r+1}, \dots, s''_{f-2}, s''_{f-1}) = (s'_r - 1, p-1, \dots, p-1, p - s'_{f-1})$;\\[0.15cm]
$(s_r, s_{r+1}, \dots, s_{f-2}, s_{f-1}) = (s'_r - 1, p-1, \dots, p-1, p - s'_{f-1} -2)$\\
\end{tabular} & \begin{tabular}{l}
\ref{item-unram-generic} and \ref{ram-type-1b-i} 
\end{tabular}& \begin{tabular}{l} \end{tabular}\\
\hline
\label{ramified-ab-iv} & 
\begin{tabular}{l}
$f>3$ and for some $l \in [1,f-3], m \in [0, l-1], r\in [l+1, f-2]$,\\
$(s'_m, s'_{m+1}, \dots, s'_l) = (\in [0, p-2], p-1, \dots, p-1)$,\\
$(s'_r, s'_{r+1}, \dots, s'_{f-2}, s'_{f-1}) = ([1, p-1], 0, \dots, 0, [0, p-2])$;\\[0.15cm]
$(s''_m, s''_{m+1}, \dots, s''_{l-1}, s''_l) = (s'_m + 1, 0, \dots, 0, 1)$,\\
$(s''_r, s''_{r+1}, \dots, s''_{f-2}, s''_{f-1}) = (s'_r -1, p-1, \dots, p-1, p - s'_{f-1} -2)$;\\[0.15cm]
$(s_m, s_{m+1}, \dots, s_l) = (s'_m, p-1, \dots, p-1)$,\\
$(s_r, s_{r+1}, \dots, s_{f-2}, s_{f-1}) = (s'_r - 1, p-1, \dots, p-1, p - s'_{f-1} -2)$\\
\end{tabular} & \begin{tabular}{l}
\ref{item-unram-generic} and \ref{ram-type-1b-ii} 
\end{tabular}& \begin{tabular}{l} \end{tabular}\\
\hline
\label{ramified-ab-v} & 
\begin{tabular}{l}
$f>2$ and for some $l \in [0,f-3], r\in [l+1, f-2]$,\\
$(s'_0, \dots, s'_l) = (p-1, \dots, p-1)$,\\
$(s'_r, s'_{r+1}, \dots, s'_{f-2}, s'_{f-1}) = ([1, p-1], 0, \dots, 0, [0, p-2])$;\\[0.15cm]
$(s''_0, \dots, s''_{l-1}, s''_l) = (0, \dots, 0, 1)$,\\
$(s''_r, s''_{r+1}, \dots, s''_{f-2}, s''_{f-1}) = (s'_r -1, p-1, \dots, p-1, p - s'_{f-1} -1)$;\\[0.15cm]
$(s_0, \dots, s_l) = (p-1, \dots, p-1)$,\\
$(s_r, s_{r+1}, \dots, s_{f-2}, s_{f-1}) = (s'_r - 1, p-1, \dots, p-1, p - s'_{f-1} -2)$\\
\end{tabular} & \begin{tabular}{l}
\ref{item-unram-generic} and \ref{ram-type-1b-ii} 
\end{tabular}& \begin{tabular}{l} \end{tabular}\\
\hline
*\label{ramified-ab-vi} & 
\begin{tabular}{l}
$f>2$ and for some $l \in [1,f-2], m\in [0, l-1]$,\\
$(s'_m, s'_{m+1}, \dots, s'_{f-2}, s'_{f-1}) = ([1, p-1], 0, \dots, 0, [0, p-2])$;\\[0.15cm]
$(s''_m, s''_{m+1}, \dots, s''_{l-1}, s''_l, s''_{l+1}, \dots, s''_{f-2}, s''_{f-1})=$\\
$(s'_m, 0, \dots, 0, 1, p-1, \dots, p-1, p-s'_{f-1}-2)$;\\[0.15cm]
$(s_m, s_{m+1}, \dots, s_{f-2}, s_{f-1}) = (s'_m - 1, p-1, \dots, p-1, p - s'_{f-1} -2)$\\
\end{tabular} & \begin{tabular}{l}
\ref{item-unram-generic} and \ref{ram-type-1b-ii} 
\end{tabular}& \begin{tabular}{l}Iff $l =f-2$, \\
\Cref{ext-gamma}\ref{ext-k-i} \end{tabular}\\
\hline
*\label{ramified-ab-vii} & 
\begin{tabular}{l}
$l=f-1$,\\
$(s'_0, \dots, s'_{f-2}, s'_{f-1}) = (0, \dots, 0, [1, p-3])$;\\[0.15cm]
$(s''_0, \dots, s''_{f-2}, s''_{f-1})=(p-1, \dots, p-1, p-s'_{f-1}-1)$;\\[0.15cm]
$(s_0, \dots, s_{f-2}, s_{f-1}) = (p-1, \dots, p-1, p-s'_{f-1}-3)$\\
\end{tabular} & \begin{tabular}{l}
\ref{item-unram-3} and \ref{ram-type-1b-i} 
\end{tabular}& \begin{tabular}{l} None \end{tabular}\\
\hline
*\label{ramified-ab-viii} & 
\begin{tabular}{l}
$l\in[0,f-2]$,\\
$(s'_0, \dots, s'_{f-2}, s'_{f-1}) = (0, \dots, 0, [0, p-3])$;\\[0.15cm]
$(s''_0, \dots, s''_{l-1}, s''_l, s''_{l+1}, \dots, s''_{f-3}, s''_{f-2}, s''_{f-1})=$\\$(0, \dots, 0, 1, p-1, \dots, p-1, p-s'_{f-1}-2)$;\\[0.15cm]
$(s_0, \dots, s_{f-2}, s_{f-1}) = (p-1, \dots, p-1, p-s'_{f-1}-3)$\\
\end{tabular} & \begin{tabular}{l}
\ref{item-unram-3} and \ref{ram-type-1b-ii} 
\end{tabular}& \begin{tabular}{l} Iff $l =f-2$, \\
\Cref{ext-gamma}\ref{ext-k-i} \end{tabular}\\
\hline
*\label{ramified-ab-ix} & 
\begin{tabular}{l}
$f>2$ and $l\in[0,f-3]$,\\
$(s'_0, \dots, s'_{f-3}, s'_{f-2}, s'_{f-1}) = (0, \dots, 0, 0, p-2)$;\\[0.15cm]
$(s''_0, \dots, s''_{l-1}, s''_l, s''_{l+1}, \dots, s''_{f-3}, s''_{f-2}, s''_{f-1})=$\\
$(0, \dots, 0, 1, p-1, \dots, p-1, p-1, 0)$;\\[0.15cm]
$(s_0, \dots, s_{f-3}, s_{f-2}, s_{f-1}) = (p-1, \dots, p-1, p-2, p-1)$\\
\end{tabular} & \begin{tabular}{l}
\ref{item-unram-6} and \ref{ram-type-1b-ii} 
\end{tabular}& \begin{tabular}{l} None \end{tabular}\\
\hline
*\label{ramified-ab-x} & 
\begin{tabular}{l}
$l=f-1$, \\
$(s'_0, \dots, s'_{f-2}, s'_{f-1}) = (0, \dots, 0, p-2)$; \\[0.15cm]
$(s''_0, \dots, s''_{f-2}, s''_{f-1})=(p-1, \dots, p-1, 1)$;\\[0.15cm]
$(s_0, \dots, s_{f-3}, s_{f-2}, s_1) = (p-1, \dots, p-1, p-2, p-1)$\\
\end{tabular} & \begin{tabular}{l}
\ref{item-unram-6} and \ref{ram-type-1b-ii} 
\end{tabular}& \begin{tabular}{l} None \end{tabular}\\
\hline
*\label{ramified-ab-xi} & 
\begin{tabular}{l}
$l=f-2$,\\
$(s'_0, \dots, s'_{f-3}, s'_{f-2}, s'_{f-1}) = (0, \dots, 0, 0, p-2)$;\\[0.15cm]
$(s''_0, \dots, s''_{f-3}, s''_{f-2}, s''_{f-1})=(0, \dots, 0, 1, 0)$;\\[0.15cm]
$(s_0, \dots, s_{f-3}, s_{f-2}, s_{f-1}) = (p-1, \dots, p-1, p-2, p-1)$\\
\end{tabular} & \begin{tabular}{l}
\ref{item-unram-6} and \ref{ram-type-1b-iii} 
\end{tabular}& \begin{tabular}{l} Iff $l =f-2$, \\
\Cref{ext-gamma}\ref{ext-k-i} \end{tabular}\\
\hline
\end{tabular}}
\caption{Let $f>1$, $e>1$. Consider all pairs $(\tss, \tsss)$ of non-isomorphic, non-Steinberg Serre weights so that for a pair of $I_K$-characters $(\chi_1, \chi_2)$ with highest weight $\ts$, $\tss$ satisfies \Cref{ramified-codim-1}\ref{ramified-codim-1-a}, while $\tsss$ satisfies \Cref{ramified-codim-1}\ref{ramified-codim-1-b}. After translating all indices by a fixed element of $\Z/f\Z$ if necessary, the triples $\bf{s}'$, $\bf{s}''$ and $\bf{s}$ are precisely those as in the rows of this table, with the rows corresponding to a type II intersection between $\tss$ and $\tsss$ marked with an asterisk. The column `Rows in \Cref{table:unram-type-1,table:ram-type-1b}' indicates respectively the row numbers in \Cref{table:unram-type-1,table:ram-type-1b} that $(\mathbf{s}, \mathbf{s}')$, and after further translating indices by adding $f-1-l \in \Z/f\Z$, $(\mathbf{s}, \mathbf{s}'')$ satisfy. The column `Ext criterion' indicates which, if any, of the criteria for $\Ext^{1}_{\GL_2(\cO_K)}(\tss, \tsss)$ to be non-zero hold in the rows with an asterisk. }\label{table:ram-type2-ab-candidates}
\end{threeparttable}
\end{table}
\end{center}
\setcounter{rownumber}{0}

\clearpage
\bibliographystyle{math}
\bibliography{my}
\end{document}